\definecolor{Red}{cmyk}{0,1,1,0}
\newcommand{\spann}{span}
\newcommand{\inner}[2]{\ensuremath{\left\langle{#1,#2}\right\rangle}}
\newcommand*\circled[1]{\tikz[baseline=(char.base)]{
            \node[shape=circle,draw,inner sep=2pt] (char) {#1};}}
\newcommand{\Var}[0]{\text{Var}}
\newcommand\redsout{\bgroup\markoverwith{\textcolor{red}{\rule[0.5ex]{2pt}{0.4pt}}}\ULon}
\newtheorem{theorem}           {Theorem}
\theoremstyle{definition}
\newtheorem{remark}[theorem]{Remark}
\newtheorem*{theorem*}{Theorem}
\newtheorem*{conjecture*}   {Conjecture}
\newtheorem*{corollary*}   {Corollary}
  \newtheorem{lemma}[theorem]              {Lemma}
  \newtheorem*{lemma*}          {Lemma}
    \newtheorem*{claim*}          {Claim}
  \newtheorem{definition}[theorem]         {Definition}
  \newtheorem{corollary}[theorem]          {Corollary}
  \newtheorem{proposition}[theorem]      {Proposition}
  \theoremstyle{definition}
  \newtheorem{example}[theorem]          {Example}
\newcommand{\g}{\mathbb{G}}
\DeclareMathOperator{\supp}{supp}
\begin{document}


\voffset=-1.5truecm\hsize=16.5truecm    \vsize=24.truecm
\baselineskip=14pt plus0.1pt minus0.1pt \parindent=12pt
\lineskip=4pt\lineskiplimit=0.1pt      \parskip=0.1pt plus1pt

\def\ds{\displaystyle}\def\st{\scriptstyle}\def\sst{\scriptscriptstyle}



\global\newcount\numsec\global\newcount\numfor
\gdef\profonditastruttura{\dp\strutbox}
\def\senondefinito#1{\expandafter\ifx\csname#1\endcsname\relax}
\def\SIA #1,#2,#3 {\senondefinito{#1#2}
\expandafter\xdef\csname #1#2\endcsname{#3} \else
\write16{???? il simbolo #2 e' gia' stato definito !!!!} \fi}
\def\etichetta(#1){(\veroparagrafo.\veraformula)
\SIA e,#1,(\veroparagrafo.\veraformula)
 \global\advance\numfor by 1
 \write16{ EQ \equ(#1) ha simbolo #1 }}
\def\etichettaa(#1){(A\veroparagrafo.\veraformula)
 \SIA e,#1,(A\veroparagrafo.\veraformula)
 \global\advance\numfor by 1\write16{ EQ \equ(#1) ha simbolo #1 }}
\def\BOZZA{\def\alato(##1){
 {\vtop to \profonditastruttura{\baselineskip
 \profonditastruttura\vss
 \rlap{\kern-\hsize\kern-1.2truecm{$\scriptstyle##1$}}}}}}
\def\alato(#1){}
\def\veroparagrafo{\number\numsec}\def\veraformula{\number\numfor}
\def\Eq(#1){\eqno{\etichetta(#1)\alato(#1)}}
\def\eq(#1){\etichetta(#1)\alato(#1)}
\def\Eqa(#1){\eqno{\etichettaa(#1)\alato(#1)}}
\def\eqa(#1){\etichettaa(#1)\alato(#1)}
\def\equ(#1){\senondefinito{e#1}$\clubsuit$#1\else\csname e#1\endcsname\fi}
\let\EQ=\Eq


\def\\{\noindent}
\let\io=\infty

\def\VU{{\mathbb{V}}}
\def\EE{{\mathbb{E}}}
\def\GI{{\mathbb{G}}}
\def\TT{{\mathbb{T}}}
\def\C{\mathbb{C}}
\def\LL{{\cal L}}
\def\RR{{\cal R}}
\def\SS{{\cal S}}
\def\NN{{\cal N}}
\def\HH{{\cal H}}
\def\GG{{\cal G}}
\def\PP{{\cal P}}
\def\AA{{\cal A}}
\def\BB{{\cal B}}
\def\FF{{\cal F}}
\def\vv{\vskip.2cm}
\def\gt{{\tilde\g}}
\def\E{{\mathcal E} }
\def\I{{\rm I}}

\def\cal{\mathcal}

\def\tende#1{\vtop{\ialign{##\crcr\rightarrowfill\crcr
              \noalign{\kern-1pt\nointerlineskip}
              \hskip3.pt${\scriptstyle #1}$\hskip3.pt\crcr}}}
\def\otto{{\kern-1.truept\leftarrow\kern-5.truept\to\kern-1.truept}}
\def\arm{{}}
\font\bigfnt=cmbx10 scaled\magstep1

\newcommand{\card}[1]{\left|#1\right|}
\newcommand{\und}[1]{\underline{#1}}
\newcommand{\Dom}[0]{\text{Dom}}
\def\1{\rlap{\mbox{\small\rm 1}}\kern.15em 1}
\def\ind#1{\1_{\{#1\}}}
\def\bydef{:=}
\def\defby{=:}
\def\buildd#1#2{\mathrel{\mathop{\kern 0pt#1}\limits_{#2}}}
\def\card#1{\left|#1\right|}
\def\proofof#1{\noindent{\bf Proof of #1. }}
\def\trp{\mathbb{T}}
\def\trt{\mathcal{T}}

\def\bfz{\boldsymbol z}
\def\bfa{\boldsymbol a}
\def\bfalpha{\boldsymbol\alpha}
\def\bfmu{\boldsymbol \mu}
\def\bfmust{\bfT^\infty(\bfmu)}
\def\bfmupr{\boldsymbol {\widetilde\mu}}
\def\bfrho{\boldsymbol \rho}
\def\bfrhost{\boldsymbol \rho^*}
\def\bfrhopr{\boldsymbol {\widetilde\rho}}
\def\bfT{{\boldsymbol T}_{\!\!\bfrho}}
\def\bfR{\boldsymbol R}
\def\bfvarphi{\boldsymbol \varphi}
\def\bfvarphist{\boldsymbol \varphi^*}
\def\bfPi{\boldsymbol \Pi}
\def\bfzero{\boldsymbol 0}
\def\bfW{\boldsymbol W}
\def\formal{\stackrel{\rm F}{=}{}}
\def\eee{{\rm e}}
\def\nnn{\mathcal N}
\def\nst{\nnn^*}
\def\Var{\text{Var}}

\thispagestyle{empty}

\begin{center}
{\LARGE Extendable Shift Maps and Weighted Endomorphisms on Generalized Countable Markov Shifts}
\vskip.5cm
Rodrigo Bissacot$^{1}$, Iv\'an Diaz-Granados$^{1}$, and Thiago Raszeja$^{2,3}$
\vskip.3cm
\begin{footnotesize}
$^{1}$Institute of Mathematics and Statistics (IME-USP), University of S\~{a}o Paulo, Brazil\\
$^{2}$ Departamento de Matem\'atica, Pontif\'icia Universidade Cat\'olica do
Rio de Janeiro (PUC-Rio), Brazil \\
$^{3}$ Institute of Physics (IFUSP), University of S\~{a}o Paulo, Brazil
\end{footnotesize}
\vskip.1cm
\begin{scriptsize}
emails: rodrigo.bissacot@gmail.com; diazgranados.ivanf@gmail.com; tcraszeja@gmail.com
\end{scriptsize}

\end{center}

\def\be{\begin{equation}}
\def\ee{\end{equation}}

\vskip1.0cm
\begin{quote}
{\small

\textbf{Abstract.} \begin{footnotesize} We obtain an operator algebraic characterization for when we can continuously extend the shift map from a standard countable Markov shift $\Sigma_A$ to its respective generalized countable Markov shift $X_A$ (a compactification of $\Sigma_A$). When the shift map is continuously extendable, we obtain explicit formulas for the spectral radius of weighted endomorphisms $a\alpha$, where $\alpha$ is dual to the shift map and conjugated to $\Theta(f)=f \circ \sigma$ on $C(X_A)$, extending a theorem of Kwa\'sniewski and Lebedev from finite to countable alphabets.
\end{footnotesize}

}
\end{quote}
\tableofcontents
\numsec=2\numfor=1
\section*{Introduction}
\vv
\noindent

The Cuntz algebras \cite{Cuntz1977} were the first concrete example of C$^*$-algebras which are infinite, simple, and separable, whose existence was proved by J. Dixmier \cite{Dixmier1963} more than a decade earlier. Few years later, J. Cuntz and W. Krieger extended this class of C$^*$-algebras, the so-called Cuntz-Krieger algebras $\mathcal{O}_A$, and provided the connection between them and the Markov shift spaces $\Sigma_A$ \cite{CK1980}, where $A$ is the transition matrix, and whose uniqueness of $\mathcal{O}_A$ holds under suitable assumptions on $A$, for example, when $A$ is irreducible $A$ and is not a permutation matrix. So their results are very general from dynamic systems point of view when the alphabet is finite (compact $\Sigma_A$). However, the only case for infinite alphabets at this point was the full shift in \cite{Cuntz1977}, and the generalization arbitrary matrices associated and infinite alphabets turned into an open problem for almost two decades. In late 90's, A. Kumjian, D. Pask, I. Raeburn, and J. Renault \cite{KumjPaskRaeb1998, KumjPaskRaebRen1997} extended these algebras for row-finite matrices (embracing locally compact $\Sigma_A$ when $A$ is irreducible) via groupoid C$^*$-algebra theory \cite{Renault1980}, and in 1999 the row-finiteness condition was finally removed by R. Exel and M. Laca \cite{EL1999}. Later, in 2000, they computed the K-theory of these C$^*$-algebras \cite{EL2000}, while in the same year J. Renault constructed their respective groupoid C$^*$-algebra theory \cite{Renault1999}.

In their work, R. Exel and M. Laca \cite{EL1999}, among other results, constructed a topological space that is a local compactification (in many cases a compactification) of the standard Markov shift $\Sigma_A$, whose complement is a set of finite words, sometimes the same word with different labels, a sort of multiplicity, this space was named later as \emph{generalized Markov shift} $X_A$ \cite{BEFR2018, BEFR2022}. As a topological embedding, not only the infinite sequences $\Sigma_A$ are dense as in the notion of compactification, but also its complement, when it is not empty, it is a dense subset $Y_A \subseteq X_A$. The space $X_A$ answers the following question \cite{EL1999}: Cuntz-Krieger algebras contain the commutative C$^*$-subalgebra $\mathcal{D}_A \simeq C(\Sigma_A)$ of complex continuous functions on $\Sigma_A$, and since a similar C$^*$-subalgebra $\mathcal{D}_A$ generated by the analogous pairwise commuting projections is similarly found in $\mathcal{O}_A$ for infinite alphabets, what it should be, via the Gelfand transformation, the space $X$ which is the spectrum of $\mathcal{D}_A$? The answer is $X  = X_A$. Note that, as expected, when the alphabet is infinite but $\Sigma_A$ is locally compact, $\Sigma_A$ is homeomorphic to $X_A$ and we are in a similar situation as in the compact case.

Now, returning to the space $\Sigma_A$, we recall that this object already has a rich history as an important concept in dynamical systems, ergodic theory, operator algebras, and thermodynamic formalism. In particular, we can mention the last thirty years of contributions of different groups on the thermodynamic formalism for standard countable Markov shifts \cite{BuzSa2003,DenUr1991,FieFie95,FieFieYu2002,FreireVargas2018,Gurevich1969topological,Gurevich1970shift,Gurevich1984variational,GurSav1998thermodynamical,Iommi2021,MaulUr2001,MaulUr2003graph,Pesin2014work,sarig:09, sarig1999thermodynamic,Sarig2001PT,Sarig2001_Null, Sarig2015,Shwartz2019}. Then, a natural question arose concerning the construction of the thermodynamic formalism on $X_A$, an investigation that two of the authors jointly with R. Exel and R. Frausino started on the last years \cite{BEFR2018,BEFR2022,Raszeja2020}, and recently developed even further by Y. Michimoto, Y. Nakano, H. Toyokawa and K. Yoshida \cite{MichiNakaHisaYoshi2022}. We observe that the quantum counterpart of this thermodynamic formalism, concerning Kubo-Martin-Schwinger (KMS) states, started earlier by R. Exel and M. Laca \cite{EL2003}.

At this point, on each investigation, the authors have to choose concerning the shift map on $X_A$, if the shift map is partially defined or defined in the entire space $X_A$. If so, studying whether the extension of $\sigma$ from $\Sigma_A$ to $X_A$ is continuous or not is a delicate issue since $\sigma$ is dense in $X_A$. For important examples of countable Markov shifts, such as the renewal shift, there exists a continuous extension. One of the contributions of the present article is to characterize, in an operator algebraic way, when a continuous extension does exist and give a class of examples when we can continuously extend the shift map to $X_A$.

On the other hand, the study of the spectrum of weighted endomorphisms began practically at the same time as the discovery of the Cuntz algebras, with the work of H. Kamowitz \cite{Kamowitz1978} on the disc algebra. One year later, he continued the investigation \cite{Kamowitz1979}, by characterizing compact operators that belong to a class of weighted endomorphisms in the algebra $C(D)$, where $D$ is the complex unit disk, and identified the spectrum of such operators \cite{Kamowitz1979}. In the same year, A. K. Kitover \cite{Kit1979} studied the spectrum of a class of weighted automorphisms in a semisimple Banach algebra. In 1981, two years later, H. Kamowitz in \cite{Kamowitz1981} generalized his previous work on weighted endomorphisms in the algebra $C(X)$, where $X$ is a compact Hausdorff space. Investigations on the spectrum of weighted endomorphisms include recent works on the spectrum \cite{GaoZhou2020} and the essential spectrum \cite{DongGaoZhou2022} of these objects. Moreover, A. Antonevich and A. Lebedev unified the definition of these endomorphisms in \cite{AntLeb1994}, and also formalized the theory of weighted shifts. The importance of studying weighted shifts lies in their role in the solvability of different classes of functional differential equations \cite{AntLeb1994, ChiLat1999}. Furthermore, weighted endomorphisms occur in several branches of mathematics, such as dynamical systems \cite{LatSte1991DS}, ergodic theory \cite{HooLamQui1982, LatSte1991ET}, and variational principles for the spectral radius \cite{BarKwa2021, KwaLeb2020}. 

In the present work, our main topics of investigation are the existence of an endomorphism $\alpha: \mathcal{D}_A \to \mathcal{D}_A$ in duality with the shift map, this problem involves determining when the shift map has a continuous extension $\sigma$ on $X_A$. After this, we study the spectral radius of weighted endomorphisms $a \alpha$. We discovered that the existence of these endomorphisms is equivalent to being able to continuously extend the shift map $\sigma$ to the whole space, where we presented several examples of the existence and absence of such an extension. In particular, we present two classes of generalized countable Markov shifts, namely \emph{single empty-word} and \emph{periodic renewal} classes of shift spaces, where the aforementioned endomorphisms do exist. Moreover, for both classes, the set of empty words $E_A$ is invariant in the sense that $\sigma(E_A) = E_A$. In a more general setting, when the transition matrix is column-finite, we obtain $\sigma(E_A) \subseteq E_A$.

In addition, we explicitly determined a formula for the spectral radius of these endomorphisms for the generalized Markov shifts, with a column-finite transition matrix, which was inspired by the work of B. K. Kwa\'sniewski and A. Lebedev \cite{KwaLeb2020}. They explicitly calculated the spectral radius for weighted endomorphisms on uniform algebras, a class of algebras that contains commutative unital C$^*$-algebras. In our case, the spectral radius of the weighted endomorphisms, with weights in $\mathcal{D}_A$, is expressed in terms of $\sigma$-invariant measures on $\Sigma_A \cup E_A$. Moreover, for a special class of elements in the dense $^*$-subalgebra $\spann \, \mathcal{G}_A \subseteq \mathcal{D}_A$, we also show that this spectral radius depends on invariant measures supported on a transitive subshift with finite alphabet. As an example, we analyze the generalized renewal shift, where we obtain the latter result for weights in $\spann\, \mathcal{G}_A$ that can be written as a linear combination of projections that do not include the identity.

This paper is organized as follows.

In Section \ref{sec:Exel_Laca_GCMS}, we recall some basic notions of the Exel-Laca algebra $\mathcal{O}_A$ via a faithful representation of in $\mathfrak{B}(\ell^2(\Sigma_A))$ and the set $X_A$ as a generalization of the Markov shift space $\Sigma_A$, corresponding to the spectrum of the maximal commutative $\mathrm{C}^*$-subalgebra $\mathcal{D}_A$ of the Exel-Laca algebra when $A$ is irreducible for infinite alphabets (see Subsection 6.3 of \cite{Renault2008Cartan}), our standing assumption. That includes the perspective of identifying the elements of $X_A$ as configurations on Cayley trees, determined by an allowable word (infinite or finite) called \emph{stem} and a subset of the alphabet called \emph{root}, where the last one is only used for elements that correspond to finite words. We briefly explain the topology of $X_A$ via \emph{generalized cylinder sets} and present the shift map, the latter defined in the open subset of $X_A$ corresponding to the non-empty words. 

In Section \ref{sec:Representations for D_A}, we explicitly show the Gelfand transformation between the $\mathrm{C}^*$-algebras $C_0(X_A)$ and $\mathcal{D}_A$. For the Cuntz-Krieger algebras (finite alphabet), the Gelfand transformation connects the operators $S_w S_w^*$ to the characteristic function of the cylinder $[w]$, and it is extended to the whole C$^*$-subalgebra $\mathcal{D}_A$ onto $C(\Sigma_A)$, see \cite{KessStadStrat2007}. Our construction shows the similarities and the differences between the finite and infinite alphabet cases. 

Section \ref{sec:weighted_endomorphisms} starts with the basic definitions and results concerning the weighted endomorphisms we use. We define a function $\alpha_0$ on the generators of $\mathcal{D}_A$ to $\mathfrak{B}(\ell^2(\Sigma_A))$ that is the candidate to extend to the $*$-endomorphism $\alpha$ on $C_0(X_A)$ dual to the shift $\sigma$. In fact, we proved that such an extension exists (equivalently, the image of $\alpha_0$ is contained in $\mathcal{D}_A$) if and only if $\sigma$ has a continuous extension to $X_A$. In this case, the extension of $\sigma$ is dual to $\alpha$. By the Gelfand representation, we have, equivalently, $\alpha(f) = f \circ \sigma$, $f \in C_0(X_A)$. The first example of a transition matrix that does not provide such an endomorphism is presented in this section.

In Section \ref{sec:examples}, we introduce two classes of compact generalized Markov shifts: one with a single empty word and the other with a finite number of $n$ empty words. We showed that these two classes admit a well-defined endomorphism $\alpha$ on $C(X_A)$ that is dual to $\sigma$. Some examples are presented to justify some of the choices in the statement of the definition of the classes above, as well as many examples of matrices whose $X_A$ belong to the two classes above. These classes are not disjont, since the renewal shift is in both families, but they are also non-proper each other because the lazy renewal shift and the pair renewal shift are exclusive elements of each class, respectively. In particular, the column-finiteness condition implies that the set $E_A$ of empty-word configurations is invariant, and we explicitly show how the dynamics on this set acts. We highlight that, in both classes of generalized shifts above, their corresponding standard shifts are not locally compact.

Section \ref{sec:Spectral Radios on Markov shift} is dedicated to characterizing the spectral radius of the weighted endomorphisms $r(a\alpha)$, $a \in \mathcal{D}_A$, where the shift map is dual to $\alpha$. We present a formula for the spectral radius $r(a\alpha)$, $a \in \mathcal{D}_A$ of the weighted endomorphisms on generalized Markov shifts with column-finite transition matrix, with particular emphasis on the generalized renewal shift. Furthermore, we show here that, in the specific case where $a \in \spann \, \mathcal{G}_A$ has a decomposition as a linear combination that does not include a special class of elements (which, in the renewal shift, is the identity), the formula depends on invariant measures on a compact transitive standard Markov subshift $\Sigma_A(\mathcal{A})$ where the matrix $A$ is restricted to a finite alphabet $\mathcal{A}$, remains irreducible.

We began this paper when Iván Diaz-Granados was a Master's student (see \cite{DiazGranados2024}), and some of the results presented here are part of his Ph.D. dissertation.

\numsec=2\numfor=1
\section{Exel-Laca algebras and the generalized countable Markov shifts} \label{sec:Exel_Laca_GCMS}

Consider the alphabet $\mathbb{N}$ and a transition matrix $A = (A(i,j))_{(i,j)\in \mathbb{N}^2}$, with $A(i,j) \in \{0,1\}$. The (standard) \textit{countable Markov shift} is the set $\Sigma_A=\{x=(x_i)_{i \in\mathbb{N}_0} \in \mathbb{N}^{\mathbb{N}_0}: A(x_i,x_{i+1})=1 \ \text{for all} \ i\in\mathbb{N}_0\}$, where $\mathbb{N}_0 = \mathbb{N}\cup\{0\}$, equipped with the topology given by the metric $d(x,y) = 2^{-\min\{n\in\mathbb{N}_0:x_n\neq y_n\}}$ for $x\neq y$, and zero otherwise. Given a \textit{finite word} $\gamma \in \mathbb{N}^n$, $n \in \mathbb{N}$, $|\gamma| = n$ denotes its length, and its respective cylinder set is
$[\gamma] := \{x\in\Sigma_A : x_i=\gamma_i \text{ for all } 0 \leq i < n\}$. We say that $\gamma$ is \textit{admissible} when its respective cylinder is non-empty, and we recall that the topology generated by $d$ is the same one generated by the collection of the cylinder sets, which forms a basis of clopen sets. The dynamics is given by the \emph{shift map} $\sigma: \Sigma_A \to \Sigma_A$, defined by $\sigma(x)_i = x_{i+1}, \forall \ i \in \mathbb{N}_0$. In this paper, every matrix $A$ is \textit{irreducible}, that is, given $(i,j) \in \mathbb{N}^2$ there exists $\gamma = x_0 x_1 ... x_{n-1}$ admissible such that $x_0=i$ and $x_{n-1}=j$. Then, we have $[i]\neq \emptyset \ \forall \ i \in \mathbb{N}$, and the shift map is always topologically \textit{transitive}, which means that given $(i,j) \in \mathbb{N}^2$ there exists $N=N(i,j) \in \mathbb{N}$ such that $[i]\cap \sigma^{-N}([j]) \neq \emptyset$, we will also say that $\Sigma_A$ is transitive.

In the finite alphabet case, Markov shifts are intrinsically connected to their respective Cuntz-Krieger algebras $\mathcal{O}_A$ \cite{CK1980}, for instance, $\mathcal{O}_A$ contains a commutative C$^*$-subalgebra $\mathcal{D}_A$ whose Gelfand spectrum is homeomorphic to $\Sigma_A$, and hence $\mathcal{D}_A$ is isometrically $*$-isomorphic to $C(\Sigma_A)$. For a finite alphabet, $\Sigma_A$ is compact. However, it is well known that for infinite countable alphabets we have the following: $\Sigma_A$ is locally compact if and only if $A$ is row-finite, and the aforementioned C$^*$-subalgebra is isometrically $*$-isomorphic to $C_0(\Sigma_A)$. For infinite countable alphabet, the generalization of the Cuntz-Krieger algebras beyond row-finiteness is the Exel-Laca algebra \cite{EL1999}, which also contains the analog object to $\mathcal{D}_A$, which we keep the same notation as the compact case, also a commutative C$^*$-subalgebra. The spectrum of $\mathcal{D}_A$ is what we call the \textit{generalized Markov shift}, denoted by $X_A$. When $\Sigma_A$ is not locally compact, the space $X_A$ plays the role of $\Sigma_A$, and we have $X_A=\Sigma_A \cup Y_A$ with $\Sigma_A$ and  $Y_A$ dense subsets of $X_A$, for details see \cite{BEFR2018,EL1999,Raszeja2020}. As in literature, we also denote the Exel-Laca algebra by $\mathcal{O}_A$.

In the present manuscript, we use a faithful representation of the Exel-Laca algebra in the algebra of bounded operators $\mathfrak{B}(\ell^2(\Sigma_A))$ to define both $\mathcal{O}_A$ and $\mathcal{D}_A$ as it follows (for details, see \cite{BEFR2018}).

\begin{definition} Let $\Sigma_A$ be a transitive Markov shift. For each $s \in \mathbb{N}$, we consider the partial isometry $T_s \in \mathfrak{B}(\ell^2(\Sigma_A))$, and its adjoint $T_s^*$, given by
\begin{equation*}
    T_s(\delta_\omega)=\begin{cases}
        \delta_{s\omega},& \text{if }\omega \in \sigma([s]),\\
        0,& \text{otherwise};\\
    \end{cases} \quad \text{and} \quad
    T_s^*(\delta_\omega)=\begin{cases}
        \delta_{\sigma(\omega)},& \text{if }\omega\in[s],\\
        0,& \text{otherwise};\\
    \end{cases}
\end{equation*}
where $\{\delta_\omega\}_{\omega\in\Sigma_A}$ is the canonical basis of $\ell^2(\Sigma_A)$. In addition, we define the projections $P_s=T_sT_s^*$ and $Q_i=T_s^*T_s$.
\begin{equation*}
    P_s(\delta_\omega)=\begin{cases}
                            \delta_\omega \text{ if } \omega \in [s], \\
                            0 \text{ otherwise;} 
                        \end{cases}  \text{and} \quad
    Q_s(\delta_\omega)=\begin{cases}
                        \delta_\omega \text{ if } \omega \in \sigma([s]),\\
                            0 \text{ otherwise.}
                        \end{cases}
\end{equation*}
\end{definition}

\begin{definition} Let $A$ be irreducible. The \emph{Exel-Laca algebra} $\mathcal{O}_A$ is the C$^*$-algebra generated by the set $\{T_s: s\in \mathbb{N}\}$, and the \emph{unital Exel-Laca algebra} $\widetilde{\mathcal{O}}_A$ is C$^*$-algebra generated by $\{T_s: s\in \mathbb{N}\}\cup\{1\}$.
\end{definition}

\begin{remark}\label{rmk:properties_O_A} If $\mathcal{O}_A$ is unital, then $\mathcal{O}_A \simeq \widetilde{\mathcal{O}}_A$, otherwise $\widetilde{\mathcal{O}}_A$ is the canonical unitization of $\mathcal{O}_A$. In addition, the following are well-known properties of the operators $T_s$, $s \in \mathbb{N}$, \cite{EL1999}:
    \begin{enumerate}
        \item $T_s^*T_t=0$ and $P_sP_t=0$ if $s\neq t$;
        \item $Q_s$ and $Q_t$ commute for every $s,t$;
        \item $Q_sT_t=A(s,t)T_t$ and $T_t^*Q_s=A(s,t)T_t^*$;
    \end{enumerate}
\end{remark}

An alternative way of defining the Exel-Laca algebra in terms of the closure of the span of a family of generators of a vector space can be found in \cite[Proposition 13]{BEFR2022}, where we consider the free group $\mathbb{F}_\mathbb{N}$ generated by the alphabet $\mathbb{N}$, and the partial group representation
\begin{align*}
    T:\mathbb{F}_\mathbb{N} &\to \widetilde{\mathcal{O}}_A,\\
    s &\mapsto T_s, \\
    s^{-1} &\mapsto T_{s^{-1}}:= T_s^*,
\end{align*}
$T_e = 1$, where $e$ is the identity, and 
\begin{equation*}
    g \mapsto T_g:= T_{x_1} \cdots T_{x_n},
\end{equation*}
for every $g=x_1\dots x_n \in \mathbb{F}_\mathbb{N}$ in its reduced form. The positive cone $\mathbb{F}^+_\mathbb{N} \subseteq \mathbb{F}_\mathbb{N}$ is the unital subsemigroup of \emph{positive words} of the group, that is, the set of elements whose reduced forms do not contain inverses of elements of $\mathbb{N}$, and we use the notion of admissible word in $\mathbb{F}^+_\mathbb{N}$. In particular, $e$ is admissible. Here, we briefly do similar construction but focused on the C$^*$-subalgebra $\mathcal{D}_A \subseteq \mathcal{O}_A$, whose Gelfand spectrum is the generalized Markov shift.

\begin{definition}\label{I_A} Let be $A$ a transition matrix, and denote by $\mathbb{P}_F(\mathbb{N})$ the collection of finite subsets of $\mathbb{N}$. We define the set $I_A$ of $\emph{admissible pairs}$ by
\begin{equation*}
    I_A := \left\{(\beta,F) \in \mathbb{F}^+_\mathbb{N} \times \mathbb{P}_F(\mathbb{N}): \beta \text{ is admissible}, \text{ and } \beta \neq e \text{ or } F \neq \varnothing\right\}.
\end{equation*}
\end{definition}

\begin{lemma}\label{lemma propiedades de los operadores T, P y Q}
    Let $\gamma$ and $\gamma'$ be admissible words with length $n$ and $m$ respectively. We have the following properties:
    \begin{enumerate}
        \item if $n=m$, $T_\gamma^* T_{\gamma'}=\begin{cases}
            Q_{\gamma_{n-1}}, & \text{ if }\gamma=\gamma',\\
            0, & \text{ otherwise; }
                    \end{cases}$
        \item if $n>m$, $T_\gamma^* T_{\gamma'}=\begin{cases}
            T_x^*, & \text{ if }\gamma=\gamma'x,\\
            0, & \text{ otherwise; }
                    \end{cases}$
        \item if $n<m$, $T_\gamma^* T_{\gamma'}=\begin{cases}
            T_x, & \text{ if }\gamma x=\gamma',\\
            0, & \text{ otherwise; }
                    \end{cases}$
    \end{enumerate}
    where $x \in \mathbb{F}_\mathbb{N}^+$ is an admissible word.
\end{lemma}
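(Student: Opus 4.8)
The plan is to reduce everything to the action on the canonical basis $\{\delta_\omega\}_{\omega\in\Sigma_A}$ of $\ell^2(\Sigma_A)$. First I would establish closed forms for $T_\gamma$ and $T_\gamma^*$ on basis vectors. For an admissible word $\gamma$ of length $n$, since $T_\gamma = T_{\gamma_0}\cdots T_{\gamma_{n-1}}$, a straightforward induction on $n$ using the single-letter formulas of the first Definition gives
\[
T_\gamma(\delta_\omega)=\begin{cases}\delta_{\gamma\omega}, & \gamma\omega\in\Sigma_A,\\ 0, & \text{otherwise},\end{cases}
\qquad
T_\gamma^*(\delta_\omega)=\begin{cases}\delta_{\sigma^n(\omega)}, & \omega\in[\gamma],\\ 0, & \text{otherwise}.\end{cases}
\]
In the inductive step for $T_\gamma$, the condition ``$\gamma_1\cdots\gamma_{n-1}\omega\in\sigma([\gamma_0])$'' appearing when one applies $T_{\gamma_0}$ reduces to $A(\gamma_0,\gamma_1)=1$, which is automatic because $\gamma$ is admissible, so the only surviving constraint is that the full concatenation $\gamma\omega$ be admissible. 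The formula for $T_\gamma^*=T_{\gamma_{n-1}}^*\cdots T_{\gamma_0}^*$ follows dually: the chained conditions ``$\omega_0=\gamma_0$'' and ``$\sigma(\omega)\in[\gamma_1\cdots\gamma_{n-1}]$'' amalgamate exactly into $\omega\in[\gamma]$.

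With these two formulas in hand, I would compute $T_\gamma^* T_{\gamma'}(\delta_\omega)$ directly. Applying $T_{\gamma'}$ first yields $\delta_{\gamma'\omega}$ provided $\gamma'\omega\in\Sigma_A$; applying $T_\gamma^*$ then yields $\delta_{\sigma^n(\gamma'\omega)}$ provided $\gamma'\omega\in[\gamma]$, i.e.\ provided $\gamma'\omega$ \emph{begins with} $\gamma$. The whole lemma hinges on analyzing this prefix condition according to the comparison of the lengths $n=|\gamma|$ and $m=|\gamma'|$. When $n=m$, $\gamma'\omega$ begins with $\gamma$ forces $\gamma=\gamma'$, and then $\sigma^n(\gamma\omega)=\omega$, so the operator acts as the identity exactly on those $\omega$ with $\gamma\omega$ admissible, i.e.\ on $\omega\in\sigma([\gamma_{n-1}])$; this is precisely $Q_{\gamma_{n-1}}$, and otherwise one gets $0$. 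When $n>m$, the prefix condition forces $\gamma'$ to be an initial segment of $\gamma$, say $\gamma=\gamma' x$ with $|x|=n-m$, and the surviving constraint becomes $\omega\in[x]$ while $\sigma^n(\gamma'\omega)=\sigma^{|x|}(\omega)$, reproducing $T_x^*$. When $n<m$, symmetrically $\gamma$ must be an initial segment of $\gamma'$, say $\gamma'=\gamma x$, and $\sigma^n(\gamma x\omega)=x\omega$ with admissibility of $\gamma'\omega$ giving $T_x$.

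The one point requiring care — the main (though still mild) obstacle — is keeping the admissibility/prefix bookkeeping exactly consistent with the definitions of $Q_s$, $T_x^*$, and $T_x$, rather than merely reading off the concatenations. In the $n<m$ case in particular I would verify that ``$\gamma'\omega$ admissible'' is equivalent to ``$x\omega$ admissible'' given that $\gamma'=\gamma x$ is itself admissible, so that the constraint matches the domain condition in the definition of $T_x$ and no spurious cases survive; the analogous check in the $n>m$ case matches the domain of $T_x^*$. Once these equivalences are confirmed, reading the three cases off the displayed basis action gives the claimed identities, with the understanding that $x\in\mathbb{F}_\mathbb{N}^+$ is automatically admissible as a subword of an admissible word.
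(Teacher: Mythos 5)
Your proof is correct, but it takes a genuinely different route from the paper's. The paper argues purely algebraically: it writes $T_\gamma^*T_{\gamma'}$ as a product of single-letter operators and collapses it from the inside out using the relations $T_s^*T_t=\delta_{s,t}Q_s$ and $Q_sT_t=A(s,t)T_t$ from Remark \ref{rmk:properties_O_A}, proving case (1) in detail and noting that (2) and (3) are analogous. You instead work in the concrete representation on $\ell^2(\Sigma_A)$, first deriving the closed forms $T_\gamma\delta_\omega=\delta_{\gamma\omega}$ (when $\gamma\omega\in\Sigma_A$) and $T_\gamma^*\delta_\omega=\delta_{\sigma^{|\gamma|}(\omega)}$ (when $\omega\in[\gamma]$) by induction, and then reading all three cases off the prefix condition $\gamma'\omega\in[\gamma]$. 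Your admissibility bookkeeping is right: in case (1) the surviving constraint $\gamma\omega\in\Sigma_A$ is exactly $\omega\in\sigma([\gamma_{n-1}])$, i.e.\ the domain of $Q_{\gamma_{n-1}}$; in case (2) the condition $A(\gamma'_{m-1},x_0)=1$ is automatic from admissibility of $\gamma=\gamma'x$, so only $\omega\in[x]$ survives, matching $T_x^*$; and in case (3) admissibility of $\gamma x\omega$ reduces to that of $x\omega$, matching $T_x$. What each approach buys: the paper's computation is representation-free and would survive in any C$^*$-algebra satisfying the Cuntz--Krieger/Exel--Laca relations, whereas yours is more elementary and transparent but leans on the fact that the paper defines the $T_s$ concretely on $\ell^2(\Sigma_A)$ (which it does, so this is legitimate here). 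Your basis formulas also give a uniform treatment of all three cases at once rather than three separate telescoping computations.
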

\begin{proof} We prove (1) only, since (2) and (3) have similar proofs. We have 
    \begin{align*}
        T_\gamma^* T_{\gamma'}&=T_{\gamma_{n-1}}^*\cdots T_{\gamma_0}^*T_{\gamma'_0}\cdots T_{\gamma'_{n-1}}=T_{\gamma_{n-1}}^*\cdots T_{\gamma_1}^*\delta_{\gamma_0,\gamma'_0}Q_{\gamma_0}T_{\gamma'_1}\cdots T_{\gamma'_{n-1}} \\
        &\stackrel{\dagger}{=}T_{\gamma_{n-1}}^*\cdots T_{\gamma_2}^*A(\gamma_0,\gamma_1)\delta_{\gamma_0,\gamma'_0}T_{\gamma_1}^*T_{\gamma'_1}T_{\gamma'_2}\cdots T_{\gamma'_{n-1}} = \delta_{\gamma_0,\gamma'_0}\cdots \delta_{\gamma_{n-1},\gamma'_{n-1}}Q_{\gamma_{n-1}},
    \end{align*}
    where in $\delta_{a,b}$ is the Kronecker delta, and in $\dagger$ we used Remark \ref{rmk:properties_O_A} (3). Thus, $T_\gamma^* T_{\gamma'}=Q_{\gamma_{n-1}}$ if and only if $\gamma=\gamma'$. \newline
\end{proof}

\begin{definition}\label{def:generators_D_A_indexed_by_pairs} Let be $A$ an irreducible transition matrix. We define $\mathcal{D}_A := \overline{\spann \, \mathcal{G}_A} \subseteq \mathcal{O}_A$, where
\begin{equation*}
    \mathcal{G}_A:=\left\{e_{\gamma,F} \in \mathfrak{B}(\ell^2(\Sigma_A)) : (\gamma,F) \in I_A\right\}
\end{equation*}
and $e_{\gamma,F} = T_\gamma \left(\prod_{j \in F}Q_j\right)T_\gamma^*$.
\end{definition}

\begin{remark}\label{Rem:How_act_operator_e_gamma,F}
Note that the operators $e_{\gamma,F} \in \mathcal{D}_A$ act on the elements $\delta_\omega$ in the following way:
    \begin{equation*}
        \begin{split}
            e_{\gamma,F}(\delta_\omega)=\begin{cases}
            \delta_\omega, & \text{ if }\omega\in[\gamma]\text{ and }A(i,\omega_{|\gamma|})=1 \text{ for all }i\in F; \\
                \,\,\,0, & \text{ otherwise.}
        \end{cases}
        \end{split}
    \end{equation*}
\end{remark}

\begin{remark}\label{rmk:trivial_terms_projections} Observe that the set of indices $I_A$ is merely used to avoid a class of trivial projections since for every pair $(\gamma,F) \in  \mathbb{F}^+_\mathbb{N} \times \mathbb{P}_F(\mathbb{N}) \setminus I_A$ we have $e_{\gamma,F} = 0 \in \mathcal{D}_A$.    
\end{remark}

\begin{proposition}\label{Proposition algebra D_A} The subspace $\mathcal{D}_A$ is a commutative C*-subalgebra of $\mathcal{O}_A$.
\end{proposition}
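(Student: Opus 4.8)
The plan is to use that, by construction, $\mathcal{D}_A = \overline{\spann\,\mathcal{G}_A}$ is already a norm-closed subspace of $\mathcal{O}_A$; to promote it to a commutative C$^*$-subalgebra it then suffices to check, at the level of the generators $e_{\gamma,F}$, that they are self-adjoint, that their products again lie in $\spann\,\mathcal{G}_A$, and that they commute pairwise. Each of these three properties transfers from $\spann\,\mathcal{G}_A$ to its closure via the joint continuity of multiplication and the isometry of the adjoint, so it is enough to establish them on the spanning family.

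Self-adjointness is the quickest step. Since the projections $Q_j$ mutually commute by Remark \ref{rmk:properties_O_A}(2), the finite product $\prod_{j\in F}Q_j$ is itself a self-adjoint projection, whence $e_{\gamma,F}^{*}=T_\gamma\bigl(\prod_{j\in F}Q_j\bigr)^{*}T_\gamma^{*}=e_{\gamma,F}$. Equivalently, Remark \ref{Rem:How_act_operator_e_gamma,F} exhibits $e_{\gamma,F}$ as the orthogonal projection onto the closed linear span of $\{\delta_\omega : \omega\in C_{\gamma,F}\}$, where $C_{\gamma,F}:=\{\omega\in[\gamma] : A(i,\omega_{|\gamma|})=1 \text{ for all } i\in F\}$, and orthogonal projections are self-adjoint. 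In particular $\spann\,\mathcal{G}_A$ is $*$-invariant.

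The substantive step is to compute $e_{\gamma,F}\,e_{\gamma',F'}$ and verify it belongs to $\mathcal{G}_A\cup\{0\}$. Here I would exploit that, by Remark \ref{Rem:How_act_operator_e_gamma,F}, every generator is diagonal in the canonical basis $\{\delta_\omega\}_{\omega\in\Sigma_A}$, acting as the indicator of the set $C_{\gamma,F}$; hence the product is again diagonal and acts as the indicator of $C_{\gamma,F}\cap C_{\gamma',F'}$, reducing everything to identifying this intersection as some $C_{\gamma'',F''}$. Taking $|\gamma|\le|\gamma'|$ without loss, I distinguish whether $\gamma$ is a prefix of $\gamma'$: if not, then $[\gamma]\cap[\gamma']=\varnothing$ and the product vanishes; if $\gamma=\gamma'$, the cylinder constraints coincide and the root sets merge, giving $e_{\gamma,F}\,e_{\gamma,F'}=e_{\gamma,F\cup F'}$; and if $\gamma'=\gamma\eta$ with $|\eta|\ge 1$, then on $[\gamma']$ the coordinate $\omega_{|\gamma|}=\eta_0$ is frozen, so the root condition $A(i,\omega_{|\gamma|})=1$, $i\in F$, holds either for every $\omega\in[\gamma']$ (yielding $e_{\gamma',F'}$) or for none (yielding $0$). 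One may cross-check this via the operator calculus: Lemma \ref{lemma propiedades de los operadores T, P y Q} reduces $T_\gamma^{*}T_{\gamma'}$ to $T_\eta$, to $Q_{\gamma_{|\gamma|-1}}$, or to $0$, and Remark \ref{rmk:properties_O_A}(3) turns $\bigl(\prod_{j\in F}Q_j\bigr)T_\eta$ into $\bigl(\prod_{j\in F}A(j,\eta_0)\bigr)T_\eta$, reproducing the same outcome. Since the intersection $C_{\gamma,F}\cap C_{\gamma',F'}$ is manifestly symmetric in the two factors, commutativity of the generators is simultaneous, and bilinearity extends both the algebra and the commutativity to all of $\spann\,\mathcal{G}_A$.

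The main obstacle is precisely the prefix case $\gamma'=\gamma\eta$: one must see that the interaction between the short stem $\gamma$, whose root condition lives at position $|\gamma|$, and the longer cylinder $[\gamma']$ collapses to a condition independent of $\omega$ (because $\omega_{|\gamma|}$ is already determined by $\gamma'$), so that the product never leaves $\mathcal{G}_A\cup\{0\}$; this is the only place where the combinatorics of the root set $F$ genuinely interacts with the stem, and it is where admissibility together with Remark \ref{rmk:properties_O_A}(3) is used. Once the three generator-level facts are in hand, passing to the closure yields that $\mathcal{D}_A$ is norm-closed, $*$-closed, multiplicatively closed, and commutative, i.e.\ a commutative C$^*$-subalgebra of $\mathcal{O}_A$, as claimed.
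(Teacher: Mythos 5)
Your proof is correct. It follows the same overall skeleton as the paper's argument (reduce to the generators, split into cases according to whether one stem is a prefix of the other, then pass to the closure), but the engine you use for the key computation is different. The paper computes $e_{\gamma,F}e_{\gamma',F'}$ purely by operator calculus: Lemma \ref{lemma propiedades de los operadores T, P y Q} collapses $T_\gamma^*T_{\gamma'}$ to $Q_{\gamma_{n-1}}$, $T_x^*$, $T_x$ or $0$, and Remark \ref{rmk:properties_O_A}(3) absorbs the $Q_j$'s into scalars $A(j,\cdot)$. You instead observe via Remark \ref{Rem:How_act_operator_e_gamma,F} that every generator is a diagonal operator, namely multiplication by the indicator of a subset of $\Sigma_A$, so that the product is the indicator of the intersection and the whole problem becomes set-theoretic; since diagonal operators in a fixed basis are determined by their diagonal, identifying the intersection as $\varnothing$, $C_{\gamma,F\cup F'}$, or the longer cylinder's set does yield the operator identity. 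Your route makes commutativity immediate (intersection is symmetric) and is arguably more transparent in the prefix case, where the frozen coordinate $\omega_{|\gamma|}=\eta_0$ turns the root condition into a constant; it is essentially the function-side computation that the paper carries out separately in Lemma \ref{lemma:span_U_beta_F_star_algebra}. What the paper's purely algebraic computation buys is that it never leans on the explicit action on the basis $\{\delta_\omega\}$, only on the abstract Cuntz--Krieger-type relations. One small item you leave implicit: the containment $\mathcal{G}_A\subseteq\mathcal{O}_A$ (so that the closed $*$-algebra you build actually sits inside $\mathcal{O}_A$) is not automatic from the formula $e_{\gamma,F}=T_\gamma\bigl(\prod_{j\in F}Q_j\bigr)T_\gamma^*$ alone unless you note that $Q_j=T_j^*T_j\in\mathcal{O}_A$; the paper handles this by citing Proposition 13 of \cite{BEFR2018}. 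This is a one-line fix, not a gap.
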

\begin{proof}
    We show that $\overline{\spann \, \mathcal{G}_A}$ is algebraically closed with respect to the product, the remaining algebraic properties are straightforward. Let $e_{\gamma,F}$ and $e_{\gamma',F'}$ with $|\gamma|=n$ and $|\gamma'|=m$. We have three cases to analyze:
    \begin{itemize}
        \item[$(a)$] If $n=m$, Lemma \ref{lemma propiedades de los operadores T, P y Q} (1) gives
        \begin{align*}
            e_{\gamma,F}e_{\gamma',F'}&=T_\gamma \prod_{i \in F}Q_i T_\gamma^*T_{\gamma'} \prod_{j \in F'}Q_i T_{\gamma'}^*=\delta_{(\gamma,\gamma')}T_\gamma \prod_{i \in F}Q_i Q_{\gamma_{n-1}} \prod_{j \in F'}Q_i T_{\gamma'}^* = \delta_{(\gamma,\gamma')} e_{\gamma,\widetilde{F}},
        \end{align*}
        where $\widetilde{F}=F\cup F'\cup\{\gamma_{\gamma_{n-1}}\}$.
        \item[$(b)$] If $n>m$, by lemma \ref{lemma propiedades de los operadores T, P y Q} (1) and (2), we have
        \begin{align*}
            e_{\gamma,F}e_{\gamma',F'}&=\delta_{(\gamma,\gamma' x)}T_\gamma \prod_{i \in F}Q_i \left(T^*_{x} \prod_{j \in F'}Q_i\right) T_{\gamma'}^* =\prod_{j\in F'} A(j,\gamma_m)\delta_{(\gamma,\gamma'x)}T_\gamma \prod_{i \in F}Q_i T^*_{x} T_{\gamma'}^* \\
            &=\prod_{j\in F'} A(j,\gamma_m)\delta_{(\gamma,\gamma'x)}e_{\gamma,F}.
        \end{align*}
        \item[$(c)$] If $n<m$, the proof similar to the previous case, by using Lemma \ref{lemma propiedades de los operadores T, P y Q} (1) and (3), where we obtain
        \begin{align*}
            e_{\gamma,F}e_{\gamma',F'}&=\prod_{i\in F} A(i,\gamma_m)\delta_{(\gamma x,\gamma')}e_{\gamma',F'}.
        \end{align*}
    \end{itemize}
    Hence $\overline{\spann \, \mathcal{G}_A}$ is a commutative C$^*$-subalgebra of $\mathfrak{B}(\ell^2(\Sigma_A))$, and by \cite[Proposition 13]{BEFR2018}, every element in $\mathcal{G}_A$ is contained in $\mathcal{O}_A$, and therefore $\mathcal{D}_A$ is a C$^*$-subalgebra of $\mathcal{O}_A$.
\end{proof}


\begin{remark}\label{Remark:D_A_Tilde_Putting_Unity} A similar construction is done for the unital version of $\mathcal{D}_A$, denoted by $\widetilde{\mathcal{D}}_A \subseteq \widetilde{\mathcal{O}}_A$ can be done as in Proposition \ref{Proposition algebra D_A} by using the set $\widetilde{I}_A$ instead of $I_A$, where
\begin{equation*}
    \widetilde{I}_A = \left\{(\beta,F) \in \mathbb{F}^+_\mathbb{N} \times \mathbb{P}_F(\mathbb{N}): \beta \text{ is admissible}\right\}.
\end{equation*}
Implying that $\widetilde{\mathcal{D}}_A:= \overline{\spann \, \widetilde{\mathcal{G}}_A}$, with $\widetilde{\mathcal{G}}_A:=\left\{e_{\gamma,F} \in \mathfrak{B}(\ell^2(\Sigma_A)) : (\gamma,F) \in \widetilde{I}_A\right\}$.
Observe that $I_A \subseteq \widetilde{I}_A$, and that if $\mathcal{D}_A$ is unital, then $\mathcal{D}_A \simeq \widetilde{\mathcal{D}}_A$.
\end{remark}

Now we present the generalization of the notion of Markov shift.

\begin{definition} Let $A$ be an irreducible matrix, the \emph{generalized Markov shift} $X_A$ is the Gelfand spectum of $\mathcal{D}_A$, and it is endowed with the weak$^*$ topology.    
\end{definition}

As mentioned before, $\Sigma_A$ is a dense subset of $X_A$, and if $Y_A := X_A\setminus \Sigma_A$ is non-empty, then it is also dense. The latter set corresponds to a set of finite words, including empty words, and it is possible to happen multiplicity of these words, that is, the existence of different elements of $Y_A$ that correspond to the same word. The set $X_A$, is at least locally compact, and in many cases compact even for non-locally compact $\Sigma_A$. This is the case of several examples, such as the renewal shift, pair renewal shift, and prime renewal shift. All these examples are presented later in this manuscript. The inclusion $\Sigma_A \hookrightarrow X_A$ is given by the evaluation maps
\begin{equation}\label{eq:inclusion_Sigma_A_in_X_A}
    \varphi_\omega(S) := \left\langle S \delta_\omega, \delta_\omega\right\rangle, \, S \in \mathcal{D}_A, \, \omega \in \Sigma_A,
\end{equation}
where $\left\langle \cdot, \cdot \right\rangle$ is the inner product in $\ell^2(\Sigma_A)$. This inclusion is a topological embedding \cite[Lemma 30]{BEFR2018}. Moreover, if $\Sigma_A$ is locally compact, i.e., $A$ is row-finite (under the irreducibility hypothesis), then $\Sigma_A$ coincides with $X_A$.

As a pictorial way of understanding the generalized space $X_A$, we can see it as a set of elements of $\{0,1\}^{\mathbb{F}_\mathbb{N}}$. In other words we consider the oriented graph given by the Cayley tree generated by $\mathbb{N}$, where the edges between two vertices correspond to a multiplication by an element in $\mathbb{N}$ by the right, and the inverse direction is the multiplication by the respective inverse on the same side, as show in the figure below.
\begin{figure}[!ht]
    \centering
    \tikzset{every picture/.style={line width=0.75pt}} 

\begin{tikzpicture}[x=0.75pt,y=0.75pt,yscale=-1,xscale=1]

\draw  [fill={rgb, 255:red, 0; green, 0; blue, 0 }  ,fill opacity=1 ] (27,40) .. controls (27,35.58) and (30.58,32) .. (35,32) .. controls (39.42,32) and (43,35.58) .. (43,40) .. controls (43,44.42) and (39.42,48) .. (35,48) .. controls (30.58,48) and (27,44.42) .. (27,40) -- cycle ;
\draw  [fill={rgb, 255:red, 0; green, 0; blue, 0 }  ,fill opacity=1 ] (118,40) .. controls (118,35.58) and (121.58,32) .. (126,32) .. controls (130.42,32) and (134,35.58) .. (134,40) .. controls (134,44.42) and (130.42,48) .. (126,48) .. controls (121.58,48) and (118,44.42) .. (118,40) -- cycle ;
\draw [line width=1.5]    (43,40) -- (82,40) ;
\draw [shift={(86,40)}, rotate = 180] [fill={rgb, 255:red, 0; green, 0; blue, 0 }  ][line width=0.08]  [draw opacity=0] (10.4,-6.43) -- (-3,0) -- (10.4,6.44) -- (5.9,0) -- cycle    ;
\draw [line width=1.5]    (75,40) -- (118,40) ;

\draw (25,55) node [anchor=north west][inner sep=0.75pt]  [font=\LARGE]  {$g$};
\draw (76,15.4) node [anchor=north west][inner sep=0.75pt]  [font=\LARGE]  {$a$};
\draw (110,55) node [anchor=north west][inner sep=0.75pt]  [font=\LARGE]  {$ga$};

\end{tikzpicture}
\end{figure}

Any point of $\{0,1\}^{\mathbb{F}_\mathbb{N}}$ is called a \emph{configuration}. And the inclusion $X_A \hookrightarrow \{0,1\}^\mathbb{F}$ is realized next. 
By \cite[Proposition 17]{BEFR2018}, we also have
\begin{equation*}
    \widetilde{D}_A = C^*\left(\left\{e_g:g \in \mathbb{F}_{\mathbb{N}}\right\}\right),
\end{equation*}
where $e_g= T_gT_g^*$, $g \in \mathbb{F}_{\mathbb{N}}$ in reduced form, are pairwise commuting projections. Since $\mathcal{D}_A \subseteq \widetilde{\mathcal{D}_A}$, every element in $\varphi \in X_A$ can be uniquely identified by the collection $(\xi)_{g \in \mathbb{F}_{\mathbb{N}}}$, where $\xi_g := \xi(e_g)$, and since each $e_g$ is a projection, we conclude that $\xi_g \in \{0,1\}$. So $(\xi)_{g \in \mathbb{F}_{\mathbb{N}}} \in \{0,1\}^{\mathbb{F}_\mathbb{N}}$, and $\xi_g = \pi_g(\xi)$, where $\pi_g: \{0,1\}^{\mathbb{F}_\mathbb{N}} \to \{0,1\}$ is the canonical projection at the coordinate $g$. We say that $\xi$ \emph{is filled in} $g$ when its projection in $g$, denoted by $\xi_g$, equals $1$, otherwise we say that $\xi$ \emph{is not filled in} $g$. By endowing $\{0,1\}^\mathbb{F}$ with the product topology, the inclusion $X_A \hookrightarrow \{0,1\}^\mathbb{F}$ is a topological embedding \cite[Proposition 35]{BEFR2018}. The topology of $X_A$ is generated by the subbasis consisting into the \emph{generalized cylinders}
\begin{equation*}
    C_g := \left\{\xi \in X_A : \xi_g = 1 \right\}, \, g \in \mathbb{F},
\end{equation*}
and their complements (for more details, see Section 4 of \cite{BEFR2018}). The elements of $X_A$ in such characterization are uniquely determined by what we call \emph{stem} and \emph{root} of a configuration \cite[Definition 43]{BEFR2018}, presented next.

\begin{definition} Let $\xi \in X_A$. The \emph{stem} $\kappa(\xi)$ of $\xi$ is the unique word $\omega$ (finite or not) that satisfies
\begin{equation*}
    \{g \in \mathbb{F}_\mathbb{N}^+: \xi_g = 1\} = \llbracket \omega \rrbracket,
\end{equation*}
where
\begin{equation*}
    \llbracket \omega \rrbracket = \{e,\omega_0, \omega_0 \omega_1, \omega_0 \omega_1 \omega_2 \dots\}
\end{equation*}
is the set of finite subwords of $\omega$, including $e$. If $\xi \in Y_A$, the \emph{root} $R_\xi$ of $\xi$ is the unique set
\begin{equation*}
    R_\xi := \{j \in \mathbb{N}: \xi_{\kappa(\xi)j^{-1}=1}\},
\end{equation*}
of edges of the Cayley tree of $\mathbb{F}_\mathbb{N}$ whose range is the last symbol of the stem of $\xi$.
\end{definition}
The stem of $\xi = \varphi_{\omega}$, $\omega \in \Sigma_A$, is $\omega$ itself and it is uniquely determined. If $\xi \in Y_A$, then its stem is a finite word whose last letter is an infinite emitter, and is not necessarily unique, however the pair $(\kappa(x),R_\xi)$ determines uniquely $\xi$. Moreover,$R_\xi$ is to a limit point of the sequence $(A(\cdot,n))_{n \in \mathbb{N}}$ of columns of $A$ in $\{0,1\}^\mathbb{N}$ (product topology), via the correspondence $j \in R$ if and only if the $j$-th row of the respective limit point is $1$. In order to evaluate $\xi_g$ on every $g \in \mathbb{F}_{\mathbb{N}}$ for every $\xi \in X_A$, we recall that $X_A \subseteq \Omega_A^\tau \subseteq \{0,1\}^{\mathbb{F}_\mathbb{N}}$ \cite[Theorem 4.6]{EL1999} (see also \cite[Definition 39]{BEFR2018}), a compact set defined via the four rules below:
\begin{itemize}
    \item[$(R1)$] $\xi_e = 1$;
    \item[$(R2)$] $\xi$ is connected, in the sense that for, every $g,h \in \mathbb{F}_{\mathbb{N}}$, if $\xi_g = \xi_h = 1$ then $\xi$ is filled in the whole shortest vertex path in the Cayley tree between $g$ and $h$;
    \item[$(R3)$] for every $g \in \mathbb{F}_{\mathbb{N}}$ satisfying $\xi_g = 1$, there exists at most one symbol $i \in \mathbb{N}$ such that $\xi_{gi} = 1$;
    \begin{figure}[H]
        \centering
        \begin{center}
		\begin{tikzpicture}[scale=1.5,decoration={markings, mark=at position 0.5 with {\arrow{>}}}]
		\node[circle, draw=black, fill=black, inner sep=1pt,minimum size=5pt] (0) at (-3.5,0) {0};
		\node[circle, draw=black, fill=black, inner sep=1pt,minimum size=5pt] (1) at (-3.5,1) {1};
		\node[circle, draw=black, fill=black, inner sep=1pt,minimum size=5pt] (2) at (-2.5,0) {2};
    	\draw[postaction={decorate}, >=stealth] (0)  to (1);
    	\draw[postaction={decorate}, >=stealth] (0)  to (2);
   		\node[left] at (-3.5,0.5) {$j$};
   		\node[left] at (-3.6,0) {$g$};
   		\node[right] at (-2.4,0) {$g i$};
   	    \node[below] at (-3.0,0) {$i$};
   	    \node[above] at (-3.5,1.1) {$g j$};
        \node[below] at (-3.0,-0.5) {Forbidden filling};
        \node[circle, draw=black, fill=black, inner sep=1pt,minimum size=5pt] (3) at (-0.6,0) {3};
		\node[circle, draw=black, fill=white, inner sep=1pt,minimum size=10pt] (4) at (-0.6,1) {\textcolor{white}{4}};
		\node[circle, draw=black, fill=black, inner sep=1pt,minimum size=5pt] (5) at (0.4,0) {5};
    	\draw[postaction={decorate}, >=stealth] (3)  to (4);
    	\draw[postaction={decorate}, >=stealth] (3)  to (5);
   		\node[left] at (-0.6,0.5) {$j$};
   		\node[left] at (-0.7,0) {$g$};
   		\node[right] at (0.5,0) {$g i$};
   	    \node[below] at (0,0) {$i$};
   	    \node[above] at (-0.6,1.1) {$g j$};
        \node[circle, draw=black, fill=black, inner sep=1pt,minimum size=5pt] (6) at (1.5,0) {6};
		\node[circle, draw=black, fill=white, inner sep=1pt,minimum size=10pt] (7) at (1.5,1) {\textcolor{white}{7}};
		\node[circle, draw=black, fill=white, inner sep=1pt,minimum size=10pt] (8) at (2.5,0) {\textcolor{white}{7}};
    	\draw[postaction={decorate},>=stealth] (6)  to (7);
    	\draw[postaction={decorate}, >=stealth] (6)  to (8);
   		\node[left] at (1.5,0.5) {$j$};
   		\node[left] at (1.4,0) {$g$};
   		\node[right] at (2.6,0) {$g i$};
   	    \node[below] at (2,0) {$i$};
   	    \node[above] at (1.5,1.1) {$g j$};
   	    \node[below] at (1,-0.5) {Allowed fillings};
		\end{tikzpicture}
	\end{center}
        \caption{Forbidden and allowed fillings for configurations in $\Omega_A^\tau$ via rule R3.}
        \label{fig:R3}
    \end{figure}
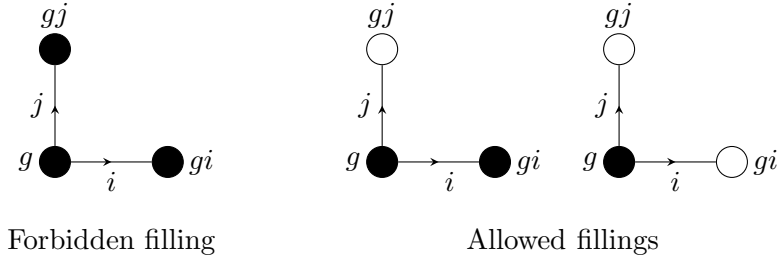
    \item[$(R4)$] given $g \in \mathbb{F}_{\mathbb{N}}$ and $k \in \mathbb{N}$ satisfying $\xi_g = \xi_{gk} = 1$, then
    \begin{equation*}
        \xi_{gj^{-1}} = 1 \iff A(j,k) = 1, \, j \in \mathbb{N}.
    \end{equation*}
    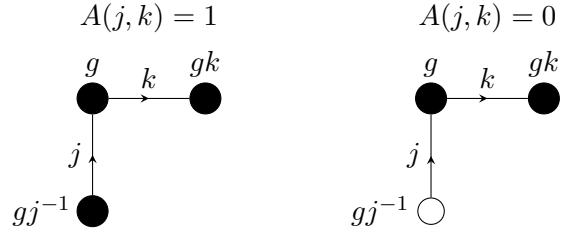
\begin{figure}[h!]
        \begin{center}
		\begin{tikzpicture}[scale=1.5,decoration={markings, mark=at position 0.5 with {\arrow{>}}}]
		\node[circle, draw=black, fill=black, inner sep=1pt,minimum size=5pt] (0) at (0,0) {0};
		\node[circle, draw=black, fill=black, inner sep=1pt,minimum size=5pt] (1) at (0,1) {1};
		\node[circle, draw=black, fill=black, inner sep=1pt,minimum size=5pt] (2) at (1,1) {2};
    	\draw[postaction={decorate}, >=stealth] (0)  to (1);
    	\draw[postaction={decorate}, >=stealth] (1)  to (2);
   		\node[left] at (0,0.5) {$j$};
   	    \node[above] at (0.5,1) {$k$};
   	    \node[above] at (0,1.1) {$g$};
        \node[above] at (1,1.1) {$gk$};
        \node[left] at (-0.1,0) {$gj^{-1}$};
        \node[above] at (0.5,1.5) {$A(j,k)=1$};
       \node[circle, draw=black, fill=white, inner sep=1pt,minimum size=10pt] (3) at (3,0) {};
		\node[circle, draw=black, fill=black, inner sep=1pt,minimum size=5pt] (4) at (3,1) {4};
		\node[circle, draw=black, fill=black, inner sep=1pt,minimum size=5pt] (5) at (4,1) {5};
    	\draw[postaction={decorate},>=stealth] (3)  to (4);
    	\draw[postaction={decorate}, >=stealth] (4)  to (5);
   		\node[left] at (3,0.5) {$j$};
   	    \node[above] at (3.5,1) {$k$};
   	    \node[above] at (3,1.1) {$g$};
        \node[above] at (4,1.1) {$gk$};
        \node[left] at (2.9,0) {$gj^{-1}$};
        \node[above] at (3.5,1.5) {$A(j,k)=0$};
		\end{tikzpicture}
	\end{center}
        \caption{Fillings in $\Omega_A^\tau$ via rule R4.}
        \label{fig:R4}
    \end{figure}
\end{itemize}

\begin{definition} We define the set $F_A$ of finite stem (finite word) configurations of $X_A$ that are not empty, and $E_A$ denotes the set of empty-stem (empty word) configurations. The shift map $\sigma:\Sigma_A \sqcup F_A \to X_A$ is defined as
\begin{equation}
    \sigma(\xi)_g := \xi_{\kappa(\xi)_0^{-1}g}.
\end{equation}    
\end{definition}

\begin{remark}\label{rmk:preimages_generating_Y_A} In the definition above $\Sigma_A \sqcup F_A$ is an open set of $X_A$, and $\sigma$ is a local homeomorphism. In particular, every element of $Y_A$ is in the form $\sigma^{-n}(\xi^0)$, where $\xi^0 \in E_A$ and $n \in \mathbb{N}_0$ \cite[Proposition 54]{BEFR2018}.
\end{remark}

In order to illustrate these configurations, we present some examples of $X_A$ for the renewal, pair renewal and prime renewal shifts. Later in this paper we will use them and other examples to show when it is possible to extend the shift map to the whole space and when it is not.

\begin{example}[Renewal shift]\label{exa:renewal_shift} The transition matrix for the renewal shift is given by $A(1,n) = A(n+1,n) = 1$, $n \in \mathbb{N}$, and zero otherwise. Its symbolic graph is shown next.

\[
\begin{tikzcd}
\circled{1}\arrow[loop left]\arrow[r,bend left]\arrow[rr,bend left]\arrow[rrr,bend left]\arrow[rrrr, bend left]&\circled{2}\arrow[l]&\circled{3}\arrow[l]&\circled{4}\arrow[l]&\arrow[l]\cdots
\end{tikzcd}
\]

The unique limit point of $(A(\cdot,n))_{n \in \mathbb{N}}$ is the vector $(R_1)_j = \delta_{1,j}$, $p \in \mathbb{N}$, and then only possible empty-stem configuration is the one whose root is the set $\{1\}$, and then the non-empty-stem elements in $Y_A$ are one-to-one with words ending in $1$, endowed with the root above. 
\end{example}

\begin{example}[Lazy renewal shift \cite{MichiNakaHisaYoshi2022}] \label{exa:lazy_renewal_shift} The transition matrix for the lazy renewal shift is given by $A(1,n) = A(n+1,n) = A(n,n) = 1$, $n \in \mathbb{N}$, and zero otherwise. Its symbolic graph is the following one.

\[
\begin{tikzcd}
\circled{1}\arrow[loop left]\arrow[r,bend left]\arrow[rr,bend left]\arrow[rrr,bend left]\arrow[rrrr, bend left]&\circled{2}\arrow[loop below]\arrow[l]&\circled{3}\arrow[loop below]\arrow[l]&\circled{4}\arrow[loop below]\arrow[l]&\arrow[l]\cdots
\end{tikzcd}
\]

Similarly to the renewal shift case, the unique empty-stem configuration is the one whose root is the set $\{1\}$, and then the non-empty-stem elements in $Y_A$ are one-to-one with words ending in $1$, and their root is $\{1\}$ as well.
\end{example}

\begin{example}[Pair renewal shift] \label{exa:pair_renewal_shift} The pair renewal shift has a transition matrix as follows: $A(1,n) = A(2,2n) = A(n+1,n) = 1$, $n \in \mathbb{N}$, and zero otherwise. Its symbolic graph is presented below.
\[
\begin{tikzcd}
\circled{1}\arrow[loop left]\arrow[r,bend left]\arrow[rr,bend left]\arrow[rrr,bend left]\arrow[rrrr, bend left]&\circled{2}\arrow[l]\arrow[loop below]\arrow[rr,bend right, swap,]\arrow[rrr,bend right, swap,]&\circled{3}\arrow[l]&\circled{4}\arrow[l]&\arrow[l]\cdots
\end{tikzcd}
\]
Here, the sequence $(A(\cdot,n))_{n \in \mathbb{N}}$ has exactly two limit points, namely $(R_1)_j = \delta_{1,j} + \delta_{2,j}$ and $(R_2)_j = \delta_{1,j}$, representing the roots $\{1,2\}$ and $\{1\}$ respectively. By Remark \ref{rmk:preimages_generating_Y_A} we have two families of finite words, namely
\begin{equation*}
    Y_A^1 = \bigsqcup_{n \in \mathbb{N}_0} \sigma^{-1}(\{(e,R_1)\}) \; \text{ and } \; Y_A^2 = \bigsqcup_{n \in \mathbb{N}_0} \sigma^{-1}(\{(e,R_2)\}).
\end{equation*}
In $Y_A^1$ and $Y_A^2$ are disjoint sets, where inside each set the configurations are one-to-one with their stems \cite[Proposition 54]{BEFR2018}. $Y_A^1$ is formed by an empty word an and words ending in $1$ and $2$, while $Y_A^1$ consists into an empty word and words ending in $1$. The empty words of the pair renewal shift are shown in Figure \ref{fig:empty_stem_pair_renewal}.

\begin{figure}[H]
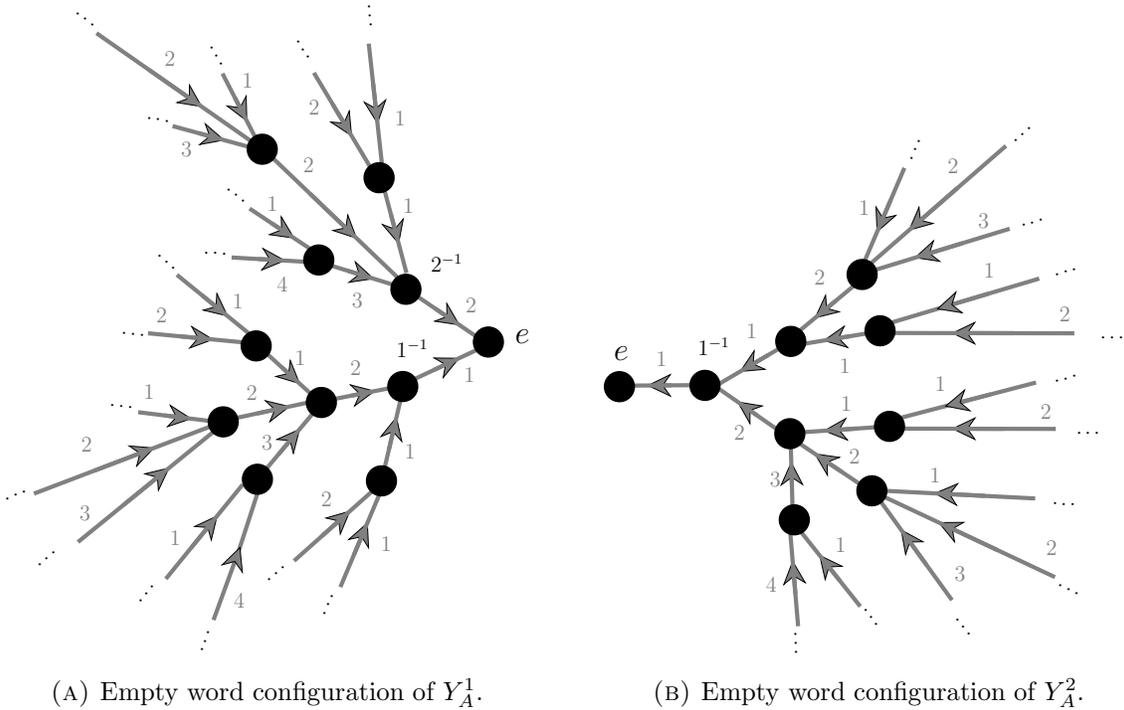

\centering
\begin{subfigure}{.5\textwidth}
  \centering
  \input{figures/pair_renewal_1}
  \caption{Empty word configuration of $Y_A^1$.}
  \label{fig:pair_renewal_2_elements_in_root}
\end{subfigure}%
\begin{subfigure}{.5\textwidth}
  \centering
  \input{figures/pair_renewal_2}
  \caption{Empty word configuration of $Y_A^2$.}
  \label{fig:pair_renewal_1_element_in_root}
\end{subfigure}
\caption{Empty words configurations of the pair renewal shift\label{fig:empty_stem_pair_renewal}}
\end{figure}

\end{example}

\section{Representations for \texorpdfstring{$\mathcal{D}_A$}{DA}.}\label{sec:Representations for D_A}
In this section, we present explicitly the Gelfand $*$-isomorphism between $\mathcal{D}_A$ and $C_0(X_A)$ for an arbitrary irreducible transition matrix $A$, by mapping the generators of each $C^*$-algebra. It is well-known that $\mathcal{D}_A$ and $C_0(X_A)$ are $*$-isomorphic, see for instance Theorem 8.4 of \cite{EL1999}, and that $C(\Sigma_A)$ for the finite alphabet case is a C$^*$-algebra generated by the projections $\{T_wT_w^*: w \text{ admissible word of } \Sigma_A\}$, see for example Proposition 2.5 of \cite{CK1980}. Also, for $X_A$, we have that $\mathcal{D}_A$ is generated via the projections $e_g:= T_gT_g^*$, $g \in \mathbb{F}$ in reduced form, see Proposition 17 of \cite{BEFR2018}, which plays a similar role realized by the projections $T_wT_w^*$. The $*$-isomorphism constructed follows the same spirit of the ideas aforementioned in this paragraph: we present a family of projections that forms a dense $*$-subalgebra of characteristic functions on a collection of clopen sets and associate them to a collection of projections that generates $\mathcal{D}_A$.

Throughout this paper, we assume $A$ irreducible and we use the following notation.
\begin{definition} \label{def:V_beta_F}
Given $(\beta,F) \in I_A$, we define
\begin{equation}\label{eq:V_set}
    V_{\beta,F}:= C_\beta \cap \bigcap_{j \in F}C_{\beta j^{-1}},
\end{equation}
where $C_e = \bigcap_{j \in \varnothing}C_{\beta j^{-1}} = X_A$, and $C_{e j^{-1}} := C_{j^{-1}}$.
\end{definition}

\begin{remark} Although it could seem redundant the intersection in \eqref{eq:V_set}, it is not, because $F$ may be empty.    
\end{remark}

We illustrate the elements in $V_{\beta,F}$ from Definition \ref{def:V_beta_F} in Figure \ref{fig:V_beta_F_2_symbols}. 

\begin{figure}
    \centering
    \scalebox{.8}{

\tikzset{every picture/.style={line width=0.75pt}} 

\begin{tikzpicture}[x=0.75pt,y=0.75pt,yscale=-1,xscale=1]

\draw [color={rgb, 255:red, 128; green, 128; blue, 128 }  ,draw opacity=1 ][line width=3]  [dash pattern={on 7.88pt off 4.5pt}]  (442,153) -- (555.58,153.04) ;
\draw [color={rgb, 255:red, 128; green, 128; blue, 128 }  ,draw opacity=1 ][line width=3]    (322,153) -- (386,153) ;
\draw [shift={(392,153)}, rotate = 180] [fill={rgb, 255:red, 128; green, 128; blue, 128 }  ,fill opacity=1 ][line width=0.08]  [draw opacity=0] (18.75,-9.01) -- (0,0) -- (18.75,9.01) -- (12.45,0) -- cycle    ;
\draw [color={rgb, 255:red, 128; green, 128; blue, 128 }  ,draw opacity=1 ][line width=3]    (378.42,152.96) -- (442,153) ;

\draw [color={rgb, 255:red, 128; green, 128; blue, 128 }  ,draw opacity=1 ][line width=3]    (201,153) -- (265,153) ;
\draw [shift={(271,153)}, rotate = 180] [fill={rgb, 255:red, 128; green, 128; blue, 128 }  ,fill opacity=1 ][line width=0.08]  [draw opacity=0] (18.75,-9.01) -- (0,0) -- (18.75,9.01) -- (12.45,0) -- cycle    ;
\draw [color={rgb, 255:red, 128; green, 128; blue, 128 }  ,draw opacity=1 ][line width=3]    (257.42,152.96) -- (321,153) ;

\draw [color={rgb, 255:red, 128; green, 128; blue, 128 }  ,draw opacity=1 ][line width=3]    (80,153) -- (144,153) ;
\draw [shift={(150,153)}, rotate = 180] [fill={rgb, 255:red, 128; green, 128; blue, 128 }  ,fill opacity=1 ][line width=0.08]  [draw opacity=0] (18.75,-9.01) -- (0,0) -- (18.75,9.01) -- (12.45,0) -- cycle    ;
\draw [color={rgb, 255:red, 128; green, 128; blue, 128 }  ,draw opacity=1 ][line width=3]    (136.42,152.96) -- (200,153) ;

\draw  [fill={rgb, 255:red, 0; green, 0; blue, 0 }  ,fill opacity=1 ] (310,153) .. controls (310,147.48) and (314.48,143) .. (320,143) .. controls (325.52,143) and (330,147.48) .. (330,153) .. controls (330,158.52) and (325.52,163) .. (320,163) .. controls (314.48,163) and (310,158.52) .. (310,153) -- cycle ;
\draw  [fill={rgb, 255:red, 0; green, 0; blue, 0 }  ,fill opacity=1 ] (190,153) .. controls (190,147.48) and (194.48,143) .. (200,143) .. controls (205.52,143) and (210,147.48) .. (210,153) .. controls (210,158.52) and (205.52,163) .. (200,163) .. controls (194.48,163) and (190,158.52) .. (190,153) -- cycle ;
\draw  [fill={rgb, 255:red, 0; green, 0; blue, 0 }  ,fill opacity=1 ] (70,153) .. controls (70,147.48) and (74.48,143) .. (80,143) .. controls (85.52,143) and (90,147.48) .. (90,153) .. controls (90,158.52) and (85.52,163) .. (80,163) .. controls (74.48,163) and (70,158.52) .. (70,153) -- cycle ;
\draw [color={rgb, 255:red, 128; green, 128; blue, 128 }  ,draw opacity=1 ][line width=3]    (398.58,48.78) .. controls (391.38,79.26) and (390.04,101.58) .. (440.5,153.31) ;
\draw [color={rgb, 255:red, 128; green, 128; blue, 128 }  ,draw opacity=1 ][line width=3]    (398.58,48.78) .. controls (390.87,76.45) and (396.61,92.19) .. (400.9,100.73) ;
\draw [shift={(403.66,105.98)}, rotate = 243.69] [fill={rgb, 255:red, 128; green, 128; blue, 128 }  ,fill opacity=1 ][line width=0.08]  [draw opacity=0] (18.75,-9.01) -- (0,0) -- (18.75,9.01) -- (12.45,0) -- cycle    ;
\draw  [color={rgb, 255:red, 128; green, 128; blue, 128 }  ,draw opacity=1 ][fill={rgb, 255:red, 0; green, 0; blue, 0 }  ,fill opacity=1 ] (389.92,43.78) .. controls (392.69,38.99) and (398.8,37.35) .. (403.58,40.12) .. controls (408.37,42.88) and (410.01,48.99) .. (407.24,53.78) .. controls (404.48,58.56) and (398.37,60.2) .. (393.58,57.44) .. controls (388.8,54.68) and (387.16,48.56) .. (389.92,43.78) -- cycle ;

\draw [color={rgb, 255:red, 128; green, 128; blue, 128 }  ,draw opacity=1 ][line width=3]    (397.87,259.27) .. controls (391.53,228.29) and (390.19,205.97) .. (440.22,153.49) ;
\draw [color={rgb, 255:red, 128; green, 128; blue, 128 }  ,draw opacity=1 ][line width=3]    (397.87,259.27) .. controls (391.68,227.68) and (395.92,217.17) .. (401.58,205.56) ;
\draw [shift={(404.18,200.2)}, rotate = 115.3] [fill={rgb, 255:red, 128; green, 128; blue, 128 }  ,fill opacity=1 ][line width=0.08]  [draw opacity=0] (18.75,-9.01) -- (0,0) -- (18.75,9.01) -- (12.45,0) -- cycle    ;
\draw  [color={rgb, 255:red, 128; green, 128; blue, 128 }  ,draw opacity=1 ][fill={rgb, 255:red, 0; green, 0; blue, 0 }  ,fill opacity=1 ] (389.21,264.27) .. controls (386.44,259.49) and (388.08,253.37) .. (392.87,250.61) .. controls (397.65,247.85) and (403.76,249.49) .. (406.53,254.27) .. controls (409.29,259.06) and (407.65,265.17) .. (402.87,267.93) .. controls (398.08,270.7) and (391.97,269.06) .. (389.21,264.27) -- cycle ;

\draw  [fill={rgb, 255:red, 0; green, 0; blue, 0 }  ,fill opacity=1 ] (430,153) .. controls (430,147.48) and (434.48,143) .. (440,143) .. controls (445.52,143) and (450,147.48) .. (450,153) .. controls (450,158.52) and (445.52,163) .. (440,163) .. controls (434.48,163) and (430,158.52) .. (430,153) -- cycle ;

\draw (73,167.4) node [anchor=north west][inner sep=0.75pt]  [font=\huge]  {$e$};
\draw (190,167.4) node [anchor=north west][inner sep=0.75pt]  [font=\LARGE]  {$\beta _{0}$};
\draw (292,167.4) node [anchor=north west][inner sep=0.75pt]  [font=\LARGE]  {$\beta _{0} \beta _{1}$};
\draw (430,167.4) node [anchor=north west][inner sep=0.75pt]  [font=\LARGE]  {$\beta $};
\draw (383,15) node [anchor=north west][inner sep=0.75pt]  [font=\LARGE]  {$\beta i^{-1}$};
\draw (383,270.4) node [anchor=north west][inner sep=0.75pt]  [font=\LARGE]  {$\beta j^{-1}$};
\draw (131,116.4) node [anchor=north west][inner sep=0.75pt]  [font=\LARGE,color={rgb, 255:red, 128; green, 128; blue, 128 }  ,opacity=1 ]  {$\textcolor[rgb]{0.5,0.5,0.5}{\beta _{0}}$};
\draw (250,117.4) node [anchor=north west][inner sep=0.75pt]  [font=\LARGE]  {$\textcolor[rgb]{0.5,0.5,0.5}{\beta _{1}}$};
\draw (370,117.4) node [anchor=north west][inner sep=0.75pt]  [font=\LARGE]  {$\textcolor[rgb]{0.5,0.5,0.5}{\beta _{2}}$};
\draw (412,75.4) node [anchor=north west][inner sep=0.75pt]  [font=\LARGE]  {$\textcolor[rgb]{0.5,0.5,0.5}{i}$};
\draw (411,212.4) node [anchor=north west][inner sep=0.75pt]  [font=\LARGE]  {$\textcolor[rgb]{0.5,0.5,0.5}{j}$};

\end{tikzpicture}
}
    \caption{Description of the elements of $V_{{\beta,F}}$. Given, for instance, $\beta = \beta_0\beta_1\beta_2$ and $F = \{i,j\}$, $i \neq j$, the elements of $V_{{\beta,F}}$ are precisely the elements in $X_A$ whose the set of filled words include the picture above. }
    \label{fig:V_beta_F_2_symbols}
\end{figure}
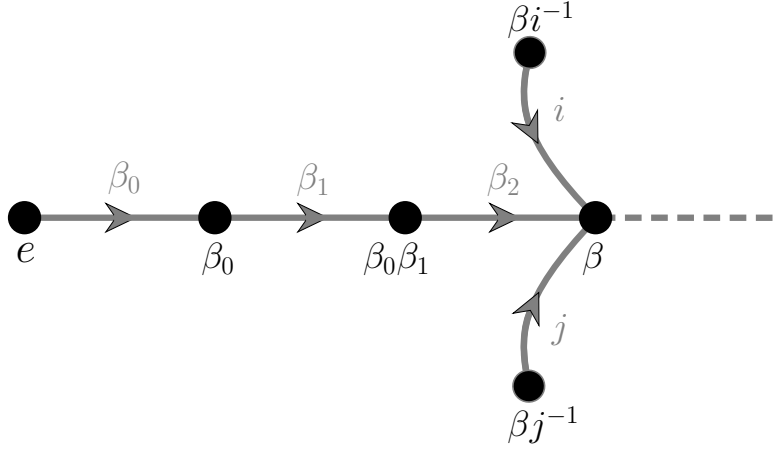

\begin{lemma}\label{lemma:C_j^-1_compact} Let $j \in \mathbb{N}$ an $\alpha$ an admissible word (including the word $e$). Then $C_{\alpha j^{-1}}$ is a compact subset of $X_A$.    
\end{lemma}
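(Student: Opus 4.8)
The plan is to deduce compactness from the observation that $C_{\alpha j^{-1}}$ is exactly the support of a \emph{projection living in} $\mathcal{D}_A$, and then to read off compactness from the Gelfand picture. Throughout I read $\alpha j^{-1}$ as a reduced word, i.e. $\alpha = e$ or the last letter of $\alpha$ differs from $j$; this is the only case of interest, since otherwise $\alpha j^{-1}$ collapses to a positive word and $C_{\alpha j^{-1}}$ is an ordinary positive cylinder.

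The first step is to identify the projection. Using $T_{\alpha j^{-1}} = T_\alpha T_j^{*}$ and $Q_j = T_j^{*}T_j$, one computes
\[
    e_{\alpha j^{-1}} \;=\; T_{\alpha j^{-1}}\, T_{\alpha j^{-1}}^{*} \;=\; T_\alpha\, Q_j\, T_\alpha^{*} \;=\; e_{\alpha,\{j\}}.
\]
Since $F=\{j\}\neq\varnothing$, the pair $(\alpha,\{j\})$ lies in $I_A$ (the condition of Definition \ref{I_A} holds even when $\alpha=e$), so by Definition \ref{def:generators_D_A_indexed_by_pairs} we have $e_{\alpha j^{-1}} = e_{\alpha,\{j\}} \in \mathcal{G}_A \subseteq \mathcal{D}_A$. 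Thus $e_{\alpha j^{-1}}$ is a genuine projection of the commutative C$^{*}$-algebra $\mathcal{D}_A$.

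The second step is to transport this to $C_0(X_A)$. Recall that $X_A$ is the Gelfand spectrum of $\mathcal{D}_A$ and that the coordinates of a configuration are defined by $\xi_g = \xi(e_g)$. Hence the Gelfand transform of $e_{\alpha j^{-1}}$ is the evaluation $\xi \mapsto \xi(e_{\alpha j^{-1}}) = \xi_{\alpha j^{-1}}$, which equals $1$ on $C_{\alpha j^{-1}} = \{\xi : \xi_{\alpha j^{-1}}=1\}$ and $0$ elsewhere (a character sends a projection into $\{0,1\}$). Therefore $\widehat{e_{\alpha j^{-1}}} = \mathbf 1_{C_{\alpha j^{-1}}}$. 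By the Gelfand representation theorem, $\widehat{e_{\alpha j^{-1}}} \in C_0(X_A)$, so $\mathbf 1_{C_{\alpha j^{-1}}}$ is a continuous function vanishing at infinity on the locally compact Hausdorff space $X_A$. Applying the standard fact that $\{\xi : |f(\xi)| \ge \varepsilon\}$ is compact for every $f\in C_0(X_A)$ and every $\varepsilon>0$, with $f=\mathbf 1_{C_{\alpha j^{-1}}}$ and $\varepsilon=\tfrac12$, gives that $C_{\alpha j^{-1}}$ is compact; the degenerate case $e_{\alpha,\{j\}}=0$, i.e. $C_{\alpha j^{-1}}=\varnothing$, is trivial.

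The main point requiring care is precisely the identification $\widehat{e_{\alpha j^{-1}}} = \mathbf 1_{C_{\alpha j^{-1}}}$ together with the reduced-word bookkeeping: the whole argument rests on $e_{\alpha j^{-1}}$ being the non-unital generator $e_{\alpha,\{j\}}$ rather than a positive cylinder projection $e_\alpha$ (or the unit, when $\alpha=e$ reduces $\alpha j^{-1}$ away), which is exactly where the failure of compactness for ordinary positive cylinders would creep in. I would emphasize that a purely topological alternative — writing $C_{\alpha j^{-1}} = X_A \cap \pi_{\alpha j^{-1}}^{-1}(\{1\})$ and trying to show it is closed in the compact space $\Omega_A^{\tau}$ — is less convenient, since it forces one to prove that every configuration of $\Omega_A^{\tau}$ filled at $\alpha j^{-1}$ already belongs to $X_A$, re-introducing the delicate comparison between $\Omega_A^{\tau}$ and $X_A$; the operator-algebraic route avoids this altogether.
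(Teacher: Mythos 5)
Your argument is correct, and it is genuinely different from what the paper does: the paper gives no self-contained proof here, deferring instead to ``similar steps as in Theorem 57 of \cite{BEFR2018}'', which is a direct topological argument about configurations. You instead observe that $e_{\alpha j^{-1}} = T_\alpha Q_j T_\alpha^* = e_{\alpha,\{j\}}$ with $(\alpha,\{j\}) \in I_A$, so that $\mathbbm{1}_{C_{\alpha j^{-1}}}$ is the Gelfand transform of a projection already lying in $\mathcal{D}_A$ and hence belongs to $C_0(X_A)$, from which compactness of the level set $\{\,|f|\ge \tfrac12\,\}$ is immediate. This is cleaner and entirely internal to the paper's setup (it uses only Definition \ref{def:generators_D_A_indexed_by_pairs}, the definition of $X_A$ as the spectrum of $\mathcal{D}_A$, and the tautology $\widehat{e_g}=\mathbbm{1}_{C_g}$ coming from $\xi_g:=\xi(e_g)\in\{0,1\}$); it also isolates exactly why positive cylinders behave differently (the unit is the only generator that may fail to lie in $\mathcal{D}_A$). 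What the paper's cited route buys is independence from the operator picture; what yours buys is a two-line proof with no external reference. One small remark: your restriction to the reduced case is the right call, but note that when $\alpha j^{-1}$ collapses to a \emph{non-empty} positive word $\alpha'$ the same argument still applies with $e_{\alpha',\varnothing}\in\mathcal{G}_A$, so the only genuinely excluded case is $\alpha=j$, where $C_{\alpha j^{-1}}=C_e=X_A$ and the conclusion can actually fail for non-compact $X_A$ --- a degenerate case the lemma's statement glosses over and which never occurs in its applications, since there $C_{\beta j^{-1}}$ is always intersected with $C_\beta$.
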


\begin{proof} The proof follows similar steps as in Theorem 57 of \cite{BEFR2018}.
\end{proof}

\begin{lemma} For every $(\beta, F) \in I_A$, we have $\mathbbm{1}_{V_{\beta,F}} \in C_c(X_A)$. 
\end{lemma}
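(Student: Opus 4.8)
The plan is to prove the stronger statement that $V_{\beta,F}$ is a \emph{compact open} subset of $X_A$, from which the conclusion is immediate: the indicator function of a clopen set is continuous, and since $V_{\beta,F}$ is closed we have $\supp(\mathbbm{1}_{V_{\beta,F}}) = \overline{V_{\beta,F}} = V_{\beta,F}$, so compactness of $V_{\beta,F}$ gives compactness of the support.

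First I would record continuity. Each generalized cylinder $C_g$ is clopen, since the topology of $X_A$ is generated by the subbasis consisting of the sets $C_g$ together with their complements. Hence $V_{\beta,F} = C_\beta \cap \bigcap_{j\in F} C_{\beta j^{-1}}$, being a finite intersection of clopen sets, is itself clopen; in particular it is closed and $\mathbbm{1}_{V_{\beta,F}}$ is continuous. It then remains only to establish compactness, and I would split into two cases according to the definition of $I_A$. If $F \neq \varnothing$, I fix any $j \in F$, observe that $V_{\beta,F} \subseteq C_{\beta j^{-1}}$ directly from the definition \eqref{eq:V_set}, and invoke Lemma \ref{lemma:C_j^-1_compact} (with $\alpha = \beta$, or $\alpha = e$ when $\beta = e$) to see that $C_{\beta j^{-1}}$ is compact; a closed subset of a compact set is compact, so $V_{\beta,F}$ is compact.

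The genuinely delicate case, and the one I expect to be the main obstacle, is $F = \varnothing$. Here the definition of $I_A$ forces $\beta \neq e$, so $V_{\beta,\varnothing} = C_\beta$, and Lemma \ref{lemma:C_j^-1_compact} does \emph{not} apply verbatim, since it only provides compactness of cylinders of the form $C_{\alpha j^{-1}}$. The idea is to embed $C_\beta$ into such a cylinder using rule $(R4)$ together with irreducibility. I would write $\beta = \beta' s$, where $s := \beta_{|\beta|-1}$ is the last symbol of $\beta$ and $\beta'$ is the (admissible) prefix obtained by deleting $s$. Irreducibility furnishes a symbol $j$ with $A(j,s) = 1$ (take the penultimate letter of an admissible path terminating at $s$). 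Now for any $\xi \in C_\beta$ connectedness (rule $(R2)$) gives $\xi_{\beta'} = \xi_{\beta' s} = 1$, so rule $(R4)$ yields $\xi_{\beta' j^{-1}} = 1$; that is, $C_\beta \subseteq C_{\beta' j^{-1}}$. Since $C_{\beta' j^{-1}}$ is compact by Lemma \ref{lemma:C_j^-1_compact} (with $\alpha = \beta'$, allowing $\beta' = e$ when $|\beta| = 1$) and $C_\beta$ is closed, $C_\beta$ is compact.

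In either case $V_{\beta,F}$ is compact and clopen, so $\mathbbm{1}_{V_{\beta,F}}$ is continuous with compact support, i.e. $\mathbbm{1}_{V_{\beta,F}} \in C_c(X_A)$. The only non-routine point is the reduction $C_\beta \subseteq C_{\beta' j^{-1}}$ in the empty-$F$ case, where one must combine irreducibility with rule $(R4)$ to manufacture the required inverse letter; everything else follows from the clopenness of the cylinders and Lemma \ref{lemma:C_j^-1_compact}.
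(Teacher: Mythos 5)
Your proof is correct and follows the same route as the paper: clopenness because the generalized cylinders are clopen, and compactness via Lemma \ref{lemma:C_j^-1_compact}. The paper's own proof is a one-liner that cites that lemma without addressing the case $F=\varnothing$ (where $V_{\beta,\varnothing}=C_\beta$ is not literally of the form $C_{\alpha j^{-1}}$); your reduction $C_\beta\subseteq C_{\beta' j^{-1}}$ via irreducibility and rule $(R4)$ correctly supplies that missing step, so your write-up is, if anything, more complete than the one in the paper.
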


\begin{proof} For every $(\beta,F) \in I_A$, the set $V_{\beta,F}$ is either empty or a finite non-empty intersection of generalized cylinders, and hence clopen since generalized cylinders are clopen, and it is compact because of Lemma \ref{lemma:C_j^-1_compact}. We conclude that $\mathbbm{1}_{V_{\gamma,F}} \in C_c(X_A)$.

\end{proof}

\begin{lemma} \label{lemma:span_U_beta_F_star_algebra} The space
\begin{equation*}
    \mathscr{B}_A = \spann \, \{\mathbbm{1}_{V_{\beta,F}}: (\beta, F) \in I_A\}
\end{equation*}
is a $*$-subalgebra of $C_0(X_A)$.    
\end{lemma}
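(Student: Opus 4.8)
The plan is to verify the three defining properties of a $*$-subalgebra, of which only closure under multiplication requires real work. By construction $\mathscr{B}_A$ is a linear subspace of $C_0(X_A)$ (each generator lies in $C_c(X_A)\subseteq C_0(X_A)$ by the previous lemma), and since every $\mathbbm{1}_{V_{\beta,F}}$ is real-valued we have $\overline{\mathbbm{1}_{V_{\beta,F}}}=\mathbbm{1}_{V_{\beta,F}}$, so $\mathscr{B}_A$ is closed under the involution $f\mapsto\overline{f}$. Because multiplication is bilinear, closure under products reduces to showing that $\mathbbm{1}_{V_{\beta,F}}\cdot\mathbbm{1}_{V_{\beta',F'}}\in\mathscr{B}_A$ for arbitrary $(\beta,F),(\beta',F')\in I_A$. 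Using $\mathbbm{1}_U\mathbbm{1}_{U'}=\mathbbm{1}_{U\cap U'}$, the whole lemma thus comes down to the set-theoretic claim that $V_{\beta,F}\cap V_{\beta',F'}$ is either empty or equal to $V_{\beta'',F''}$ for some $(\beta'',F'')\in I_A$.

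First I would compare the stems. The filled positive vertices of a configuration $\xi$ form the nested family $\llbracket\kappa(\xi)\rrbracket$ of prefixes of its stem; hence if $\xi_\beta=\xi_{\beta'}=1$ then $\beta$ and $\beta'$ are comparable as words. Consequently $C_\beta\cap C_{\beta'}=\varnothing$ whenever $\beta$ and $\beta'$ are incomparable (making the product $0\in\mathscr{B}_A$), while if $\beta$ is a prefix of $\beta'$ then $C_{\beta'}\subseteq C_\beta$ and $C_\beta\cap C_{\beta'}=C_{\beta'}$. By symmetry I may assume $|\beta|\le|\beta'|$ with $\beta$ a prefix of $\beta'$.

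The heart of the argument is then the treatment of the root conditions $C_{\beta j^{-1}}$ via rule $(R4)$. Suppose $\beta$ is a proper prefix of $\beta'$ and let $k$ be the letter of $\beta'$ immediately following $\beta$, so that $\xi_\beta=\xi_{\beta k}=1$ for every $\xi\in C_{\beta'}$. Rule $(R4)$ then gives $\xi_{\beta j^{-1}}=1\iff A(j,k)=1$; thus on $C_{\beta'}$ each factor $C_{\beta j^{-1}}$ with $j\in F$ is either all of $C_{\beta'}$ (when $A(j,k)=1$) or disjoint from it (when $A(j,k)=0$), the case $j=\beta_{|\beta|-1}$ being automatic because then $\beta j^{-1}$ collapses to a prefix of $\beta$. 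Hence $C_{\beta'}\cap\bigcap_{j\in F}C_{\beta j^{-1}}$ equals $C_{\beta'}$ if $\prod_{j\in F}A(j,k)=1$ and is empty otherwise, and intersecting with the remaining factors $\bigcap_{j'\in F'}C_{\beta'j'^{-1}}$ yields either $\varnothing$ or exactly $V_{\beta',F'}$. When instead $\beta=\beta'$ one gets $V_{\beta,F}\cap V_{\beta',F'}=V_{\beta,F\cup F'}$ directly, and the case $|\beta|>|\beta'|$ is symmetric, producing $\varnothing$ or $V_{\beta,F}$. In every case the outcome is $\varnothing$ or some $V_{\beta'',F''}$ with $(\beta'',F'')\in I_A$, reproducing precisely the three cases of Proposition \ref{Proposition algebra D_A}.

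I expect the main obstacle to be the careful bookkeeping of the root conditions, namely the application of $(R4)$ and the harmless letter $j=\beta_{|\beta|-1}$ (which also explains why adjoining the last letter of the stem to $F$ never alters $V_{\beta,F}$), together with checking that the resulting pair $(\beta'',F'')$ still lies in $I_A$ so that $\mathbbm{1}_{V_{\beta'',F''}}$ is genuinely one of the spanning functions; the comparison of stems and the reduction to a single product of generators are routine.
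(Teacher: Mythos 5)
Your proposal is correct and follows essentially the same route as the paper's proof: reduce by bilinearity to products of two generators, observe that nonempty intersection forces the stems to be comparable, and then use rule $(R4)$ to show that each root factor $C_{\beta j^{-1}}$ either contains or is disjoint from the longer cylinder, so the intersection collapses to $\varnothing$ or to a single $V_{\beta'',F''}$. The only difference is cosmetic case bookkeeping (you merge the paper's subcases (b1) and (b2) into the single "proper prefix" case), and your observation about the harmless letter $j=\beta_{|\beta|-1}$ matches the paper's conventions.
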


\begin{proof} The vector space properties and the closeness of the involution are straightforward. We prove that the product is also a closed operation. Let $(\beta, F),  (\gamma, G) \in I_A$, then
\begin{align*}
    U = \mathbbm{1}_{V_{\beta,F}} \mathbbm{1}_{V_{\gamma,G}} &= \mathbbm{1}_{C_\beta \cap C_\gamma} \mathbbm{1}_{\bigcap_{j \in F}C_{\beta j^{-1}}} \mathbbm{1}_{\bigcap_{k \in G}C_{\gamma k^{-1}}},
\end{align*}
and the product above is non-zero if and only if either $\gamma \in \llbracket \beta \rrbracket$ or $\beta \in \llbracket \gamma \rrbracket$. Without loss of generality, assume $\gamma \in \llbracket \beta \rrbracket$. We have
\begin{align*}
    \mathbbm{1}_{V_{\beta,F}} \mathbbm{1}_{V_{\gamma,G}} &= \mathbbm{1}_{C_\beta} \mathbbm{1}_{\bigcap_{j \in F}C_{\beta j^{-1}}} \mathbbm{1}_{\bigcap_{k \in G}C_{\gamma k^{-1}}}.
\end{align*}
We divide the proof by cases.
\begin{itemize}
    \item[\textbf{Case (a):}] $\beta = e$. We necessarily have $\gamma = e$, and $F,G \neq \varnothing$. One gets
    \begin{align*}
        U = \mathbbm{1}_{\bigcap_{j \in F}C_{j^{-1}}} \mathbbm{1}_{\bigcap_{k \in G}C_{k^{-1}}} = \mathbbm{1}_{\bigcap_{j \in F \cup G}C_{j^{-1}}} = \mathbbm{1}_{V_{e,F \cup G}} \in \mathscr{B}_A.
    \end{align*}
    \item[\textbf{Case (b):}] $\beta \neq e$. We have
    \begin{align*}
        W := C_\beta \cap C_\gamma \cap \bigcap_{j \in F}C_{\beta j^{-1}} \cap \bigcap_{k \in G}C_{\gamma k^{-1}} = C_\beta \cap \bigcap_{j \in F}C_{\beta j^{-1}} \cap \bigcap_{k \in G}C_{\gamma k^{-1}}.
\end{align*}
    \begin{itemize}
        \item[\textit{Subcase (b1)}:] $\gamma = e$. We get 
        \begin{align*}
            W &= C_\beta \cap \bigcap_{j \in F}C_{\beta j^{-1}} \cap \bigcap_{k \in G} C_{k^{-1}}.
        \end{align*}
        If there exists $k \in G$ such that $A(k,\beta_0) = 0$, then $W = \varnothing$ because (R1) and (R4), and hence $U = 0$. Otherwise, i.e., $A(k,\beta_0) = 1$ for every $k \in G$, one gets $C_{\beta_0} \subseteq C_{k^{-1}}$ and hence
        \begin{align*}
            W &= C_\beta \cap \bigcap_{j \in F}C_{\beta j^{-1}},
        \end{align*}
        therefore $U = \mathbbm{1}_{V_{\beta,F}} \in \mathscr{B}_A$.
        \item[\textit{Subcase (b2)}:] $\gamma \notin \{e,\beta\}$. We have $\gamma = \beta_0 \dots \beta_{|\gamma|-1}$, and so 
        \begin{align*}
            W &= C_\beta \cap \bigcap_{j \in F}C_{\beta j^{-1}} \cap \bigcap_{k \in G}C_{\beta_0 \dots  \beta_{|\gamma|-1} k^{-1}}.
        \end{align*}
        Similarly to the previous case, if there exists $k \in G$ such that $A(k, \beta_{|\gamma|}) = 0$, then $W = \varnothing$. Otherwise, $C_\beta \subseteq C_{\beta_0 \dots  \beta_{|\gamma|-1} k^{-1}}$ and hence
        \begin{align*}
            W &= C_\beta \cap \bigcap_{j \in F}C_{\beta j^{-1}},
        \end{align*}
        where we obtain $U = \mathbbm{1}_{V_{\beta,F}}$.
        \item[\textit{Subcase (b3)}:] $\gamma = \beta$. One gets
        \begin{align*}
            W &= C_\beta \cap \bigcap_{j \in F}C_{\beta j^{-1}} \cap \bigcap_{k \in G}C_{\beta k^{-1}} = C_\beta \cap \bigcap_{j \in F \cup G}C_{\beta j^{-1}},
        \end{align*}
        and we obtain $U = \mathbbm{1}_{V_{\beta,F \cup G}}$.
    \end{itemize}
\end{itemize}
\end{proof}

\begin{proposition} Let $\mathscr{B}_A$ as in the statement of Lemma \ref{lemma:span_U_beta_F_star_algebra}. Then $\overline{\mathscr{B}_A} = C_0(X_A)$.
\end{proposition}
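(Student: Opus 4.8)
The plan is to deduce the statement from the Stone--Weierstrass theorem for the algebra $C_0$ of a locally compact Hausdorff space. Recall that $X_A$, being the Gelfand spectrum of $\mathcal{D}_A$ (equivalently a subspace of the compact Hausdorff cube $\{0,1\}^{\mathbb{F}_\mathbb{N}}$), is locally compact and Hausdorff, and that $\mathscr{B}_A$ is a $*$-subalgebra of $C_0(X_A)$ by Lemma \ref{lemma:span_U_beta_F_star_algebra}. It therefore suffices to verify the two hypotheses of the complex Stone--Weierstrass theorem for $C_0(X_A)$: that $\mathscr{B}_A$ \emph{vanishes nowhere}, i.e. for each $\xi \in X_A$ there is $f \in \mathscr{B}_A$ with $f(\xi) \neq 0$, and that $\mathscr{B}_A$ \emph{separates the points} of $X_A$.

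For the nonvanishing condition I would argue through the stem--root description. If $\xi$ has nonempty stem $\omega = \kappa(\xi)$, then its first letter $\omega_0$ satisfies $\xi_{\omega_0} = 1$, so $\xi \in C_{\omega_0} = V_{\omega_0,\varnothing}$ with $(\omega_0,\varnothing) \in I_A$, whence $\mathbbm{1}_{V_{\omega_0,\varnothing}}(\xi) = 1$. If instead $\xi \in E_A$ has empty stem, then its root $R_\xi$ must be nonempty: otherwise $\xi_g = 0$ for every $g \neq e$ and $\xi$ would annihilate every generator $e_{\gamma,F}$, contradicting that the elements of $X_A$ are nonzero characters of $\mathcal{D}_A$. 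Picking $j \in R_\xi$ gives $\xi_{j^{-1}} = 1$, so $\xi \in C_{j^{-1}} = V_{e,\{j\}}$ with $(e,\{j\}) \in I_A$, and $\mathbbm{1}_{V_{e,\{j\}}}(\xi) = 1$. Thus $\mathscr{B}_A$ vanishes nowhere.

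For separation, take $\xi \neq \eta$ in $X_A$ and split according to their stems. If $\kappa(\xi) \neq \kappa(\eta)$, then since the positive filled coordinates of a configuration are exactly the prefixes $\llbracket \kappa(\cdot)\rrbracket$, there is a positive word $\beta \neq e$ in the symmetric difference of $\llbracket \kappa(\xi)\rrbracket$ and $\llbracket \kappa(\eta)\rrbracket$; assuming without loss of generality that $\beta \in \llbracket \kappa(\xi)\rrbracket$, we get $\xi \in C_\beta = V_{\beta,\varnothing}$ while $\eta \notin C_\beta$, so $\mathbbm{1}_{V_{\beta,\varnothing}}$ separates them. If $\kappa(\xi) = \kappa(\eta) =: \omega$, the stem cannot be infinite, for an infinite stem forces $\xi,\eta \in \Sigma_A$, where a configuration is completely determined by its stem, giving $\xi = \eta$. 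Hence $\omega$ is finite and $\xi,\eta \in Y_A$; as the pair (stem, root) determines a configuration uniquely, their roots differ, and choosing $j \in R_\xi \setminus R_\eta$ (say) yields $\xi_{\omega j^{-1}} = 1$ and $\eta_{\omega j^{-1}} = 0$. Then $\mathbbm{1}_{V_{\omega,\{j\}}}(\xi) = 1$ because $\xi_\omega = \xi_{\omega j^{-1}} = 1$, while $\mathbbm{1}_{V_{\omega,\{j\}}}(\eta) = 0$ since $\eta_{\omega j^{-1}} = 0$; as $(\omega,\{j\}) \in I_A$, this function lies in $\mathscr{B}_A$ and separates $\xi$ from $\eta$.

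With both hypotheses verified, Stone--Weierstrass yields $\overline{\mathscr{B}_A} = C_0(X_A)$. The step I expect to be most delicate is the point-separation argument, which rests on the structural facts that the positive filled words of any $\xi$ coincide with the prefixes of its stem, that $\Sigma_A$-configurations are determined by their (infinite) stems, and that the stem--root pair determines an element of $Y_A$. Once these are in hand, every pair of distinct configurations is separated by a single generating indicator $\mathbbm{1}_{V_{\beta,F}}$, so no linear combinations are actually needed and the verification is purely combinatorial.
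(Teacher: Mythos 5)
Your proof is correct and follows essentially the same route as the paper: both apply the complex Stone--Weierstrass theorem for $C_0$ of a locally compact Hausdorff space after checking that the single indicators $\mathbbm{1}_{V_{\beta,F}}$ already vanish nowhere and separate points. The only cosmetic difference is the organization of the case analysis (you split by whether the stems coincide, the paper by membership in $\Sigma_A$ versus $Y_A$, and it uses $V_{e,\{i\}}$ uniformly for nonvanishing), but the underlying combinatorial facts about stems and roots are identical.
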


\begin{proof} By Lemma \ref{lemma:span_U_beta_F_star_algebra}, $\mathscr{B}_A$ is a $*$-subalgebra of $C_0(X_A)$ and hence self-adjoint. We prove the following facts:
\begin{enumerate}
    \item for every $\xi \in X_A$, there exists $f \in \mathscr{B}_A$ s.t. $f(\xi) \neq 0$;
    \item $\mathscr{B}_A$ separates points in $X_A$.
\end{enumerate}
In fact, for every $\xi \in X_A$, there exists $i \in \mathbb{N}$ satisfying $\xi_{i^{-1}}=1$, and hence $\mathbbm{1}_{V_{e, \{i\}}}(\xi) = 1$ and (1) is proved. Now, to prove (2), let $\xi^1, \xi^2 \in X_A$, $\xi^1 \neq \xi^2$. We have the following cases:
\begin{itemize}
    \item[$(2.a)$] $\xi^1, \xi^2 \in \Sigma_A$. Since $\Sigma_A$ is Hausdorff, there exist two disjoint cylinders $[\alpha^1]$ and $[\alpha^2]$, containing respectively $\xi^1$ and $\xi^2$. It is straightforward that $C_{\alpha^1} \cap C_{\alpha^2} = \varnothing$ and then
    \begin{equation*}
        \mathbbm{1}_{V_{\alpha^1, \varnothing}}(\xi^1) = 1 \neq 0 = \mathbbm{1}_{V_{\alpha^1, \varnothing}}(\xi^2);
    \end{equation*}
    \item[$(2.b)$] $\xi^1, \xi^2 \in Y_A$. If $\kappa(\xi^1) \neq \kappa(\xi^2)$, we may assume without loss of generality that $|\kappa(\xi^1)| \geq |\kappa(\xi^2)|$, and hence
    \begin{equation*}
        \mathbbm{1}_{V_{\kappa(\xi^1), \varnothing}}(\xi^1) = 1 \neq 0 = \mathbbm{1}_{V_{\kappa(\xi^1), \varnothing}}(\xi^2),
    \end{equation*}
    because in this case we have either $|\kappa(\xi^1)| > |\kappa(\xi^2)|$ or $|\kappa(\xi^1)| = |\kappa(\xi^2)|$ with at least one different symbol on the same coordinate. Now, if $\kappa(\xi^1) = \kappa(\xi^2) = \alpha$, then w.l.o.g. we may assume that there exists $i \in \mathbb{N}$, $i \neq \alpha_{|\alpha|-1}$ when $\alpha \neq e$, satisfying $\xi^{1}_{\alpha i^{-1}} = 1$ and $\xi^{2}_{\alpha i^{-1}} = 0$, and hence
    \begin{equation*}
        \mathbbm{1}_{V_{\alpha, \{i\}}}(\xi^1) = 1 \neq 0 = \mathbbm{1}_{V_{\alpha, \{i\}}}(\xi^2);
    \end{equation*}
    \item[$(2.c)$] $\xi^1 \in \Sigma_A$ and $\xi^2 \in Y_A$. Take $\alpha = \kappa(\xi^1)_0 \dots \kappa(\xi^1)_{n-1}$, $n >  |\kappa(\xi^2)|+1$. We obtain
    \begin{equation*}
        \mathbbm{1}_{V_{\alpha, \varnothing}}(\xi^1) = 1 \neq 0 = \mathbbm{1}_{V_{\alpha, \varnothing}}(\xi^2).
    \end{equation*}
\end{itemize}
Then, the assertion (2) is proved. By the Stone-Weierstrass theorem, we conclude that $\mathscr{B}_A$ is dense in $C_0(X_A)$. 
\end{proof}

\begin{lemma}\label{lemma:products_D_A} For every $(\beta,F), (\gamma, G) \in I_A$. If $\gamma \in \llbracket \beta \rrbracket$, then
    \begin{align*}
        e_{\beta,F}e_{\gamma,G} &= 
        \begin{cases}
            e_{\beta,F\cup G}, \; \text{if } \beta = \gamma; \\
            e_{\beta,F}, \; \text{if } \beta \neq \gamma \text{ and } A(k,\beta_{|\gamma|}) = 1, \; k \in G;\\
            0,\; \text{otherwise.}
        \end{cases}.
    \end{align*}
If $\beta \in \llbracket \gamma \rrbracket$, the identity above holds in analogous way by exchanging $\beta$ with $\gamma$ an $F$ with $G$. The identity above is zero if $\gamma \notin \llbracket \beta \rrbracket$ and $\beta \notin \llbracket \gamma \rrbracket$.
\end{lemma}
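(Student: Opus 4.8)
The plan is to compute the product by exploiting the fact that each generator $e_{\beta,F}$ is a \emph{diagonal} projection in the orthonormal basis $\{\delta_\omega\}_{\omega\in\Sigma_A}$, as recorded in Remark \ref{Rem:How_act_operator_e_gamma,F}. Since the product of two such diagonal projections is again diagonal, it suffices to evaluate $(e_{\beta,F}e_{\gamma,G})(\delta_\omega)$ for an arbitrary $\omega\in\Sigma_A$ and to identify the resulting diagonal operator with one of the claimed generators (or with $0$). This sidesteps the operator gymnastics of Proposition \ref{Proposition algebra D_A} and makes the prefix dichotomy transparent.

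First I would fix $\omega$ and unwind the two conditions. Applying Remark \ref{Rem:How_act_operator_e_gamma,F} twice, $(e_{\beta,F}e_{\gamma,G})(\delta_\omega)=\delta_\omega$ exactly when $\omega\in[\beta]\cap[\gamma]$, with $A(i,\omega_{|\beta|})=1$ for all $i\in F$ and $A(k,\omega_{|\gamma|})=1$ for all $k\in G$; otherwise the image is $0$. Two admissible cylinders $[\beta]$ and $[\gamma]$ intersect only when one word is a prefix of the other, i.e. when $\gamma\in\llbracket\beta\rrbracket$ or $\beta\in\llbracket\gamma\rrbracket$; in the remaining case they differ at some coordinate, so $[\beta]\cap[\gamma]=\varnothing$, the product annihilates every $\delta_\omega$, and hence equals $0$. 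This already settles the final sentence of the statement.

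Assuming $\gamma\in\llbracket\beta\rrbracket$, so that $[\beta]\subseteq[\gamma]$ and $|\gamma|\le|\beta|$, I would split into $\beta=\gamma$ and $\beta\neq\gamma$. When $\beta=\gamma$ the surviving condition is $\omega\in[\beta]$ together with $A(i,\omega_{|\beta|})=1$ for all $i\in F\cup G$, which is precisely the defining condition of $e_{\beta,F\cup G}$. When $\beta\neq\gamma$ we have $|\gamma|<|\beta|$, so for any $\omega\in[\beta]$ the coordinate $\omega_{|\gamma|}$ is forced to equal $\beta_{|\gamma|}$; hence the requirement $A(k,\omega_{|\gamma|})=1$ for all $k\in G$ is independent of $\omega$ and reads $A(k,\beta_{|\gamma|})=1$ for all $k\in G$. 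If this holds, the product keeps $\delta_\omega$ iff $\omega\in[\beta]$ and $A(i,\omega_{|\beta|})=1$ for all $i\in F$, giving $e_{\beta,F}$; if it fails, no $\delta_\omega$ survives and the product is $0$. The case $\beta\in\llbracket\gamma\rrbracket$ follows by commutativity of $\mathcal{D}_A$ (Proposition \ref{Proposition algebra D_A}), swapping the roles of $(\beta,F)$ and $(\gamma,G)$.

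The only genuinely delicate point is the harmless discrepancy with Proposition \ref{Proposition algebra D_A}, whose case $n=m$ outputs $e_{\beta,F\cup G\cup\{\beta_{|\beta|-1}\}}$ rather than $e_{\beta,F\cup G}$: the extra symbol $\beta_{|\beta|-1}$ never changes the operator, because for every $\omega\in[\beta]$ admissibility forces $A(\beta_{|\beta|-1},\omega_{|\beta|})=A(\omega_{|\beta|-1},\omega_{|\beta|})=1$, so adjoining $\beta_{|\beta|-1}$ to the root set imposes no new constraint. I would record this observation once and use it to identify the diagonal operators cleanly; with it in hand each case matches the stated formula verbatim. An equivalent route is to invoke Proposition \ref{Proposition algebra D_A} directly after matching indices (its $\delta_{(\cdot,\cdot)}$ and $\delta_{(\cdot,\cdot\,x)}$ factors encode exactly the prefix relations) and applying this same simplification; I expect the diagonal-action computation to be the cleaner presentation.
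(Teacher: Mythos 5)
Your argument is correct, but it follows a genuinely different route from the paper's. The paper proves this lemma by symbolic manipulation of the partial isometries: it invokes Lemma \ref{lemma propiedades de los operadores T, P y Q} to reduce $T_\beta^*T_\gamma$, then pushes the $Q_k$ factors through using $T_q^*Q_p = A(p,q)T_q^*$ and $Q_{\beta_{|\beta|-1}}T_{\beta_{|\beta|-1}}^* = T_{\beta_{|\beta|-1}}^*$, splitting into the cases $\beta=e$; $\gamma=e$; $\gamma\notin\{e,\beta\}$; $\gamma=\beta$. You instead evaluate both factors on each basis vector $\delta_\omega$ via Remark \ref{Rem:How_act_operator_e_gamma,F} and identify the resulting diagonal projection directly; since two bounded operators agreeing on the orthonormal basis $\{\delta_\omega\}$ coincide, this is a complete proof. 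Your route makes the prefix dichotomy and the vanishing case $[\beta]\cap[\gamma]=\varnothing$ immediate, and your observation that adjoining $\beta_{|\beta|-1}$ to the root set is vacuous (because admissibility of $\omega\in[\beta]$ forces $A(\omega_{|\beta|-1},\omega_{|\beta|})=1$) is exactly the content the paper absorbs algebraically in its subcase $\gamma=\beta$. What the paper's computation buys is that it runs entirely inside the $*$-algebra generated by the $T_s$, so the case analysis lines up term by term with the corresponding cases of Lemma \ref{lemma:span_U_beta_F_star_algebra}, which is how Theorem \ref{thm:isomorphism_C_0_X_A_and_D_A} later verifies that $\widetilde\pi$ is multiplicative; your diagonal computation is the more economical standalone proof, but if it replaced the paper's one would want to note that the case structure still matches that of Lemma \ref{lemma:span_U_beta_F_star_algebra}.
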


\begin{proof} Let $(\beta,F), (\gamma, G) \in I_A$. We have
\begin{equation*}
    e_{\beta,F}e_{\gamma,G} = T_\beta \left(\prod_{j \in F}Q_j\right)T_\beta^* T_\gamma \left(\prod_{k \in G}Q_j\right)T_\gamma^*.
\end{equation*}
Note that $T_\beta^*T_\gamma \neq 0$ if and only if either $\gamma \in \llbracket \beta \rrbracket$ or $\beta \in \llbracket \gamma \rrbracket$, and w.l.o.g. we may assume $\gamma \in \llbracket \beta \rrbracket$ because $\mathcal{D}_A$ is commutative. We divide the proof in two cases.
\begin{itemize}
    \item[\textbf{Case (a):}] $\beta = e$. We necessarily have $\gamma = e$, and $F,G \neq \varnothing$. One gets,
    \begin{equation*}
        e_{\beta,F}e_{\gamma,G} = e_{e,F}e_{e,G} = \prod_{j \in F\cup G}Q_j = e_{e,F\cup G}  =e_{\beta,F\cup G}.
    \end{equation*}
    \item[\textbf{Case (b):}] $\beta \neq e$. We divide this case into three subcases as follows.

    \begin{itemize}
        \item[\textit{Subcase (b1)}:] $\gamma = e$. We obtain
        \begin{equation*}
            e_{\beta,F}e_{\gamma,G}  = T_\beta \left(\prod_{j \in F}Q_j\right)T_\beta^*  \left(\prod_{k\in G}Q_k \right)\stackrel{\dagger}{=}  \left(\prod_{k \in G}A(k,\beta_0)\right) e_{\beta,F},
        \end{equation*}
        where in $\dagger$ we used $T_q^*Q_p = A(p,q) T_q^*$, and hence
        \begin{equation*}
            e_{\beta,F}e_{\gamma,G} = \begin{cases}
                    e_{\beta,F}, \; \text{if } A(k,\beta_0) = 1 \text{ for every } k \in G;\\
                    0, \; \text{otherwise}.
            \end{cases}
        \end{equation*}
        
        \item[\textit{Subcase (b2)}:] $\gamma \notin \{e,\beta\}$. We have then $\gamma = \beta_0 \dots \beta_{|\gamma|-1}$, and
        \begin{align*}
            e_{\beta,F}e_{\gamma,G} &= T_\beta \left(\prod_{j \in F}Q_j\right)T_\beta^* T_\gamma \left( \prod_{k\in G}Q_k \right) T_\gamma^* \stackrel{\dagger}{=}  \left(\prod_{k \in G}A(k,\beta_{|\gamma|})\right) e_{\beta,F}\\
            &= \begin{cases}
                    e_{\beta,F}, \; \text{if } A(k,\beta_{|\gamma|}) = 1 \text{ for every } k \in G;\\
                    0, \; \text{otherwise}.
                \end{cases}
        \end{align*}

        \item[\textit{Subcase (b3)}:] $\gamma = \beta$. One gets
        \begin{align*}
             e_{\beta,F}e_{\gamma,G} &= T_\beta \left(\prod_{j \in F}Q_j\right)T_\beta^* T_\beta \left( \prod_{k\in G}Q_k \right) T_\beta^* \stackrel{\dagger}{=}  T_\beta \left(\prod_{j \in F}Q_j\right)Q_{\beta_{|\beta|-1}} \left( \prod_{k\in G}Q_k \right) T_\beta^* \stackrel{\ddagger}{=} e_{\beta,F \cup G},
        \end{align*}
        where in $\ddagger$ we used the comutativity between projections $Q_i$ and $Q_j$, and the identity $Q_{\beta_{|\beta|-1}}T_{\beta_{|\beta|-1}}^* = T_{\beta_{|\beta|-1}}^*$.
    \end{itemize}
\end{itemize}  
\end{proof}

It is not a general fact that, given two C$^*$-algebras $\mathfrak{A}_1$ and $\mathfrak{A}_2$, and $A_i \subseteq \mathfrak{A}_i$ dense $*$-subalgebras, and a $*$-homomorphism $\pi: A_1 \to A_2$, we can extend $\pi$ to a continuous $*$-homomorphism between $\mathfrak{A}_1$
and $\mathfrak{A}_2$. A discussion where it is presented a counterexample can be found here \cite{D2018}. Next, we present here the notion of core subalgebra, which was constructed in \cite{ExelGioGon2011}, and has properties that allow us to extend continuously $*$-isomorphisms between dense $*$-subalgebras to the whole C$^*$-algebras which contain them. We use these subalgebras to explicit an $*$-isomorphism between $C_0(X_A)$ and $\mathcal{D}_A$, as a generalization to what is already known for the case of the Cuntz-Krieger algebras.

\begin{definition} Let $\mathfrak{B}$ be a $*$-algebra. A \emph{representation of} $\mathfrak{B}$ is a $*$-homomorphism $\Theta: \mathfrak{B} \to \mathfrak{B} (\mathcal{H})$ for some Hilbert space $\mathcal{H}$.
\end{definition}

\begin{definition} Let $\mathfrak{A}$ be a C$^*$-algebra. A $*$-subalgebra $\mathfrak{B} \subseteq \mathfrak{A}$ is said to be a \emph{core subalgebra} of $\mathfrak{A}$ if every representation of $\mathfrak{B}$ is continuous with respect to the norm of $\mathfrak{A}$.
\end{definition}

\begin{proposition}[Proposition 4.4 of \cite{ExelGioGon2011}]\label{prop:isomorphism_of_C_star_algebras_via_isomorphism_between_core_subalgebras} Let $\mathfrak{A}_1$, $\mathfrak{A}_2$ C$^*$-algebras. Given $\mathfrak{B}_1$ a $\mathfrak{B}_2$ dense core subalgebras of $\mathfrak{A}_1$ and $\mathfrak{A}_2$ respectively, if $\mathfrak{B}_1$ and $\mathfrak{B}_2$ are $*$-isomorphic, then $\mathfrak{A}_1$ and $\mathfrak{A}_2$ are isometrically $*$-isomorphic.
\end{proposition}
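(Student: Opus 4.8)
The plan is to exploit the defining property of core subalgebras to transport a faithful representation of $\mathfrak{A}_2$ backwards along the given isomorphism, and then extend by continuity. Write $\phi: \mathfrak{B}_1 \to \mathfrak{B}_2$ for the given $*$-isomorphism. By the Gelfand--Naimark theorem I would fix a faithful, hence isometric, representation $\pi: \mathfrak{A}_2 \to \mathfrak{B}(\mathcal{H})$ on some Hilbert space $\mathcal{H}$. The composition $\pi \circ \phi: \mathfrak{B}_1 \to \mathfrak{B}(\mathcal{H})$ is then a $*$-homomorphism, i.e.\ a representation of the $*$-algebra $\mathfrak{B}_1$. Since $\mathfrak{B}_1$ is a \emph{core} subalgebra of $\mathfrak{A}_1$, this representation is automatically continuous with respect to the norm of $\mathfrak{A}_1$. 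This is the one step where the hypothesis genuinely does work, and it is precisely what fails in the counterexample of \cite{D2018}.

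Because $\mathfrak{B}_1$ is dense in $\mathfrak{A}_1$ and $\pi \circ \phi$ is bounded on it, it extends uniquely to a bounded linear map $\widetilde{\Phi}: \mathfrak{A}_1 \to \mathfrak{B}(\mathcal{H})$; since multiplication and involution are norm-continuous, $\widetilde{\Phi}$ is again a $*$-homomorphism. Next I would verify that its range lies inside $\pi(\mathfrak{A}_2)$: by continuity $\widetilde{\Phi}(\mathfrak{A}_1) = \widetilde{\Phi}(\overline{\mathfrak{B}_1}) \subseteq \overline{\pi(\phi(\mathfrak{B}_1))} = \overline{\pi(\mathfrak{B}_2)}$, and as $\pi$ is isometric (so a homeomorphism onto its closed image) this closure equals $\pi(\overline{\mathfrak{B}_2}) = \pi(\mathfrak{A}_2)$. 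Since $\pi$ is injective, $\pi^{-1}$ is defined on $\pi(\mathfrak{A}_2)$, so I can set $\Phi := \pi^{-1} \circ \widetilde{\Phi}: \mathfrak{A}_1 \to \mathfrak{A}_2$, a $*$-homomorphism restricting to $\phi$ on $\mathfrak{B}_1$, because $\Phi|_{\mathfrak{B}_1} = \pi^{-1} \circ \pi \circ \phi = \phi$.

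Running the same construction with the roles of $(\mathfrak{A}_1,\mathfrak{B}_1)$ and $(\mathfrak{A}_2,\mathfrak{B}_2)$ interchanged, using $\phi^{-1}$ together with a faithful representation of $\mathfrak{A}_1$, produces a $*$-homomorphism $\Psi: \mathfrak{A}_2 \to \mathfrak{A}_1$ extending $\phi^{-1}$. The compositions $\Psi \circ \Phi$ and $\Phi \circ \Psi$ are continuous $*$-homomorphisms agreeing with the identity on the dense subalgebras $\mathfrak{B}_1$ and $\mathfrak{B}_2$ respectively, so by continuity they equal $\mathrm{id}_{\mathfrak{A}_1}$ and $\mathrm{id}_{\mathfrak{A}_2}$. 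Hence $\Phi$ is a bijective $*$-homomorphism, and invoking the standard fact that an injective $*$-homomorphism between C$^*$-algebras is isometric, $\Phi$ is an isometric $*$-isomorphism, as required.

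The only genuinely delicate point is the invocation of the core property to obtain boundedness of $\pi \circ \phi$ in the $\mathfrak{A}_1$-norm; everything else is formal manipulation with density, continuity, and the automatic contractivity of $*$-homomorphisms. Indeed, the whole content of the statement is that a bare $*$-isomorphism of dense $*$-subalgebras need not extend, and the continuity that makes the extension possible must be imported from somewhere — here it is imported exactly through the definition of a core subalgebra.
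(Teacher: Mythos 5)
Your proof is correct. The paper does not prove this proposition itself---it is quoted as Proposition 4.4 of \cite{ExelGioGon2011}---so there is no in-paper argument to compare against; your route (pull a faithful representation of $\mathfrak{A}_2$ back through $\phi$, invoke the core property of $\mathfrak{B}_1$ to get boundedness in the $\mathfrak{A}_1$-norm, extend by density, and symmetrize to obtain mutually inverse $*$-homomorphisms which are then automatically isometric) is the standard one, and every step, including the identification $\overline{\pi(\mathfrak{B}_2)}=\pi(\mathfrak{A}_2)$ and the passage of multiplicativity to the closure, checks out.
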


As a consequence of Proposition \ref{prop:isomorphism_of_C_star_algebras_via_isomorphism_between_core_subalgebras}, a similar result for $*$-homomorphisms also holds, and it is presented next.

\begin{proposition}\label{prop:homomorphism_of_C_star_algebras_via_homomorphism_between_core_subalgebras} Given $\mathfrak{A}_1$, $\mathfrak{A}_2$ C$^*$-algebras and $\mathfrak{B}_1 \subseteq \mathfrak{A}_1$ a dense core subalgebra. Every $*$-homomorphism $\alpha_\mathfrak{B}: \mathfrak{B}_1 \to \alpha_\mathfrak{B}(\mathfrak{B}_1) \subseteq \mathfrak{A}_2$ such that $\alpha_\mathfrak{B}(\mathfrak{B}_1)$ is a core subalgebra of $\overline{\alpha_\mathfrak{B}(\mathfrak{B}_1)}$ (norm induced by $\mathfrak{A}_2$) admits an extension $\alpha: \mathfrak{A}_1 \to \mathfrak{A}_2$ which is a (continuous) $*$-homomorphism.
\end{proposition}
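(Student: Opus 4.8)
The plan is to deduce the statement from Proposition \ref{prop:isomorphism_of_C_star_algebras_via_isomorphism_between_core_subalgebras} by a graph construction, since the given $*$-homomorphism $\alpha_\mathfrak{B}$ need not be injective and therefore cannot be fed into that proposition directly. Write $\mathfrak{C} := \overline{\alpha_\mathfrak{B}(\mathfrak{B}_1)} \subseteq \mathfrak{A}_2$, a C$^*$-subalgebra of which $\alpha_\mathfrak{B}(\mathfrak{B}_1)$ is a dense core subalgebra. I would form the C$^*$-direct sum $\mathfrak{A}_1 \oplus \mathfrak{C}$ and the graph $\Gamma := \{(b,\alpha_\mathfrak{B}(b)) : b \in \mathfrak{B}_1\}$, which is a $*$-subalgebra. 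The first-coordinate projection restricts to a $*$-\emph{isomorphism} $p_1 : \Gamma \to \mathfrak{B}_1$; injectivity is now automatic, which is precisely the point of passing to the graph.

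First I would check that $\Gamma$ is a dense core subalgebra of its closure $\overline{\Gamma} \subseteq \mathfrak{A}_1 \oplus \mathfrak{C}$. Density is immediate. For the core property, let $\Theta$ be any representation of $\Gamma$; then $\Theta \circ p_1^{-1}$ is a representation of $\mathfrak{B}_1$, hence continuous with respect to $\|\cdot\|_{\mathfrak{A}_1}$ because $\mathfrak{B}_1$ is a core subalgebra of $\mathfrak{A}_1$. Since $\|(b,\alpha_\mathfrak{B}(b))\|_{\mathfrak{A}_1 \oplus \mathfrak{C}} = \max\{\|b\|_{\mathfrak{A}_1},\|\alpha_\mathfrak{B}(b)\|_{\mathfrak{C}}\} \ge \|b\|_{\mathfrak{A}_1}$, continuity of $\Theta$ relative to $\|\cdot\|_{\mathfrak{A}_1}$ upgrades to continuity relative to the norm of $\overline{\Gamma}$, so $\Gamma$ is indeed a core subalgebra.

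Then $\mathfrak{B}_1$ and $\Gamma$ are $*$-isomorphic dense core subalgebras of $\mathfrak{A}_1$ and $\overline{\Gamma}$ respectively, so Proposition \ref{prop:isomorphism_of_C_star_algebras_via_isomorphism_between_core_subalgebras} will yield an isometric $*$-isomorphism $P_1 : \overline{\Gamma} \to \mathfrak{A}_1$ extending $p_1$; its inverse $P_1^{-1} : \mathfrak{A}_1 \to \overline{\Gamma}$ then extends $b \mapsto (b,\alpha_\mathfrak{B}(b))$. Composing with the contractive second-coordinate projection $p_2 : \overline{\Gamma} \to \mathfrak{C} \hookrightarrow \mathfrak{A}_2$, I set $\alpha := p_2 \circ P_1^{-1}$. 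This is a continuous $*$-homomorphism $\mathfrak{A}_1 \to \mathfrak{A}_2$, and on $\mathfrak{B}_1$ it satisfies $\alpha(b) = p_2(b,\alpha_\mathfrak{B}(b)) = \alpha_\mathfrak{B}(b)$, giving the required extension.

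The main obstacle is exactly the possible non-injectivity of $\alpha_\mathfrak{B}$, which forbids a direct appeal to the isomorphism result; the graph device removes it at the cost of re-verifying the core property for $\Gamma$. Alternatively, one can bypass Proposition \ref{prop:isomorphism_of_C_star_algebras_via_isomorphism_between_core_subalgebras} entirely: fixing a faithful representation $\mathfrak{A}_2 \subseteq \mathfrak{B}(\mathcal{H})$, the map $\alpha_\mathfrak{B}$ is itself a representation of $\mathfrak{B}_1$, so $\|\alpha_\mathfrak{B}(b)\|_{\mathfrak{A}_2} \le \|b\|_{\mathfrak{A}_1}$ by the core property of $\mathfrak{B}_1$, and one extends $\alpha_\mathfrak{B}$ by uniform continuity to all of $\mathfrak{A}_1$, the multiplicativity and $*$-compatibility passing to the limit by continuity of the product and involution.
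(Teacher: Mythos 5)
Your argument is correct, but it takes a genuinely different route from the paper. The paper removes the non-injectivity of $\alpha_\mathfrak{B}$ by passing to the quotient $\mathfrak{A}_1/I$ with $I=\overline{\ker\alpha_\mathfrak{B}}$, checks that $q(\mathfrak{B}_1)$ is a dense core subalgebra of $\mathfrak{A}_1/I$ which is $*$-isomorphic to $\alpha_\mathfrak{B}(\mathfrak{B}_1)$, applies Proposition \ref{prop:isomorphism_of_C_star_algebras_via_isomorphism_between_core_subalgebras}, and sets $\alpha=\widetilde{\alpha}_{\mathfrak{B}/I}\circ q$; you remove the non-injectivity by passing to the graph $\Gamma\subseteq\mathfrak{A}_1\oplus\mathfrak{C}$ instead, and your verification that $\Gamma$ is a core subalgebra of $\overline{\Gamma}$ (via $\Theta\circ p_1^{-1}$ and the inequality $\|(b,\alpha_\mathfrak{B}(b))\|\geq\|b\|_{\mathfrak{A}_1}$) is sound. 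Both proofs read Proposition \ref{prop:isomorphism_of_C_star_algebras_via_isomorphism_between_core_subalgebras} as producing an isomorphism that \emph{extends} the given one, which is how the paper itself uses it, so you are on equal footing there. What your route buys: you avoid having to check that $\overline{\ker\alpha_\mathfrak{B}}$ is a two-sided ideal of all of $\mathfrak{A}_1$ and that $q(\mathfrak{B}_1)$ inherits the core property, and — notably — you never use the hypothesis that $\alpha_\mathfrak{B}(\mathfrak{B}_1)$ is a core subalgebra of $\overline{\alpha_\mathfrak{B}(\mathfrak{B}_1)}$, which the paper's proof does invoke; your argument shows that hypothesis is superfluous. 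Your closing alternative (compose with a faithful representation of $\mathfrak{A}_2$, invoke the core property of $\mathfrak{B}_1$ to get boundedness, and extend by density) is even more economical and also dispenses with that hypothesis; the only imprecision there is the claim $\|\alpha_\mathfrak{B}(b)\|\leq\|b\|_{\mathfrak{A}_1}$ ``by the core property'' — the core property only gives $\|\alpha_\mathfrak{B}(b)\|\leq C\|b\|_{\mathfrak{A}_1}$ for some constant $C$, which suffices for the extension, and contractivity then follows a posteriori because the extension is a $*$-homomorphism between C$^*$-algebras.
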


\begin{proof}
    The main idea of the proof is the following. The set $I = \overline{\ker \alpha_\mathfrak{B}}$ is a closed two-sided ideal of the C$^*$-algebra $\mathfrak{A}_1$, and hence $\mathfrak{A}_1/I$ is a C$^*$-algebra. Let $q: \mathfrak{A}_1 \to \mathfrak{A}_1/I$ be the quotient map, which is a (continuous) $*$-homomorphism. We have that $\mathfrak{B}_1/I := q(\mathfrak{B}_1)$ is a dense core subalgebra of $\mathfrak{A}_1/I$. Via the choice for $I$, one can observe that $\mathfrak{B}_1/I$ is $*$-isomorphic to the (dense) core subalgebra $\alpha_\mathfrak{B}(\mathfrak{B}_1)$ of the C$^*$-algebra $\overline{\alpha_\mathfrak{B}(\mathfrak{B}_1)} \subseteq \mathfrak{A}_2$ via the $*$-isomorphism $\alpha_{\mathfrak{B}/I}:\mathfrak{B}_1/I \to \alpha_\mathfrak{B}(\mathfrak{B}_1)$, given by $\alpha_{\mathfrak{B}/I}(a + I) := \alpha_\mathfrak{B}(a)$. By Proposition \ref{prop:isomorphism_of_C_star_algebras_via_isomorphism_between_core_subalgebras}, $\alpha_{\mathfrak{B}/I}$ admits an extension $\widetilde{\alpha}_{\mathfrak{B}/I}: \mathfrak{A}_1/I \to \overline{\alpha_\mathfrak{B}(\mathfrak{B}_1)}$, and therefore one can show that the function $\alpha: \mathfrak{A}_1 \to \mathfrak{A}_2$ given by $\alpha := \widetilde{\alpha}_{\mathfrak{B}/I} \circ q$, where $q:\mathfrak{A}_1 \to \mathfrak{A}_1/I$ is the quotient map, is a $*$-homomorphism that extends $\alpha_\mathfrak{B}$. Summarizing, we proved the existence of the $*$-homomorphism $\alpha$ by constructing it in such a way that the diagram below commutes, where $i_1$, $i_2$ and $i_3$ are canonical inclusion maps.

    {\Large
    \[
        \begin{tikzcd}
            \mathfrak{A}_1 \arrow[r, "q"] \arrow[drrr, bend left = 60, "\alpha"]
            & \mathfrak{A}_1/I  \arrow[r, "\widetilde{\alpha}_{\mathfrak{B}/I}"] & \overline{\alpha(\mathfrak{B}_1)} \arrow[dr, "i_2"] & \\
            & & &  \mathfrak{A}_2\\
            \mathfrak{B}_1 \arrow[r, "q"] \arrow[uu, "i_1"] \arrow[rr, bend right, "\alpha_{\mathfrak{B}}"]
            & \mathfrak{B}_1/I  \arrow[r, "\alpha_{\mathfrak{B}/I}"] \arrow[uu, "i_3"] & \alpha(\mathfrak{B}_1) \arrow[uu, "i_2"]  \arrow[ur, "i_2"] &
        \end{tikzcd}
    \]}
\end{proof}

\begin{lemma}[Lemma 3.11 of \cite{BoaCasGonWyk2023}] \label{lemma:commutative_star_algebra_generated_by_projections_is_core} A commutative $*$-algebra generated by projections of a C$^*$-algebra $\mathfrak{A}$ is a core subalgebra of $\mathfrak{A}$.    
\end{lemma}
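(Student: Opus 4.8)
The plan is to show directly that any representation $\Theta\colon\mathfrak B\to\mathfrak B(\mathcal H)$ is contractive for the norm that $\mathfrak B$ inherits from $\mathfrak A$; being linear, such a $\Theta$ is then automatically continuous. Since $\Theta$ is a $*$-homomorphism and $\mathfrak A$ satisfies the C$^*$-identity, the problem reduces at once to self-adjoint elements: for arbitrary $b\in\mathfrak B$ one has
\[
\|\Theta(b)\|^2=\|\Theta(b)^*\Theta(b)\|=\|\Theta(b^*b)\|,
\]
and $b^*b\in\mathfrak B$ is self-adjoint, so a bound $\|\Theta(b^*b)\|\le\|b^*b\|_{\mathfrak A}=\|b\|_{\mathfrak A}^2$ yields the claim. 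Thus it suffices to prove $\|\Theta(b)\|\le\|b\|_{\mathfrak A}$ for self-adjoint $b$.

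Next I would finite-dimensionalize. A self-adjoint $b\in\mathfrak B$ is a $*$-polynomial in finitely many of the generating projections $p_1,\dots,p_n$, which pairwise commute since $\mathfrak B$ is commutative. Passing to the unitization $\widetilde{\mathfrak A}$ (or working in $\mathfrak A$ if it is already unital), the unital $*$-algebra $C$ generated by $1,p_1,\dots,p_n$ is spanned by the $2^n$ products $\prod_{i\in T}p_i$ over $T\subseteq\{1,\dots,n\}$, hence is a finite-dimensional, commutative, unital C$^*$-subalgebra, so $C\cong\mathbb C^k$. Its minimal projections are the \emph{atoms} $q_\epsilon:=\prod_{i=1}^n p_i^{\epsilon_i}$, $\epsilon\in\{0,1\}^n$, where $p_i^1=p_i$ and $p_i^0=1-p_i$: they are pairwise orthogonal, sum to $1$, and the nonzero ones form a basis of $C$. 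Writing $b=\sum_\epsilon\lambda_\epsilon q_\epsilon$ with $\lambda_\epsilon\in\mathbb R$, and using that the C$^*$-norm on $C$ is the restriction of that of $\mathfrak A$, one gets $\|b\|_{\mathfrak A}=\max\{|\lambda_\epsilon|:q_\epsilon\neq0\}$.

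Finally I would push $b$ through $\Theta$ and compare. Because $\Theta$ is a $*$-homomorphism, each $Q_\epsilon:=\Theta(q_\epsilon)$ is a projection and $Q_\epsilon Q_{\epsilon'}=\Theta(q_\epsilon q_{\epsilon'})=0$ for $\epsilon\neq\epsilon'$, so $\Theta(b)=\sum_\epsilon\lambda_\epsilon Q_\epsilon$ is a combination of pairwise orthogonal projections and hence $\|\Theta(b)\|=\max\{|\lambda_\epsilon|:Q_\epsilon\neq0\}$. Since $q_\epsilon=0$ forces $Q_\epsilon=0$, the set of indices with $Q_\epsilon\neq0$ is contained in that with $q_\epsilon\neq0$, whence $\|\Theta(b)\|\le\max\{|\lambda_\epsilon|:q_\epsilon\neq0\}=\|b\|_{\mathfrak A}$. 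The one genuine obstacle — and the only place the non-unital setting intervenes — is that the atom $q_{\mathbf 0}=\prod_i(1-p_i)$ need not lie in $\mathfrak B$, so $\Theta(q_{\mathbf 0})$ is a priori undefined and the identity $\Theta(b)=\sum_\epsilon\lambda_\epsilon Q_\epsilon$ must be justified. Every other atom ($\epsilon\neq\mathbf 0$) expands into a sum of nonempty products of the $p_i$ and so lies in $\mathfrak B$; and from $q_{\mathbf 0}=1-w$ with $w\in\mathfrak B$ one sees that $\lambda_{\mathbf 0}\,1=b-\sum_{\epsilon\neq\mathbf 0}\lambda_\epsilon q_\epsilon+\lambda_{\mathbf 0}w\in\mathfrak B$, so either $1\in\mathfrak B$ (and then $q_{\mathbf 0}\in\mathfrak B$ as well) or else $\lambda_{\mathbf 0}=0$. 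In both cases the term $\lambda_{\mathbf 0}q_{\mathbf 0}$ is harmless, and the computation of $\|\Theta(b)\|$ above is valid.
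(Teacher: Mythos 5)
The paper does not prove this lemma; it is quoted verbatim as Lemma 3.11 of \cite{BoaCasGonWyk2023}, and the argument in that reference is essentially the one you give: reduce to self-adjoint elements via the C$^*$-identity, decompose them as linear combinations of the mutually orthogonal ``atoms'' built from finitely many commuting generating projections, and observe that a $*$-homomorphism sends orthogonal projections to orthogonal projections, so the sup of the coefficients (i.e.\ the norm) can only decrease. Your proof is correct, and the one genuinely delicate point in the non-unital case --- that the atom $\prod_i(1-p_i)$ may fail to lie in $\mathfrak B$ unless its coefficient vanishes or $1\in\mathfrak B$ --- is identified and handled properly.
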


Given an irreducible matrix $A$, it holds by definition that its respective generalized Markov shift space $X_A$ is the spectrum of the commutative C$^*$-algebra $\mathcal{D}_A$, and hence the Gelfand representation theorem for commutative C$^*$-algebras gives $\mathcal{D}_A \simeq C_0(X_A)$. The next result presents an explicit construction of the $*$-isomorphism between them, by using the generalized cylinder sets, as a similar construction of the one involving the standard cylinders for Cuntz-Krieger algebras (see for instance \cite{KessStadStrat2007}).

\begin{theorem}[Gelfand isomorphism]\label{thm:isomorphism_C_0_X_A_and_D_A} The map $\widetilde{\pi}: \mathscr{B}_A \to \spann \, \mathcal{G}_A$, defined as the linear extension of the rule
    \begin{equation}\label{eq:rule_representation_C_0_X_A}
        \mathbbm{1}_{V_{\beta, F}} \mapsto e_{\beta,F},
    \end{equation}
    is a $*$-isomorphism. Moreover, it extends uniquely to a $*$-isomorphism $\pi: C_0(X_A) \to \mathcal{D}_A$.
\end{theorem}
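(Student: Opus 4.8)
The plan is to prove that $\widetilde{\pi}$ is a well-defined $*$-isomorphism of $*$-algebras onto $\spann \, \mathcal{G}_A$ and then upgrade it to the full isomorphism $\pi$ via the core-subalgebra machinery of Proposition \ref{prop:isomorphism_of_C_star_algebras_via_isomorphism_between_core_subalgebras}. First I would record that $\widetilde{\pi}$ respects the algebraic structure on generators: multiplicativity follows by directly matching the product formula for $\mathbbm{1}_{V_{\beta,F}}\mathbbm{1}_{V_{\gamma,G}}$ derived in the proof of Lemma \ref{lemma:span_U_beta_F_star_algebra} against the product formula for $e_{\beta,F}e_{\gamma,G}$ in Lemma \ref{lemma:products_D_A}. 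The two case-splits coincide term by term under the correspondence $\mathbbm{1}_{V_{\cdot,\cdot}} \leftrightarrow e_{\cdot,\cdot}$ (both give the $(\beta,F\cup G)$-generator when $\beta=\gamma$, the $(\beta,F)$-generator when $A(k,\beta_{|\gamma|})=1$ for all $k\in G$, and $0$ otherwise), so $\widetilde{\pi}(\mathbbm{1}_{V_{\beta,F}}\mathbbm{1}_{V_{\gamma,G}}) = e_{\beta,F}e_{\gamma,G}$. The involution is preserved because each $\mathbbm{1}_{V_{\beta,F}}$ is real-valued and each $e_{\beta,F}$ is a self-adjoint projection, linearity holds by construction, and surjectivity onto $\spann \, \mathcal{G}_A$ is immediate.

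The genuinely delicate point, and the one I expect to be the main obstacle, is \emph{well-definedness}: the indicators $\{\mathbbm{1}_{V_{\beta,F}}\}$ need not be linearly independent, so I must check that every linear relation among them is matched by the same relation among the $e_{\beta,F}$. The key identity is that the Gelfand transform of $e_{\beta,F}$ equals $\mathbbm{1}_{V_{\beta,F}}$. Viewing a point $\xi \in X_A$ as a character of $\mathcal{D}_A$ with $\xi_g = \xi(e_g)$, and writing $e_{\beta,F}$ as a product of the commuting projections $e_g$ (namely $e_{\beta,F}=e_\beta\prod_{j\in F,\, j\neq\beta_{|\beta|-1}} e_{\beta j^{-1}}$, with the obvious reading $e_{e,F}=\prod_{j\in F}e_{j^{-1}}$ when $\beta=e$), multiplicativity of $\xi$ gives $\xi(e_{\beta,F})=1$ precisely when $\xi_\beta=1$ and $\xi_{\beta j^{-1}}=1$ for all $j\in F$. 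The omitted case $j=\beta_{|\beta|-1}$ is automatic from connectedness (R2) once $\xi_\beta=1$, so this says exactly $\xi\in V_{\beta,F}$; hence $\xi(e_{\beta,F})=\mathbbm{1}_{V_{\beta,F}}(\xi)$ for every $\xi$.

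With this identity in hand, for any finite combination one has $\sum_i c_i\,\mathbbm{1}_{V_{\beta_i,F_i}} = \widehat{\sum_i c_i\, e_{\beta_i,F_i}}$ as functions on $X_A$. Since the Gelfand transform on the commutative C$^*$-algebra $\mathcal{D}_A$ is injective, $\sum_i c_i\,\mathbbm{1}_{V_{\beta_i,F_i}}=0$ if and only if $\sum_i c_i\, e_{\beta_i,F_i}=0$, which yields both well-definedness and injectivity of $\widetilde{\pi}$ at once. Combined with the first paragraph, this shows $\widetilde{\pi}:\mathscr{B}_A\to\spann\,\mathcal{G}_A$ is a $*$-isomorphism of $*$-algebras.

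Finally I would extend it. The algebra $\spann\,\mathcal{G}_A$ is a commutative $*$-subalgebra of $\mathcal{D}_A$ (Proposition \ref{Proposition algebra D_A}) generated by the projections $e_{\beta,F}$, and $\mathscr{B}_A$ is a commutative $*$-subalgebra of $C_0(X_A)$ (Lemma \ref{lemma:span_U_beta_F_star_algebra}) generated by the projections $\mathbbm{1}_{V_{\beta,F}}$; by Lemma \ref{lemma:commutative_star_algebra_generated_by_projections_is_core} each is a core subalgebra of its closure, and each is dense (in $\mathcal{D}_A$ by definition of $\mathcal{D}_A=\overline{\spann\,\mathcal{G}_A}$, and in $C_0(X_A)$ by the preceding proposition). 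Proposition \ref{prop:isomorphism_of_C_star_algebras_via_isomorphism_between_core_subalgebras} then promotes $\widetilde{\pi}$ to an isometric $*$-isomorphism $\pi:C_0(X_A)\to\mathcal{D}_A$, and uniqueness of the extension is forced by the density of $\mathscr{B}_A$ together with the continuity of $\pi$. The only subtlety to keep an eye on is confirming the product-lemma correspondence and the factorization of $e_{\beta,F}$ into the $e_g$, since these carry all the combinatorial weight of the argument.
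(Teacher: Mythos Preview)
Your argument is correct, and it takes a cleaner route to well-definedness than the paper does. The paper proceeds in the opposite order: it first checks that the rule $\mathbbm{1}_{V_{\beta,F}}\mapsto e_{\beta,F}$ is a well-defined bijection \emph{on the generating set} (by testing against points $\omega\in\Sigma_A$ via the inclusion \eqref{eq:inclusion_Sigma_A_in_X_A}, then passing to closures with the aid of Proposition~61 of \cite{BEFR2018}), matches the product formulae of Lemmas~\ref{lemma:span_U_beta_F_star_algebra} and~\ref{lemma:products_D_A} case by case, and only afterwards (in Remark~\ref{rmk:Gelfand_transform_D_A}) records the identity $\widehat{e_{\gamma,F}}=\mathbbm{1}_{V_{\gamma,F}}$ that you take as your starting point.

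Your route buys something nontrivial: having a product-preserving bijection between two families of commuting projections is \emph{not} in general enough to force the linear extension to be well-defined (e.g.\ $p_1=\mathbbm{1}_{\{1\}},\,p_2=\mathbbm{1}_{\{2\}},\,p_3=\mathbbm{1}_{\{1,2\}}$ in $C(\{1,2\})$ versus $q_1=\mathbbm{1}_{\{1\}},\,q_2=\mathbbm{1}_{\{2\}},\,q_3=\mathbbm{1}_{\{1,2,3\}}$ in $C(\{1,2,3\})$ share the same meet-semilattice, yet $p_1+p_2-p_3=0$ while $q_1+q_2-q_3\neq 0$). By identifying $\widetilde{\pi}^{-1}$ with the restriction of the Gelfand transform via the factorization $e_{\beta,F}=e_\beta\prod_{j\in F\setminus\{\beta_{|\beta|-1}\}}e_{\beta j^{-1}}$ and the defining relation $\xi_g=\xi(e_g)$, you get well-definedness and injectivity of the \emph{linear} map simultaneously, which is exactly the step the paper's proof leaves implicit. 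After that, both arguments coincide: the product check and the core-subalgebra extension via Lemma~\ref{lemma:commutative_star_algebra_generated_by_projections_is_core} and Proposition~\ref{prop:isomorphism_of_C_star_algebras_via_isomorphism_between_core_subalgebras} are the same.
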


\begin{proof} First, observe that the rule \eqref{eq:rule_representation_C_0_X_A} is well-defined as a function. In fact, let $(\beta,F), (\gamma, G) \in I_A$ such that $e_{\beta,F} \neq e_{\gamma,G}$, then w.l.o.g. there exists $\omega \in \Sigma_A$ satisfying $e_{\beta,F} \delta_{\omega} = \delta_{\omega}$ and $e_{\gamma,G} \delta_{\omega} = 0$. So, by definition of these operators, we have for $\beta$ admissible and $j \in F$ that
\begin{equation*}
    \Sigma_A \cap C_{\beta j^{-1}} = \bigsqcup_{k: A(j,k)=1} [\beta k] = [\beta] \cap \bigsqcup_{k: A(j,k)=1} \sigma^{-|\beta|}([k]) = [\beta] \cap \sigma^{-|\beta|}(\sigma([j])),
\end{equation*}
and similarly to $\gamma$ and $j \in G$, hence
\begin{equation*}
    \omega \in [\beta] \cap \bigcap_{j \in F}\sigma^{-|\beta|}(\sigma([j])) \quad \text{and} \quad \omega \notin [\gamma] \cap \bigcap_{k \in G}\sigma^{-|\gamma|}(\sigma([k])).
\end{equation*}
Then, $\omega \in  C_\beta \cap \bigcap_{j \in F}C_{\beta j^{-1}}$, and since $\omega \in \Sigma_A$ and
\begin{equation*}
    [\gamma] \cap \bigcap_{k \in G}\sigma^{-|\gamma|}(\sigma([k])) = \Sigma_A \cap C_\gamma \cap \bigcap_{k \in G}C_{\gamma k^{-1}},
\end{equation*}
one gets $\omega \notin C_\gamma \cap \bigcap_{k \in G}C_{\gamma k^{-1}}$. Therefore, $\mathbbm{1}_{V_{\beta, F}} (\omega) = 1$ and $\mathbbm{1}_{V_{\gamma,G}} (\omega) = 0$,
that is, $\mathbbm{1}_{V_{\beta, F}} \neq \mathbbm{1}_{V_{\gamma,G}}$, and the map \eqref{eq:rule_representation_C_0_X_A} is well-defined. Now we prove this map is a bijection. Indeed, it is straightforward that it is surjective, so it remains to prove its injection. Suppose that $e_{\beta,F} = e_{\gamma,G}$. Hence,
\begin{equation}\label{eq:intersection_cylinder_sigma_cylinder_equality}
    [\beta] \cap \bigcap_{j \in F}\sigma^{-|\beta|}(\sigma([j])) = [\gamma] \cap \bigcap_{k \in G}\sigma^{-|\gamma|}(\sigma([k])).
\end{equation}
Observe that
\begin{align}\label{eq:identity_V_gamma_F_on_Sigma_A}
    [\beta] \cap \bigcap_{j \in F}\sigma^{-|\beta|}(\sigma([j])) &= \bigcap_{j \in F} \bigsqcup_{\substack{p \in \mathbb{N} \\ A(j,p) = 1 }} [\beta p]  =  \bigsqcup_{\substack{p \in \mathbb{N} \\ A(j,p) = 1, \\ \forall j \in F }} [\beta p],
\end{align}
where in the case $\beta = e$ we set $[ep] = [p]$, and an analogous result is obtained for the RHS of \eqref{eq:intersection_cylinder_sigma_cylinder_equality}, we get
\begin{equation*}
    \bigsqcup_{\substack{p \in \mathbb{N} \\ A(j,p) = 1, \\ \forall j \in F }} [\beta p] = \bigsqcup_{\substack{q \in \mathbb{N} \\ A(k,q) = 1, \\ \forall k \in G }} [\gamma q].
\end{equation*}
By taking the closure (topology of $X_A$) in both sides above and using $[\alpha] = C_\alpha \cap \Sigma_A$, and hence $\overline{[\alpha]} = C_\alpha$, for every positive word $\alpha$, we obtain as a consequence of Proposition 61 in \cite{BEFR2018} that
\begin{equation*}
    C_\beta \cap \bigcap_{j \in F} C_{\beta j^{-1}} = \overline{\bigsqcup_{\substack{p \in \mathbb{N} \\ A(j,p) = 1, \\ \forall j \in F }} C_{\beta p}} = \overline{\bigsqcup_{\substack{q \in \mathbb{N} \\ A(k,q) = 1, \\ \forall k \in G }} C_{\gamma q}} = C_\gamma \cap \bigcap_{k \in G} C_{\gamma k^{-1}},
\end{equation*}
 Then, $V_{\beta,F} = V_{\gamma,G}$, that is,
\begin{equation*}
    \mathbbm{1}_{V_{\beta,F}} = \mathbbm{1}_{V_{\gamma,G}}
\end{equation*}
and the injectivity of \eqref{eq:rule_representation_C_0_X_A} is proved. By each case and subcase of Lemma \ref{lemma:span_U_beta_F_star_algebra} to its respective case and subcase in Lemma \ref{lemma:products_D_A}, we have that $\widetilde{\pi}$ preserves the product, and it is straightforward that the involution is also preserved, so we conclude that $\widetilde{\pi}$ is a $*$-isomorphism between $*$-algebras. It remains to prove that it can be extended to a $*$-isomorphism between $C_0(X_A)$ and $\mathcal{D}_A$. Indeed, it is straightforward from the case \textbf{(a)} and subcase \textit{(b3)} in the proof of Lemma \ref{lemma:span_U_beta_F_star_algebra} that $\mathscr{B}_A$ is generated by projections. Analogously, the contents of the proof of Lemma \ref{lemma:products_D_A} show that the elements $e_{\beta,F}$ are projections as well. Hence, by Lemma \ref{lemma:commutative_star_algebra_generated_by_projections_is_core}, both $\mathscr{B}_A$ and $\spann \, \mathcal{G}_A$ are dense core subalgebras of $C_0(X_A)$ and $\mathcal{D}_A$ respectively, we conclude by the proof of Proposition \ref{prop:isomorphism_of_C_star_algebras_via_isomorphism_between_core_subalgebras} in \cite{ExelGioGon2011} that $\widetilde{\pi}$ can be extended to an isometric $*$-isomorphism $\pi: C_0(X_A) \to \mathcal{D}_A$.
\end{proof}

\begin{remark} In \cite{BoaCasGonWyk2023}, G. Boava, G. G. de Castro, D. Gon\c{c}alves, and D. W. van Wyk constructed a notion of C$^*$-algebra $\widetilde{\mathcal{O}}_X$ associated to a subshifts. In particular, they proved in Proposition 3.13 a similar result of the theorem above for the Ott-Tomforde-Willis (OTW) subshifts $X^{OTW}$ \cite{OttTomfordeWillis2014}. Although there are some cases where $X^{OTW}$ and $X_A$ may coincide, this is not general. In particular, if $X_A$ has two distinct elements $\xi^1,\xi^2 \in Y_A$ with same stem and different roots, its respective OTW-subshift contains only one element $x$ which is a finite sequence and coincides with the stem of $\xi^1$ and $\xi^2$. This occurs, for instance, when $A$ is the pair renewal shift transition matrix and finite words ending in $1$.
\end{remark}

Given a commutative C$^*$-algebra $\mathfrak{A}$, its Gelfand spectrum, denoted by $\widehat{\mathfrak{A}}$, and $a \in \mathfrak{A}$, we denote by $\widehat{a} \in C_0(\widehat{ \mathfrak{A}})$ the Gelfand transformation of $a$, that is, $\widehat{a}(\eta) = \eta(a)$, $\eta \in \widehat{\mathfrak{A}}$.

\begin{remark}\label{rmk:Gelfand_transform_D_A} Note that, for every $\omega \in \Sigma_A$ and $(\gamma,F) \in I_A$, we have by \eqref{eq:inclusion_Sigma_A_in_X_A} that, when $\gamma\neq e$,
\begin{equation*}
    \widehat{e_{\gamma, F}}(\varphi_{\omega}) = \varphi_{\omega}(e_{\gamma, F}) = \left\langle e_{\gamma, F}\delta_\omega, \delta_\omega\right\rangle  = \begin{cases}
        1, \, \text{if } \omega \in [\gamma] \cap \bigcap_{j \in F}\sigma^{-|\gamma|}(\sigma([j])),\\
        0, \, \text{otherwise,}
      \end{cases}
\end{equation*}
and in the case that $\gamma=e$,
\begin{equation*}
    \widehat{e_{e, F}}(\varphi_{\omega}) = \varphi_{\omega}(e_{e, F}) = \left\langle e_{e, F}\delta_\omega, \delta_\omega\right\rangle  = \begin{cases}
        1, \, \text{if } \omega \in \bigcap_{j \in F}\sigma([j]),\\
        0, \, \text{otherwise.}
      \end{cases}
\end{equation*}
Then, by \eqref{eq:identity_V_gamma_F_on_Sigma_A} and the continuity of the characters, we conclude that
\begin{equation}\label{eq:evaluating_e_on_varphi_omega_via_characteristic_functions}
    \widehat{e_{\gamma, F}} = \mathbbm{1}_{V_{\gamma,F}}.
\end{equation}
In other words, $\pi^{-1}$ is the Gelfand transform.
\end{remark}

The identity \eqref{eq:evaluating_e_on_varphi_omega_via_characteristic_functions} is very useful and will be applied several times in this work.

\section{Weighted endomorphisms} \label{sec:weighted_endomorphisms}\\

In this section we briefly present the notion of weighted endomorphism and its partial dual map, as it was studied by B. K. Kwa\'sniewski and A. Lebedev \cite{KwaLeb2020} and some of their results. These notions are our central object of study in this paper, in the context of generalized Markov shifts. We present a function $\alpha_0:\mathcal{G}_A \to \mathfrak{B}(\ell^2(\Sigma_A))$ that can be extended to a $*$-endomorphism of $C_0(X_A)$ whose partial map dual to it is the continuous extension shift map to $X_A$ precisely when this one exists. In general, the shift map is defined on the open subset of non-empty words, and it does not have a continuous extension on the whole space.

In order to briefly explain the notion of weighted endomorphism, we start from a proposition that we present next. It connects endomorphisms on commutative Banach algebras and dynamics partially defined on the algebra's spectrum, in the sense that with the domain of the dynamical map is a clopen subset of the spectrum. Such a result was proved by B. K. Kwa\'sniewski and A. V. Lebedev.

\begin{proposition}[{\cite[Proposition 1.1.]{KwaLeb2020}}]\label{proposicion partial map en A}
    Let endomorphisms be $\alpha:\mathfrak{A}\to \mathfrak{A}$ of a commutative Banach algebra $\mathfrak{A}$ with a unit. There is a uniquely determined partial continuous map $\varphi: \Delta \to X$ on algebra's spectrum $X$ defined on a clopen subset $\Delta\subseteq X$  such that
    \begin{equation}\label{ecuacion partial map dual de alpha A}
        \widehat{\alpha(a)}(x)=
        \left\{ \begin{array}{lcc}
             \widehat{a}(\varphi(x)), & x\in\Delta, \\
             0, & x\notin\Delta,
             \end{array}
   \right. \quad  a\in \mathfrak{A}.
    \end{equation}
    Moreover, $\Delta=X$ iff $\alpha$ preserves the unit of $\mathfrak{A}$.
\end{proposition}  

The result above establishes a notion of dynamical duality between the algebraic and analytic counterparts of the objects, algebra and topological space, of Proposition \ref{proposicion partial map en A}, which motivates the following definition. 

\begin{definition}\label{definicion partial map primero}
    We call the map $\varphi: \Delta \subseteq X \to X$ satisfying \eqref{ecuacion partial map dual de alpha A} the \textit{partial dynamics dual to the endomorphism} $\alpha$. In the global case, i.e., when $\Delta = X$, we refer to $\varphi: X \to X$ as the \textit{dynamics dual to the endomorphism} $\alpha$.

\end{definition}

We emphasize that under certain conditions, from a shift operator, we can obtain an endomorphism that can be expressed in terms of the operator (see Proposition \ref{Corollary alpha definida como T.S es un endomorfismo}). On the other hand, given a contractive endomorphism, we can construct a shift operator while maintaining the relationship (see Proposition \ref{proposicion creacion de los aT a partir del endomorfismo}).

\begin{definition}
Let $T$ be an operator on a Banach space $E$ and $\mathfrak{B}(E)$ the algebra of bounded operators on $E$.  We say that $T$ is a \emph{partial isometry} if it is a contraction and there is a contraction $S \in \mathfrak{B}(E)$ such that
\[ TST = T, \quad STS = S. \]
Contraction operators $T$ and $S$ satisfying the above relations are called \emph{mutually adjoint partial isometries}. For a subset $M\subseteq\mathfrak{B}(E)$ we denote by $M'$ its commutant, that is $M'=\{a\in\mathfrak{B}(E):ba=ab \text{ for each }b\in M\}$.
\end{definition}

\begin{proposition}[{\cite[Proposition 2.9. and Corollary 2.10.]{KwaLeb2020}}]\label{Corollary alpha definida como T.S es un endomorfismo}
Let $\mathfrak{A} \subseteq \mathfrak{B}(E)$ be an algebra, and $T$ and $S$ be mutually adjoint partial isometries such that
\[
T\mathfrak{A}S \subseteq \mathfrak{A}, \quad ST \in \mathfrak{A}'.
\]
Then the map $\alpha: \mathfrak{A} \to \mathfrak{A}$, defined as $\alpha(a):=TaS$ for every $a\in \mathfrak{A}$, is an endomorphism of $\mathfrak{A}$.
\end{proposition}

\begin{definition}
Let $E$ be a Banach space. Suppose that $\mathfrak{A} \subseteq \mathfrak{B}(E)$ is a unital algebra containing the same unit of $\mathfrak{B}(E)$, and let $T \in \mathfrak{B}(E)$ be a partial isometry which admits an adjoint partial isometry $S$ satisfying
\[
T\mathfrak{A}S \subseteq \mathfrak{A}, \quad ST \in \mathfrak{A}'.
\]
So that $\alpha: \mathfrak{A} \to \mathfrak{A}$ given by $\alpha(a):= TaS$, $a \in \mathfrak{A}$, is an endomorphism of $\mathfrak{A}$. We call operators of the form $aT$, $a \in \mathfrak{A}$, \emph{weighted shift operators} associated with the endomorphism $\alpha$.
\end{definition}
\begin{proposition}[{\cite[Proposition 2.17.]{KwaLeb2020}}]\label{proposicion creacion de los aT a partir del endomorfismo}
Let $\mathfrak{A}$ be a unital Banach algebra and $\alpha: \mathfrak{A} \to \mathfrak{A}$ be a contractive endomorphism. Then there is a Banach space $E$, a unital isometric homomorphism $\Upsilon : \mathfrak{A} \to \mathfrak{B}(E)$, and mutually adjoint partial isometries $S, T \in \mathfrak{B}(E)$ such that $\Upsilon(\alpha(a)) = T\Upsilon(a)S$ for $a \in \mathfrak{A}$ and $ST \in \Upsilon(\mathfrak{A})'$. Thus, for each $a \in \mathfrak{A}$, the operator $\Upsilon(a)T$ is an abstract weighted shift associated with the endomorphism (isometrically conjugated with) $\alpha$.
\end{proposition}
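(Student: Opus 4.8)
The plan is to realize $\alpha$ spatially as a conjugation $a\mapsto T\,\Upsilon(a)\,S$ by mutually adjoint partial isometries on a Fock-type Banach space assembled from the forward orbit of $\alpha$. Write $p_n:=\alpha^n(1)$ for $n\ge 0$; since $\alpha$ is a contractive homomorphism, each $p_n$ is an idempotent with $\|p_n\|\le 1$, and a direct computation using that each $\alpha^k$ is a homomorphism gives $p_np_m=p_{\max(n,m)}$, so the $p_n$ form a commuting, decreasing family of idempotents. Let $p_n\mathfrak{A}$ be the closed range of left multiplication by $p_n$, and set $E:=\bigoplus_{n\ge 0}p_n\mathfrak{A}$, the $c_0$-direct sum with norm $\|(x_n)_n\|=\sup_n\|x_n\|$, a Banach space. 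I would define the representation diagonally through the powers of $\alpha$,
\[
 \Upsilon(a)(x_n)_{n\ge 0}:=\bigl(\alpha^n(a)\,x_n\bigr)_{n\ge 0}.
\]
Because $\alpha^n(1)x_n=p_nx_n=x_n$ on $p_n\mathfrak{A}$, the map $\Upsilon$ preserves $E$, is multiplicative, and is \emph{unital}, since $\Upsilon(1)$ acts as $x_n\mapsto p_nx_n=x_n$, i.e.\ $\Upsilon(1)=I_E$. Isometry follows from $\|\alpha^n(a)\|\le\|a\|$ (upper bound) and evaluation at $(1,0,0,\dots)$ (lower bound).

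Next I would introduce the shifts
\[
 T(y_n)_{n\ge 0}:=(y_{n+1})_{n\ge 0},\qquad (Sx)_0:=0,\quad (Sx)_n:=p_n\,x_{n-1}\ (n\ge 1).
\]
The relation $p_np_{n+1}=p_{n+1}$ ensures $T$ maps $E$ into $E$, and the factor $p_n$ in $S$ is precisely what forces $(Sx)_n\in p_n\mathfrak{A}$; both are contractions. One checks that $ST$ is the idempotent annihilating the zeroth coordinate and that $TS$ is multiplication by $p_{n+1}$ on the $n$-th slot, whence $TST=T$ and $STS=S$, so $T,S$ are mutually adjoint partial isometries. The crux is the intertwining identity: for $x\in E$,
\[
 \bigl(T\,\Upsilon(a)\,S\,x\bigr)_n=\alpha^{n+1}(a)\,p_{n+1}\,x_n=\alpha^{n+1}(a)\,x_n=\bigl(\Upsilon(\alpha(a))\,x\bigr)_n,
\]
using $\alpha^{n+1}(a)p_{n+1}=\alpha^{n+1}(a\cdot 1)=\alpha^{n+1}(a)$; thus $T\Upsilon(a)S=\Upsilon(\alpha(a))$. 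Since $ST$ merely kills the zeroth coordinate while $\Upsilon(a)$ acts diagonally, $ST$ commutes with every $\Upsilon(a)$, giving $ST\in\Upsilon(\mathfrak{A})'$, and the same computation at $a=1$ yields $TS=\Upsilon(\alpha(1))$.

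The delicate point — and the one I expect to be the main obstacle — is reconciling the demand that $\Upsilon$ be \emph{unital} ($\Upsilon(1)=I_E$) with the fact that $\alpha$ need not preserve the unit, so that $\Upsilon(\alpha(1))=TS$ is forced to be a proper idempotent rather than $I_E$. The naive choice $E=\bigoplus_n\mathfrak{A}$ with the same diagonal representation fails exactly here, since there $\Upsilon(1)=\bigoplus_np_n\neq I_E$. Passing to the corner spaces $p_n\mathfrak{A}$, together with compressing the right shift by $p_n$, is the device that repairs unitality while leaving all algebraic relations intact; checking that these compressions do not disturb $TST=T$, $STS=S$ and the intertwining is the technical heart of the argument. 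With the four relations in hand the closing assertion is immediate: $\Upsilon$ identifies $\mathfrak{A}$ isometrically with $\Upsilon(\mathfrak{A})\subseteq\mathfrak{B}(E)$, on which $a'\mapsto Ta'S$ is an endomorphism by the earlier characterization of endomorphisms of this form (using $T\Upsilon(\mathfrak{A})S=\Upsilon(\alpha(\mathfrak{A}))\subseteq\Upsilon(\mathfrak{A})$ and $ST\in\Upsilon(\mathfrak{A})'$), so that each $\Upsilon(a)T$ is by definition a weighted shift operator associated with this endomorphism, which is isometrically conjugate to $\alpha$.
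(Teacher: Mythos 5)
The paper does not prove this statement: it is quoted verbatim as Proposition 2.17 of \cite{KwaLeb2020}, so there is no in-paper argument to compare against. Your construction is correct and is essentially the standard one from that reference: all the verifications check out (in particular $p_np_m=p_{\max(n,m)}$ makes $T$ and $S$ well defined on $E=\bigoplus_n p_n\mathfrak{A}$, the relations $TST=T$, $STS=S$, $T\Upsilon(a)S=\Upsilon(\alpha(a))$ and $ST\in\Upsilon(\mathfrak{A})'$ all follow by the coordinatewise computations you give, and passing to the corners $p_n\mathfrak{A}$ is exactly what restores $\Upsilon(1)=I_E$), modulo the harmless normalization $\|1\|=1$ needed for the lower isometry bound at $(1,0,0,\dots)$.
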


Next, we present a function $\alpha_0$ on the generators of $\mathcal{D}_A$ which, under appropriate circumstances, can be extended to an $*$-endomorphism on $\mathcal{D}_A$.

\begin{definition}\label{Definition: endomorfismo alpha_0}
    For an irreducible transition matrix $A$, we define the function $\alpha_0: \mathcal{G}_A \to \mathfrak{B}(\ell^2(\Sigma_A))$ given by
    \begin{equation}\label{ecuacion de alpha0 para el endomorpismo general}
        \alpha_0(e_{\gamma, F})\delta_\omega = \sum_{i\in \mathbb{N}}A(i,\omega_1) e_{i\gamma, F} \delta_\omega = e_{\omega_0 \gamma, F} \delta_\omega.
    \end{equation} 
 where $\{\delta_{\omega}\}_{\omega\in \Sigma_A}$ is the canonical basis of $\mathfrak{B}(\ell^2(\Sigma_A))$.  
\end{definition}

Observe that it may happen that $(\omega_0 \gamma,F) \notin I_A$, and in this case $e_{\omega_0 \gamma, F} = 0$, see Remark \ref{rmk:trivial_terms_projections}. 

\begin{remark}\label{Remark: alpha_0 para gamma diferente a e}
    Alternatively, for $e_{\gamma,F} \in \mathcal{G}_A$, we can write identity \eqref{ecuacion de alpha0 para el endomorpismo general} without the explicit use of the canonical basis. If $\gamma \neq e$, we can rewrite it as
    \begin{equation}\label{ecuacion de alpha0 para el endomorpismo caso gamma no vacia}
        \alpha_0(e_{\gamma, F}) = \sum_{i \in \mathbb{N}} A(i, \gamma_0) e_{i\gamma, F}.
    \end{equation}
    On the other hand, for $\gamma = e$, a direct calculation gives
    \begin{equation*}
        e_{e,F} = \sum_{\substack{k: A(j,k) = 1, \\ \forall j \in F}} e_{k,\varnothing}.
    \end{equation*}
    In fact, let $\omega \in \Sigma_A$. We have by Proposition 61 of \cite{BEFR2018} that 
    \begin{align*}
        e_{e,F} \delta_{\omega} &= \delta_\omega \mathbbm{1}_{V_{e,F}}(\omega) = \delta_\omega \mathbbm{1}_{V_{e,F} \cap \Sigma_A}(\omega) = \sum_{\substack{k: A(j,k) = 1, \\ \forall j \in F}}\delta_\omega \mathbbm{1}_{[k]}(\omega) = \sum_{\substack{k: A(j,k) = 1, \\ \forall j \in F}} e_{k,\varnothing} \delta_\omega.
    \end{align*}
    And hence,
    \begin{equation}\label{ecuacion de alpha0 para el endomorpismo caso gamma vacia}
        \alpha_0(e_{e, F}) = \sum_{i \in \mathbb{N}} \sum_{\substack{k: A(j,k) = 1, \\ \forall j \in F}} A(i, k) e_{ik, \varnothing} = \sum_{i \in \mathbb{N}} \sum_{k \in \mathbb{N}} A(i, k) \left(\prod_{j \in F} A(j,k)  \right)e_{ik, \varnothing}.
    \end{equation}
    The reader can check that formulae \eqref{ecuacion de alpha0 para el endomorpismo caso gamma no vacia} and \eqref{ecuacion de alpha0 para el endomorpismo caso gamma vacia} coincide with \eqref{ecuacion de alpha0 para el endomorpismo general} when $\gamma \neq e$ and $\gamma = e$ respectively, by evaluating them for any given $\delta_{\omega}$, $\omega \in \Sigma_A$. In addition, $\alpha_0(e_{\gamma,F})$ is a projection in $\mathfrak{B}(\ell^2(\Sigma_A))$ for every $e_{\gamma,F} \in \mathcal{G}_A$.

\end{remark}


\begin{proposition}\label{Propo: relationship by alpha and sigma para x in Sigma_A}
    Suppose that there exists a $*$-endomorphism $\alpha: \mathcal{D}_A\to\mathcal{D}_A$ that extends $\alpha_0$ (and hence $\alpha_0(\mathcal{G}_A) \subseteq \mathcal{D}_A$). Then, $\widehat{\alpha(a)}(\varphi_\omega)=\widehat{a}(\sigma(\varphi_\omega))$ for each $a\in\mathcal{D}_A$ and for every $\varphi_\omega\in i_1(\Sigma_A)$.
\end{proposition}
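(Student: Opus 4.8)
The plan is to verify the identity $\widehat{\alpha(a)}(\varphi_\omega) = \widehat{a}(\sigma(\varphi_\omega))$ by first checking it on the generators $e_{\gamma,F} \in \mathcal{G}_A$, then extending to all of $\mathcal{D}_A$ by linearity and continuity. Since $\alpha$ is a $*$-endomorphism extending $\alpha_0$, and both $\widehat{\cdot}(\varphi_\omega)$ and $\widehat{\cdot}(\sigma(\varphi_\omega))$ are characters (hence continuous linear functionals), it suffices to establish the identity on the dense $*$-subalgebra $\spann\,\mathcal{G}_A$; and because both sides are linear in $a$, it is enough to treat $a = e_{\gamma,F}$.

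So let me sketch the verification on a generator. First I would compute the left-hand side. Using the explicit formula $\alpha_0(e_{\gamma,F})\delta_\omega = e_{\omega_0\gamma,F}\delta_\omega$ from \eqref{ecuacion de alpha0 para el endomorpismo general}, together with the inclusion map \eqref{eq:inclusion_Sigma_A_in_X_A}, I get
\begin{equation*}
    \widehat{\alpha(e_{\gamma,F})}(\varphi_\omega) = \varphi_\omega(\alpha(e_{\gamma,F})) = \left\langle \alpha_0(e_{\gamma,F})\delta_\omega, \delta_\omega \right\rangle = \left\langle e_{\omega_0\gamma,F}\delta_\omega, \delta_\omega\right\rangle.
\end{equation*}
By Remark \ref{Rem:How_act_operator_e_gamma,F}, this inner product equals $1$ precisely when $\omega \in [\omega_0\gamma]$ (automatic that $\omega_0$ matches) with $A(i,\omega_{|\omega_0\gamma|})=1$ for all $i \in F$, i.e.\ when $\sigma(\omega) \in [\gamma]$ and the root condition on $F$ holds at the appropriate coordinate, and $0$ otherwise.

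For the right-hand side, the key observation is that for $\omega \in \Sigma_A$ the shifted configuration $\sigma(\varphi_\omega)$ is exactly $\varphi_{\sigma(\omega)}$; this follows from the definition $\sigma(\xi)_g = \xi_{\kappa(\xi)_0^{-1}g}$ applied to a full sequence, whose stem is $\omega$ itself. Hence $\widehat{e_{\gamma,F}}(\sigma(\varphi_\omega)) = \widehat{e_{\gamma,F}}(\varphi_{\sigma(\omega)}) = \langle e_{\gamma,F}\delta_{\sigma(\omega)}, \delta_{\sigma(\omega)}\rangle$, which by Remark \ref{Rem:How_act_operator_e_gamma,F} is $1$ exactly when $\sigma(\omega) \in [\gamma]$ with $A(i,(\sigma\omega)_{|\gamma|})=1$ for all $i\in F$. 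Comparing with the left-hand side and noting that $(\sigma\omega)_{|\gamma|} = \omega_{|\gamma|+1} = \omega_{|\omega_0\gamma|}$, the two conditions coincide, so the generator identity holds. The main (and essentially only) obstacle is bookkeeping: carefully tracking the index shift in the root/column condition on $F$, and confirming the clean identification $\sigma(\varphi_\omega) = \varphi_{\sigma(\omega)}$ on $\Sigma_A$ (so that $\varphi_\omega$ indeed lies in the domain $\Sigma_A \sqcup F_A$ of $\sigma$, which is immediate since $\omega$ is an infinite sequence). Once the generators are handled, I would finish by invoking linearity to extend to $\spann\,\mathcal{G}_A$ and then density together with the continuity of the characters $\widehat{\cdot}(\varphi_\omega)$ and $\widehat{\cdot}(\sigma(\varphi_\omega))$ to conclude the identity for every $a \in \mathcal{D}_A$.
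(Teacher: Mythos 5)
Your proposal is correct and follows essentially the same route as the paper: verify the identity on the generators $e_{\gamma,F}$ via the explicit action $\alpha_0(e_{\gamma,F})\delta_\omega = e_{\omega_0\gamma,F}\delta_\omega$ and the identification $\sigma(\varphi_\omega)=\varphi_{\sigma(\omega)}$, then extend by linearity to $\spann\,\mathcal{G}_A$ and by density and continuity of the characters to all of $\mathcal{D}_A$. The only cosmetic difference is that you evaluate the inner products through Remark \ref{Rem:How_act_operator_e_gamma,F} directly, whereas the paper routes the same computation through the characteristic functions $\mathbbm{1}_{V_{\gamma,F}}$ and the cylinder decomposition of $V_{\gamma,F}\cap\Sigma_A$.
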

\begin{proof}
The identities \eqref{ecuacion de alpha0 para el endomorpismo general}, \eqref{eq:inclusion_Sigma_A_in_X_A}, \eqref{eq:evaluating_e_on_varphi_omega_via_characteristic_functions}, and Proposition 61 of \cite{BEFR2018} give
    \begin{align}\label{eq:accounting_composition_alpha_generator_new}
    \widehat{\alpha(e_{\gamma , F})}(\varphi_\omega)&= \left\langle \alpha(e_{\gamma , F}) \delta_\omega, \delta_\omega\right\rangle = \left\langle e_{\omega_0 \gamma, F} \delta_\omega, \delta_\omega\right\rangle
    = \mathbbm{1}_{V_{\omega_0 \gamma,F}}(\omega) = \mathbbm{1}_{V_{\omega_0\gamma,F} \cap \Sigma_A}(\omega) \\
    &= \sum_{\substack{k: A(j,k)=1,\\ \forall j \in F}} \mathbbm{1}_{[\omega_0 \gamma k]}(\omega) = \sum_{\substack{k: A(j,k)=1,\\ \forall j \in F}}\mathbbm{1}_{[ \gamma k]}(\sigma(\omega)) = \sum_{\substack{k: A(j,k)=1,\\ \forall j \in F}}\mathbbm{1}_{V_{\gamma k,\varnothing}}(\sigma(\omega))\nonumber \\
    &= \mathbbm{1}_{V_{\gamma ,F}}(\sigma(\omega)) = \widehat{e_{\gamma , F}}(\sigma(\varphi_\omega)). \nonumber
\end{align}

By linearity of $\alpha$, we also obtain $\widehat{\alpha(a)}(\varphi_\omega)=\widehat{a}(\sigma(\varphi_\omega))$ for every element $a \in \spann \, \mathcal{G}_A$.
Now, take $\{a_n\}_{n\in\mathbb{N}}$, $a_n \in \spann \, \mathcal{G}_A$, a sequence converging to some $a \in \mathcal{D}_A$. By the continuity of $\alpha$, $\sigma$ restricted to $\Sigma_A$, and $\varphi_\omega\in i_1(\Sigma_A)$, we have that
\begin{align*}
    \widehat{\alpha(a)}(\varphi_\omega) &= \widehat{\alpha\left(\lim_{n\to\infty}a_n\right)}(\varphi_\omega) =
    \lim_{n\to\infty}\widehat{\alpha(a_n)}(\varphi_\omega) = \lim_{n\to\infty} \widehat{a_n}(\sigma(\varphi_\omega)) 
    = \widehat{a}(\sigma(\varphi_\omega)).
\end{align*}
\end{proof}

The next theorem shows that $\alpha$ is a $*$-endomorphism on $\mathcal{D}_A$ precisely when $\sigma$ admits a continuous extension to $X_A$.

\begin{theorem}\label{Theorem: alpha well-defined iff sigma extended}
    Let $A$ be an irreducible transition matrix for which the Exel-Laca algebra $\mathcal{O_A}$ is unital.  The following statements are equivalent:
    \begin{enumerate}
        \item $\alpha_0(\mathcal{G}_A) \subseteq \mathcal{D}_A$;
        \item There exists a $*$-endomorphism $\alpha$ on $\mathcal{D}_A$ which extends $\alpha_0$.
        \item $\sigma: \Sigma_A\sqcup F_A\to X_A$ is continuously extended in all $X_A$.
    \end{enumerate}
\end{theorem}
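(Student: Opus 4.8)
The plan is to establish the cycle of implications $(1)\Rightarrow(2)\Rightarrow(3)\Rightarrow(1)$, noting that $(2)\Rightarrow(1)$ is immediate, since an endomorphism extending $\alpha_0$ in particular sends each generator $e_{\gamma,F}$ into $\mathcal{D}_A$. The backbone of every step is the diagonal description of $\alpha_0$: since each $e_{\gamma,F}$ acts diagonally on the basis $\{\delta_\omega\}$ with $\langle e_{\gamma,F}\delta_\omega,\delta_\omega\rangle = \mathbbm{1}_{V_{\gamma,F}}(\varphi_\omega)$, the computation already carried out in \eqref{eq:accounting_composition_alpha_generator_new} shows that $\langle \alpha_0(e_{\gamma,F})\delta_\omega,\delta_\omega\rangle = \mathbbm{1}_{V_{\omega_0\gamma,F}}(\varphi_\omega) = \mathbbm{1}_{V_{\gamma,F}}(\sigma(\varphi_\omega))$ for every $\omega\in\Sigma_A$. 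In other words, $\alpha_0(e_{\gamma,F})$ is the diagonal operator whose $\omega$-entry is $\widehat{e_{\gamma,F}}(\sigma(\varphi_\omega))$. I would record this identity once and reuse it throughout.

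For $(1)\Rightarrow(2)$, I would first extend $\alpha_0$ linearly to $\spann\,\mathcal{G}_A$ and observe that it is automatically a well-defined $*$-homomorphism there: by the diagonal description, $\alpha_0(a)$ is the diagonal operator with entries $\widehat a(\sigma(\varphi_\omega))$, so well-definedness, linearity, multiplicativity, and $*$-preservation all follow from the facts that $a\mapsto \widehat a$ is a $*$-homomorphism and that evaluation at the point $\sigma(\varphi_\omega)\in X_A$ is multiplicative. Under hypothesis (1) the image $\alpha_0(\spann\,\mathcal{G}_A)$ lies in $\mathcal{D}_A$ and is a commutative $*$-algebra generated by the projections $\alpha_0(e_{\gamma,F})$ (Remark \ref{Remark: alpha_0 para gamma diferente a e}), hence a core subalgebra of its closure by Lemma \ref{lemma:commutative_star_algebra_generated_by_projections_is_core}; likewise $\spann\,\mathcal{G}_A$ is a dense core subalgebra of $\mathcal{D}_A$ by construction and the same lemma. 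Proposition \ref{prop:homomorphism_of_C_star_algebras_via_homomorphism_between_core_subalgebras} then produces the desired $*$-endomorphism $\alpha:\mathcal{D}_A\to\mathcal{D}_A$ extending $\alpha_0$.

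For $(2)\Rightarrow(3)$ I would invoke Proposition \ref{proposicion partial map en A} for the unital commutative C$^*$-algebra $\mathcal{D}_A$ (unital because $\mathcal{O}_A$ is, so that $X_A$ is compact): it yields a continuous partial map $\varphi$ defined on a clopen $\Delta\subseteq X_A$ dual to $\alpha$. The first task is to show $\Delta=X_A$, which amounts to checking that $\alpha$ is unital; by Proposition \ref{Propo: relationship by alpha and sigma para x in Sigma_A}, $\widehat{\alpha(1)}$ agrees with $\widehat 1\equiv 1$ on the dense set $\Sigma_A$, and being continuous it equals $1$ throughout, so $\alpha(1)=1$ and $\Delta=X_A$. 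Comparing the duality relation $\widehat{a}(\varphi(\varphi_\omega))=\widehat{\alpha(a)}(\varphi_\omega)$ with Proposition \ref{Propo: relationship by alpha and sigma para x in Sigma_A} gives $\widehat a(\varphi(\varphi_\omega))=\widehat a(\sigma(\varphi_\omega))$ for all $a\in\mathcal{D}_A$; since $C(X_A)$ separates points, $\varphi=\sigma$ on $\Sigma_A$. Finally, $\varphi$ and $\sigma$ are both continuous on the open set $\Sigma_A\sqcup F_A$ (recall $\sigma$ is a local homeomorphism there, Remark \ref{rmk:preimages_generating_Y_A}) and agree on the subset $\Sigma_A$, which is dense in $X_A$ and hence in $\Sigma_A\sqcup F_A$, so they coincide on $\Sigma_A\sqcup F_A$; thus $\varphi$ is a continuous extension of $\sigma$ to all of $X_A$. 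I expect this implication to be the main obstacle, since it is where the hypothesis that $\mathcal{O}_A$ is unital is genuinely used and where one must upgrade the pointwise agreement on $\Sigma_A$ to a global continuous extension.

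For $(3)\Rightarrow(1)$, let $\overline\sigma:X_A\to X_A$ denote the continuous extension. Each $\mathbbm{1}_{V_{\gamma,F}}$ lies in $C(X_A)$ (the sets $V_{\gamma,F}$ are clopen), so $f:=\mathbbm{1}_{V_{\gamma,F}}\circ\overline\sigma\in C(X_A)$ and $\pi(f)\in\mathcal{D}_A$, where $\pi$ is the Gelfand isomorphism of Theorem \ref{thm:isomorphism_C_0_X_A_and_D_A}. Using that every element of $\mathcal{D}_A$ acts diagonally with $\pi(f)\delta_\omega=f(\varphi_\omega)\delta_\omega$, together with $\overline\sigma(\varphi_\omega)=\varphi_{\sigma(\omega)}$ and the identity recorded in the first paragraph, I would check that $\pi(f)\delta_\omega=\mathbbm{1}_{V_{\gamma,F}}(\sigma(\varphi_\omega))\delta_\omega=\alpha_0(e_{\gamma,F})\delta_\omega$ for every $\omega\in\Sigma_A$. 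Since both operators are bounded and agree on the orthonormal basis, $\alpha_0(e_{\gamma,F})=\pi(f)\in\mathcal{D}_A$, giving (1) and closing the cycle.
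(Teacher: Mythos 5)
Your proof is correct and follows essentially the same route as the paper's: the same cycle of implications, the same core-subalgebra extension argument for $(1)\Rightarrow(2)$, the same appeal to the partial dual map of Proposition \ref{proposicion partial map en A} plus density of $\Sigma_A$ for $(2)\Rightarrow(3)$, and the same comparison of diagonal operators on the basis $\{\delta_\omega\}$ for $(3)\Rightarrow(1)$. The only minor deviations are that you verify the $*$-homomorphism property on $\spann\,\mathcal{G}_A$ directly from the evaluation formula rather than citing the external thesis result, and you obtain $\Delta=X_A$ from unitality of $\alpha$ rather than by the paper's contradiction argument showing $\Sigma_A\subseteq\Delta$; both variants are sound.
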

\begin{proof}
    $(1)\implies(2)$. By Theorem 1.2.42 of \cite{DiazGranados2024}, there is a unique $*$-homomorphism $\widetilde{\alpha}:\mathcal{A}_{\mathcal{G}_A} \to \mathcal{D}_A$, which extends $\alpha_0$, where $\mathcal{A}_{\mathcal{G}_A}$ is the $*$-algebra generated by $\mathcal{G}_A$. 
    Since $\mathcal{A}_{\mathcal{G}_A}$ is generated by projections, it is a core subalgebra (see Lemma \ref{lemma:commutative_star_algebra_generated_by_projections_is_core}), and so is $\widetilde{\alpha}(\mathcal{A}_{\mathcal{G}_A})$. Therefore, by Proposition \ref{prop:homomorphism_of_C_star_algebras_via_homomorphism_between_core_subalgebras}, there exists the extension of $\widetilde{\alpha}$ to a $*$-endomorphism $\alpha:\mathcal{D}_A \to \mathcal{D}_A$. 
    
    $(2)\implies(3)$. Assume that $\alpha: \mathcal{D}_A\to\mathcal{D}_A$ is a $*$-endomorphism that extends $\alpha_0$. By Proposition \ref{Propo: relationship by alpha and sigma para x in Sigma_A}, it is satisfied that $\widehat{\alpha(a)}(x)=\widehat{a}(\sigma(x))$ for every $x\in\Sigma_A$ and for every $a\in \mathcal{D}_A$. By Proposition \ref{proposicion partial map en A}, there exists a unique map $\varphi : \Delta \subseteq X \to X$, $\Delta$ clopen, such that $\widehat{\alpha(a)}(x) = \widehat{a}(\varphi(x))$ if $x \in \Delta$, and zero otherwise. We claim that $\Sigma_A \subseteq \Delta$. In fact, suppose there exists $y \in \Sigma_A \cap \Delta^c$, $y = y_0y_1y_2 \dots$, and consider the function $\mathbbm{1}_{C_{y_1}} \in C_0(X_A)$. Then we have, 
    \begin{equation*}
        \widehat{\alpha(e_{y_1,\varnothing})}(y) = \mathbbm{1}_{C_{y_1}}(\sigma(y)) = 1. 
    \end{equation*}
    However, $y \in \Delta^c$ and hence $\widehat{\alpha(e_{y_1,\varnothing})}(y) = 0$, a contradiction. Therefore $\Sigma_A \subseteq \Delta$ and the claim is proved.
    Now, since $\Sigma_A$ is dense and $\Delta$ is closed, we have $\Delta = X_A$, and then, for every $f \in C_0(X_A)$, one gets $f \circ \sigma(x) = f \circ \varphi(x)$ for all $x \in \Sigma_A$, and then we prove that $\sigma = \varphi$ on $\Sigma_A$. 
    In fact, since $\sigma(x), \varphi(x)\in X_A = \widehat{\mathcal{D}}_A$, one gets $\sigma(x)(a)=\widehat{a}(\sigma(x))=\widehat{a}(\varphi(x))=\varphi(x)(a)$ for every $a\in\mathcal{D}_A$. By density of $\Sigma_A$ and continuity of the elements $\widehat{a}$, we necessarily have that $\sigma$ has as (unique) continuous extension the map $\varphi$.

    For each $e_{\gamma; F}\in \mathcal{G}_A$, we show that $\widehat{\alpha_0(e_{\gamma, F})}=\widehat{e_{\gamma, F}}\circ\sigma$. In fact, by continuity and density of $\Sigma_A$ in $X_A$, it is true that $\widehat{\alpha_0(e_{\gamma, F})}(\varphi_\omega)=\widehat{e_{\gamma, F}}\circ\sigma(\varphi_\omega)$ for every $\varphi_\omega\in i_1(\Sigma_A)$. The proof of this fact follows by similar arguments used in \eqref{eq:accounting_composition_alpha_generator_new}. We obtain $\widehat{\alpha_0(e_{\gamma , F})} = \mathbbm{1}_{V_{\gamma,F}}\circ \sigma\in C(X_A)$ and therefore $\alpha_0(e_{\gamma , F})\in \mathcal{D}_A$.

$(3)\implies(1)$. Let $e_{\gamma,F}$ be an element on $\mathcal{G}_A$. Since $\sigma:X_A\to X_A$ is continuous, consider $b\in \mathcal{D}_A$ such that $\widehat{b}=\widehat{e_{\gamma,F}}\circ \sigma\in C(X_A)$ ($\pi(\widehat{e_{\gamma,F}}\circ \sigma)$). For any $\omega\in\Sigma_A$ and by similar arguments used in \eqref{eq:accounting_composition_alpha_generator_new}, one gets
\begin{align*}
    \left\langle b \delta_\omega, \delta_\omega\right\rangle=\widehat{b}(\omega)=\widehat{e_{\gamma,F}}(\sigma(\omega))=\mathbbm{1}_{V_{\gamma,F}}(\sigma(\omega))=\mathbbm{1}_{V_{\omega_0\gamma,F}}(\omega)=\widehat{e_{\omega_0\gamma,F}}(\omega)=\left\langle e_{\omega_0\gamma,F} \delta_\omega, \delta_\omega\right\rangle.
\end{align*}
Then, $b \delta_\omega=e_{\omega_0\gamma,F} \delta_\omega=\alpha_0(e_{\gamma , F})\delta_\omega$ for every element $\delta_\omega$ of the base of $\mathfrak{B}(\ell^2(\Sigma_A))$ and therefore $\alpha_0(e_{\gamma , F})\in \mathcal{D}_A$.

\end{proof}

\begin{corollary}\label{proposition alpha preserve unity}
     If the $*$-endomorphism $\alpha: \mathcal{D}_A\to\mathcal{D}_A$ that extends $\alpha_0$ exists, then it preserves unity.
\end{corollary}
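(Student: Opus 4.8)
The plan is to deduce the statement directly from Proposition \ref{proposicion partial map en A}, which already records the precise criterion for an endomorphism of a commutative unital Banach algebra to preserve the unit: the clopen domain $\Delta$ of its dual partial dynamics equals the whole spectrum. Since $\mathcal{O}_A$ is assumed unital, its commutative $\mathrm{C}^*$-subalgebra $\mathcal{D}_A$ is unital as well (equivalently, $X_A$ is compact and $\mathcal{D}_A \simeq C(X_A)$), so Proposition \ref{proposicion partial map en A} applies with $\mathfrak{A} = \mathcal{D}_A$ and spectrum $X = X_A$.

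First I would invoke the map $\varphi : \Delta \subseteq X_A \to X_A$ that Proposition \ref{proposicion partial map en A} associates to $\alpha$, so that $\widehat{\alpha(a)}(x) = \widehat{a}(\varphi(x))$ for $x \in \Delta$ and $\widehat{\alpha(a)}(x) = 0$ for $x \notin \Delta$, for every $a \in \mathcal{D}_A$. The key observation is that the computation already carried out in the implication $(2)\Rightarrow(3)$ of the proof of Theorem \ref{Theorem: alpha well-defined iff sigma extended} shows $\Sigma_A \subseteq \Delta$: for $y = y_0 y_1 y_2 \cdots \in \Sigma_A$ one has $\widehat{\alpha(e_{y_1,\varnothing})}(y) = \mathbbm{1}_{C_{y_1}}(\sigma(y)) = 1 \neq 0$, which rules out $y \in \Delta^c$. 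Because $\Delta$ is clopen, hence closed, and $\Sigma_A$ is dense in $X_A$, this forces $\Delta = \overline{\Sigma_A} = X_A$.

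Finally, by the last assertion of Proposition \ref{proposicion partial map en A}, the equality $\Delta = X_A$ is equivalent to $\alpha$ preserving the unit of $\mathcal{D}_A$; therefore $\alpha(1) = 1$. Equivalently, and avoiding the cited equivalence, one may argue directly through the Gelfand transform: since $\Delta = X_A$, the defining relation gives $\widehat{\alpha(1)}(x) = \widehat{1}(\varphi(x)) = 1 = \widehat{1}(x)$ for every $x \in X_A$, so $\widehat{\alpha(1)} = \widehat{1}$ in $C(X_A)$, and injectivity of the Gelfand transform yields $\alpha(1) = 1$.

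I do not expect a genuine obstacle here, since the substantive work — producing the dual partial dynamics and showing its domain exhausts $X_A$ — is already contained in Theorem \ref{Theorem: alpha well-defined iff sigma extended}. The only point requiring a word of care is confirming that $\mathcal{D}_A$ is genuinely unital (so that Proposition \ref{proposicion partial map en A} is applicable), which is exactly where the standing hypothesis that $\mathcal{O}_A$ is unital enters.
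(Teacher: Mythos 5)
Your proposal is correct and follows essentially the same route as the paper: the paper's proof likewise deduces from Theorem \ref{Theorem: alpha well-defined iff sigma extended} that the dual partial dynamics is defined on all of $X_A$ (i.e.\ $\Delta = X_A$) and then applies the final assertion of Proposition \ref{proposicion partial map en A}. The only difference is that you re-derive $\Delta = X_A$ by unpacking the density argument from the proof of implication $(2)\Rightarrow(3)$, whereas the paper simply cites the theorem's conclusion; both are fine.
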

\begin{proof}
    Since the map $\sigma$ is defined in all $X_A$, the direct consequence of Theorem \ref{Theorem: alpha well-defined iff sigma extended}, which is the map dual to the endomorphism. By Proposition \ref{proposicion partial map en A}, $\alpha$ preserves unity.
\end{proof}

\begin{corollary}\label{cor:A_column_finite_sufficient_to_check_alpha_0_on_empty_words} Suppose that $A$ is irreducible and column-finite. There is a $*$-endomorphism $\alpha$ on $\mathcal{D}_A$ extending $\alpha_0$ if and only if $\alpha_0(e_{e,F}) \in \mathcal{D}_A$ for every $F \neq \varnothing$.
\end{corollary}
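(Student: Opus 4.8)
The plan is to combine Theorem \ref{Theorem: alpha well-defined iff sigma extended} with the explicit formula for $\alpha_0$ on generators, isolating exactly where the column-finiteness hypothesis does the work. By Theorem \ref{Theorem: alpha well-defined iff sigma extended}, the existence of a $*$-endomorphism $\alpha$ extending $\alpha_0$ is equivalent to the inclusion $\alpha_0(\mathcal{G}_A) \subseteq \mathcal{D}_A$. Thus it suffices to prove that, under column-finiteness, this inclusion holds for \emph{all} generators precisely when it holds for the empty-stem generators $e_{e,F}$ with $F \neq \varnothing$.

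First I would partition $\mathcal{G}_A$ according to whether the stem $\gamma$ equals $e$ or not, recalling that a pair $(\gamma,F) \in I_A$ with $\gamma = e$ forces $F \neq \varnothing$; hence the empty-stem generators are exactly $\{e_{e,F} : F \neq \varnothing\}$. For the case $\gamma \neq e$, I would invoke formula \eqref{ecuacion de alpha0 para el endomorpismo caso gamma no vacia}, which gives $\alpha_0(e_{\gamma,F}) = \sum_{i \in \mathbb{N}} A(i,\gamma_0)\, e_{i\gamma,F}$. The crucial observation is that the index set $\{i \in \mathbb{N} : A(i,\gamma_0) = 1\}$ is precisely the support of the $\gamma_0$-th column of $A$, which is finite by the column-finiteness hypothesis. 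Since $A(i,\gamma_0)=1$ and $\gamma$ admissible imply $i\gamma$ admissible, each surviving term $e_{i\gamma,F}$ genuinely belongs to $\mathcal{G}_A$. Therefore $\alpha_0(e_{\gamma,F})$ is a \emph{finite} linear combination of elements of $\mathcal{G}_A$, so $\alpha_0(e_{\gamma,F}) \in \spann \, \mathcal{G}_A \subseteq \mathcal{D}_A$ automatically, with no further assumption required.

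With the $\gamma \neq e$ case settled unconditionally, the inclusion $\alpha_0(\mathcal{G}_A) \subseteq \mathcal{D}_A$ collapses to the requirement $\alpha_0(e_{e,F}) \in \mathcal{D}_A$ for every $F \neq \varnothing$. This yields both implications simultaneously: if the empty-stem condition holds, then $\alpha_0(\mathcal{G}_A) \subseteq \mathcal{D}_A$ and Theorem \ref{Theorem: alpha well-defined iff sigma extended} produces the extension $\alpha$; conversely, if $\alpha$ exists, then in particular $\alpha_0(e_{e,F}) = \alpha(e_{e,F}) \in \mathcal{D}_A$ for all such $F$.

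I do not anticipate a serious obstacle, since the entire content is carried by the finiteness of columns turning the a priori infinite sum of \eqref{ecuacion de alpha0 para el endomorpismo caso gamma no vacia} into a finite one. The only points deserving care are the bookkeeping of $I_A$ (verifying that the empty-stem pairs are the only generators not covered by the $\gamma \neq e$ argument) and the observation that the sum $\alpha_0(e_{e,F})$ in \eqref{ecuacion de alpha0 para el endomorpismo caso gamma vacia} remains genuinely infinite, which is exactly why it must be checked separately and cannot be absorbed into the automatic argument.
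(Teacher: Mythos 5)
Your proposal is correct and follows essentially the same route as the paper: reduce via Theorem \ref{Theorem: alpha well-defined iff sigma extended} to the inclusion $\alpha_0(\mathcal{G}_A) \subseteq \mathcal{D}_A$, and observe that column-finiteness turns the series in \eqref{ecuacion de alpha0 para el endomorpismo caso gamma no vacia} into a finite sum of elements of $\mathcal{G}_A$, so the $\gamma \neq e$ case holds automatically and only the empty-stem generators need checking. Your additional remarks on the admissibility of $i\gamma$ and on the bookkeeping of $I_A$ are sound elaborations of details the paper leaves implicit.
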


\begin{proof} If $\alpha$ as in the statement do exist, then it is clear from Theorem \ref{Theorem: alpha well-defined iff sigma extended} that $\alpha_0(e_{e, F}) \in \mathcal{D}_A$ for every $F \neq \varnothing$. For the converse, it is sufficient to prove that $\alpha_0(e_{\gamma, F}) \in \mathcal{D}_A$ for every $(\gamma, F) \in I_A$, $\gamma \neq e$, which is a straightforward because if $A$ is column-finite, then the series on the right-handed side of \eqref{ecuacion de alpha0 para el endomorpismo caso gamma no vacia} is actually a finite sum of elements of $\mathcal{G}_A$, and therefore it belongs to $\mathcal{D}_A$.
\end{proof}

\begin{remark} When $\mathcal{O}_A$ is not unital, a similar result to the Theorem \ref{Theorem: alpha well-defined iff sigma extended} holds considering the unital version of $\mathcal{O}_A$ and $\mathcal{D}_A$, denoted by $\widetilde{\mathcal{D}}_A \subseteq \widetilde{\mathcal{O}}_A$ (see Remark \ref{Remark:D_A_Tilde_Putting_Unity}). In this case, the following statements are equivalent:
    \begin{enumerate}
        \item $\alpha_0(\widetilde{\mathcal{G}}_A)
        \subseteq \widetilde{\mathcal{D}}_A$ where the unity is preserved;
        \item There exists a $*$-endomorphism $\alpha$ on $\widetilde{\mathcal{D}}_A$ which extends $\alpha_0$.
        \item $\sigma: \Sigma_A\sqcup F_A\to \widetilde{X}_A$ is continuously extended in all $\widetilde{X}_A$.
    \end{enumerate}
    Where $\widetilde{X}_A$ is the spectrum of $\widetilde{\mathcal{D}}_A$ that coincides with $X_A\cup\{\varphi_0\}$. The character $\varphi_0$ maps the unit to one and all other elements, which are linearly independent of the unit, to zero.

\end{remark}
\begin{example}
 Consider the Markov shift space determined by the transition matrix $A$ with $A(n+1, n) = A(1, n) = 1$ and $A(2, 3n+1) = A(2, 3n-1) = 1$, for all $n \in \mathbb{N}$. Observe the following sequences
\begin{equation*}
    \omega^n = (3n-1)(3n-2)\cdots 1 1^\infty \quad \text{and} \quad \eta^n = (3n+1)(3n)\cdots 1 1^\infty
\end{equation*}
satisfy $\varphi_{\omega^n}, \varphi_{\eta^n} \to \varphi_{e,\{1,2\}}$, with $1^\infty = 1 1 \cdots$.
If we suppose $\widehat{\alpha_0(e_{e,\{2\}})}\in C_0(X_A)$, by continuity we should have
\begin{equation*}
    \lim_{n\to\infty}\widehat{\alpha_0(e_{e,\{2\}})}(\varphi_{\omega^n})=\lim_{n\to\infty}\widehat{\alpha_0(e_{e,\{2\}})}(\varphi_{\eta^n}).
\end{equation*}
However,
\begin{equation*}
    \widehat{\alpha_0(e_{e,\{2\}})}(\varphi_{\omega^n})=\inner{\sum_{i\in \mathbb{N}}A(i,\omega^n_1) e_{i, \{2\}}\delta_{\omega^n}}{\delta_{\omega^n}}=\inner{e_{\omega^n_0, \{2\}}\delta_{\omega^n}}{\delta_{\omega^n}} = \mathbbm{1}_{C_{(3n-1)2^{-1}}}(\omega) = 1,\\
\end{equation*}
and, analogously, $\widehat{\alpha_0(e_{e,\{2\}})}(\varphi_{\eta^n}) = \mathbbm{1}_{C_{(3n+1)2^{-1}}}(\omega) = 0$,
a contradiction, and therefore $\alpha_0(e_{e,\{2\}}) \notin \mathcal{D}_A$, although it is a bounded operator in $\ell^2(\Sigma_A)$.
\[
A=\bordermatrix{ & 1 & 2 & 3 & 4 & 5 & 6 & 7 & 8 & 9 & \cdots \cr
              1 & 1 & 1 & 1 & 1 & 1 & 1 & 1 & 1 & 1 & \cdots \cr
              2 & 1 & 1 & 0 & 1 & 1 & 0 & 1 & 1 & 0 & \cdots \cr
              3 & 0 & 1 & 0 & 0 & 0 & 0 & 0 & 0 & 0 &  \cdots \cr
              4 & 0 & 0 & 1 & 0 & 0 & 0 & 0 & 0 & 0 &  \cdots \cr
              5 & 0 & 0 & 0 & 1 & 0 & 0 & 0 & 0 & 0 &  \cdots \cr
              6 & 0 & 0 & 0 & 0 & 1 & 0 & 0 & 0 & 0 &  \cdots \cr
              \vdots & \vdots & \vdots & \vdots & \vdots & \vdots & \vdots & \vdots & \vdots & \vdots & \ddots}
\]    
\end{example}

In Section 6 of \cite{KwaLeb2020}, it was shown that in the case of a compact (standard) Markov shift $\Sigma_A$, when we consider the C$^*$-algebra $C(\Sigma_A)$, the $*$-endomorphism $\Psi :C(\Sigma_A) \to C(\Sigma_A)$ whose corresponding dual map is the shift dynamics consists of the rule $\Psi(f) = f \circ \sigma$, for every $f$ complex function on $\Sigma_A$. In a similar fashion, the same happens here when $\alpha$ extends $\alpha_0$ to a $*$-endomorphism on $\mathcal{D}_A$, or equivalently, via Theorem \ref{Theorem: alpha well-defined iff sigma extended}, when $\sigma$ can be continuously extended to the whole space $X_A$. This is realized via the Gelfand transformation $\pi^{-1}$ from Theorem \ref{thm:isomorphism_C_0_X_A_and_D_A}. The maps $\alpha$ and $\pi$ create another (unique) $*$-endomorphism $\Theta : C_0(X_A) \to C_0(X_A)$ which makes the diagram
\[
\begin{tikzcd}
\mathcal{D}_A \arrow[r, "\alpha"] 
& \mathcal{D}_A  \arrow[d, "\pi^{-1}"] \\
C_0(X_A) \arrow[u, "\pi"] \arrow[r, "\Theta"]
& C_0(X_A)
\end{tikzcd}
\]
commutes, that is, $\Theta = \pi^{-1} \circ \alpha \circ \pi$.

\begin{theorem}\label{Theorem:shift_dual_is_theta} Suppose that $\sigma$ is continuously extended to $X_A$. Then $\Theta(f) = f \circ \sigma$ for every $f \in C(X_A)$.    
\end{theorem}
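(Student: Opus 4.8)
The statement $\Theta(f) = f \circ \sigma$ is essentially a repackaging of
the duality we have already established, transported through the Gelfand
isomorphism $\pi$ of Theorem~\ref{thm:isomorphism_C_0_X_A_and_D_A}. The key
identity is that $\pi^{-1}$ is the Gelfand transform, recorded in
Remark~\ref{rmk:Gelfand_transform_D_A}: for $a \in \mathcal{D}_A$ we have
$\pi^{-1}(a) = \widehat{a}$, so that in particular
$\pi^{-1}(e_{\gamma,F}) = \widehat{e_{\gamma,F}} = \mathbbm{1}_{V_{\gamma,F}}$.
Since $\Theta = \pi^{-1} \circ \alpha \circ \pi$, proving
$\Theta(f) = f \circ \sigma$ amounts to showing
$\widehat{\alpha(a)} = \widehat{a} \circ \sigma$ for every $a \in \mathcal{D}_A$,
after writing $a = \pi(f)$, i.e. $f = \widehat{a}$.

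Let me write out the reduction. Since $\sigma$ is continuously extended to
all of $X_A$, Theorem~\ref{Theorem: alpha well-defined iff sigma extended}
guarantees that $\alpha$ exists as a $*$-endomorphism extending $\alpha_0$.
I would argue by density and continuity. First, on the core subalgebra
$\spann\,\mathcal{G}_A$ it suffices to check the identity on generators
$e_{\gamma,F}$, where by
Remark~\ref{rmk:Gelfand_transform_D_A} and the computation
\eqref{eq:accounting_composition_alpha_generator_new} in the proof of
Proposition~\ref{Propo: relationship by alpha and sigma para x in Sigma_A} we
already have $\widehat{\alpha(e_{\gamma,F})}(\varphi_\omega) =
\widehat{e_{\gamma,F}}(\sigma(\varphi_\omega))$ for every $\omega \in \Sigma_A$.
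Both sides are continuous functions on $X_A$ (the left by construction; the
right because $\widehat{e_{\gamma,F}} \in C_0(X_A)$ and $\sigma$ is now
continuous on all of $X_A$). Since $\Sigma_A$ is dense in $X_A$ and the two
continuous functions agree on the dense subset $i_1(\Sigma_A)$, they agree
everywhere, giving $\widehat{\alpha(e_{\gamma,F})} =
\widehat{e_{\gamma,F}} \circ \sigma$ on $X_A$.

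To pass from generators to arbitrary $f \in C(X_A)$, I would take
$a = \pi(f) \in \mathcal{D}_A$ and a sequence $a_n \in \spann\,\mathcal{G}_A$
with $a_n \to a$ in norm. By linearity the identity
$\widehat{\alpha(a_n)} = \widehat{a_n}\circ\sigma$ holds for each $n$. The
Gelfand transform $a \mapsto \widehat{a}$ is isometric and $\alpha$ is
continuous, so $\widehat{\alpha(a_n)} \to \widehat{\alpha(a)}$ uniformly;
meanwhile $\widehat{a_n} \to \widehat{a}$ uniformly, hence
$\widehat{a_n}\circ\sigma \to \widehat{a}\circ\sigma$ uniformly (precomposition
with a fixed map is a contraction in sup-norm). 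Taking limits yields
$\widehat{\alpha(a)} = \widehat{a}\circ\sigma$, i.e.
$\pi^{-1}(\alpha(\pi(f))) = f \circ \sigma$, which is exactly
$\Theta(f) = f\circ\sigma$.

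The argument is short because all the substantive work lives in earlier
results: the genuine obstacle, namely producing the continuous extension of
$\sigma$ and the endomorphism $\alpha$, was handled in
Theorem~\ref{Theorem: alpha well-defined iff sigma extended}, and the
identification of $\pi^{-1}$ with the Gelfand transform in
Remark~\ref{rmk:Gelfand_transform_D_A}. The only point requiring mild care is
ensuring that both sides of the generator identity are genuinely continuous on
all of $X_A$ before invoking density—this is precisely where the hypothesis
that $\sigma$ extends continuously (rather than being merely partially defined)
is used, since otherwise $\widehat{e_{\gamma,F}}\circ\sigma$ would not be
defined on $Y_A$ and the density argument would break down.
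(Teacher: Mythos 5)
Your proposal is correct and follows essentially the same route as the paper: establish $\widehat{\alpha(e_{\gamma,F})}=\widehat{e_{\gamma,F}}\circ\sigma$ on generators via the already-proven identity on the dense set $i_1(\Sigma_A)$ together with continuity of $\sigma$, then transport through $\pi$ and extend to all of $C(X_A)$ by density and continuity of the two $*$-endomorphisms. The paper phrases the last step as "two continuous $*$-endomorphisms agreeing on a dense $*$-subalgebra coincide" rather than your explicit limit argument, but the content is identical.
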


\begin{proof} The proof that $(3)$ implies $(1)$ in Theorem \ref{Theorem: alpha well-defined iff sigma extended} shows that $\widehat{\alpha_0(e_{\gamma, F})}=\widehat{e_{\gamma, F}}\circ\sigma$ for every $(\gamma, F) \in I_A$. Then,
\begin{align*}
    \Theta(\mathbbm{1}_{V_{\gamma,F}}) &= \pi^{-1} \circ \alpha \circ \pi(\mathbbm{1}_{V_{\gamma,F}}) = \pi^{-1} \circ \alpha (e_{\gamma, F}) = \widehat{\alpha(e_{\gamma, F})} = \widehat{e_{\gamma, F}}\circ\sigma = \mathbbm{1}_{V_{\gamma,F}}\circ\sigma = \Phi_\sigma(\mathbbm{1}_{V_{\gamma,F}}),
\end{align*}
where $\Phi_\sigma$ is the $*$-endomorphism $\Phi_\sigma:  C(X_A) \to  C(X_A)$, $\Phi_\sigma(f) :=  f\circ \sigma$, $f \in  C(X_A)$. Since $\Phi_\sigma$ and $\Theta$ coincide in the same dense $*$-subalgebra $\spann \{\mathbbm{1}_{V_{\gamma,F}}: (\gamma,F) \in I_A\} \subseteq C(X_A)$, we conclude that they coincide, that is, $\Theta(f) = f \circ \sigma$ for every $f \in C(X_A)$. 
\end{proof}

\section{Generalized Markov shifts with continuous shift maps}\label{sec:examples} \\

In this section, we present two classes of generalized Markov shifts, named \emph{single empty word class} and \emph{periodic renewal class}, where we explicitly prove that the dynamics can be continuously extended to the whole space, as well as we show how the dynamics work in their respective sets of empty words. Different behaviors are presented, such as the occurrence of a fixed point (renewal and prime renewal shifts) and a periodic orbit (pair renewal shift). We point out that in our context the notion of renewal class is bigger than the usual one for standard Markov shifts \cite{Iommi2006, Sarig2001PT}. The two aformentioned classes of spaces are presented in this section, and we prove that the shift map can be extended to the whole space in both of them. In addition, we present examples in both classes and counterexamples when we remove some of the requirements of belonging in these families of shift spaces. First, we prove some auxiliary results that will allow us to show the dynamical properties above.

\begin{lemma}\label{lemma:exploding_first_coordinate_implies_limit_points_in_E_A} If $X_A$ is compact, then every sequence $(\xi^n)_{n \in \mathbb{N}}$ in $X_A\setminus E_A$ has all of its limit points in $E_A$ if and only if $x^n_0 := \kappa(\xi^n)_0 \to \infty$.
\end{lemma}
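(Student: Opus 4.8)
The plan is to reduce the whole statement to a single clopen observation about the first-coordinate cylinders $C_i$, $i \in \mathbb{N}$, and then let compactness do the remaining work. The starting point is that, for a configuration $\xi \in X_A \setminus E_A$ (equivalently $\xi \in \Sigma_A \sqcup F_A$, so $\xi$ has non-empty stem), the only filled positive single letter is $\kappa(\xi)_0$: by the definition of the stem one has $\{g \in \mathbb{F}_\mathbb{N}^+ : \xi_g = 1\} = \llbracket \kappa(\xi) \rrbracket$, whose unique length-one element is $\kappa(\xi)_0$. Hence for every $\xi \in X_A \setminus E_A$ and every $i \in \mathbb{N}$,
\[
    \xi \in C_i \iff \xi_i = 1 \iff \kappa(\xi)_0 = i ;
\]
in particular $\xi^n \in C_i$ if and only if $x^n_0 = i$. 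I would also record the dual characterization of $E_A$: since a root occupies only coordinates of the form $j^{-1}$, a configuration has empty stem precisely when $\xi_i = 0$ for every $i \in \mathbb{N}$, where the implication from ``$\xi_i = 0$ for all $i$'' back to ``empty stem'' uses the connectedness rule $(R2)$, because any filled positive word of positive length forces its first letter to be filled.

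For the implication ($\Leftarrow$) I would argue directly, without even invoking compactness. Assume $x^n_0 = \kappa(\xi^n)_0 \to \infty$ and let $\xi$ be any limit point, say $\xi^{n_k} \to \xi$. Fix $i \in \mathbb{N}$. Because $x^n_0 \to \infty$ there is $N$ with $x^n_0 \neq i$ for all $n \geq N$, so the equivalence above gives $\xi^n \in C_i^c$ for all $n \geq N$. As each generalized cylinder $C_i = \pi_i^{-1}(1) \cap X_A$ is clopen, its complement is closed; the convergent subsequence eventually lies in $C_i^c$, whence $\xi \in C_i^c$, i.e. $\xi_i = 0$. Since $i$ was arbitrary, the characterization of $E_A$ yields $\xi \in E_A$.

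For the converse ($\Rightarrow$) I would use the contrapositive together with compactness. Suppose $x^n_0 \not\to \infty$; then some subsequence $(x^{n_k}_0)_k$ is bounded, hence takes a single value $i \in \mathbb{N}$ infinitely often, and after passing to a further subsequence we may assume $\kappa(\xi^{n_k})_0 = i$, i.e. $\xi^{n_k} \in C_i$, for all $k$. Compactness of $X_A$ produces a convergent sub-subsequence with some limit $\xi \in X_A$; since $C_i$ is closed, $\xi \in C_i$, so $\xi_i = 1$ and thus $\xi \notin E_A$. As $\xi$ is a limit point of the original sequence $(\xi^n)$, this contradicts the hypothesis that all limit points lie in $E_A$, forcing $x^n_0 \to \infty$.

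The argument is essentially mechanical once the key equivalence and the clopenness of the $C_i$ are in place; the only delicate point — and the closest thing to an obstacle — is the bookkeeping in the converse, namely extracting first a subsequence on which $\kappa(\xi^n)_0$ is constant and then a convergent sub-subsequence, while making sure the resulting limit is genuinely a limit point of the original sequence so that the hypothesis applies. Compactness of $X_A$ enters exactly once, precisely to guarantee that such a limit point exists.
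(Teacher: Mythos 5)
Your proof is correct and follows essentially the same route as the paper's: both reduce the statement to the observation that, for $\xi$ with non-empty stem, the single letter $i$ is filled exactly when $\kappa(\xi)_0 = i$, and then track the coordinates $\xi_p$ along the sequence. Your version is somewhat more careful — the forward direction is argued by contrapositive with an explicit appeal to compactness to produce a limit point in some closed $C_i$, whereas the paper's forward direction is stated more tersely — but the underlying idea is identical.
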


\begin{proof} Suppose that $\xi^{n_k} \to \xi \in E_A$ for some subsequence of $(\xi^n)_{n \in \mathbb{N}}$. Then, for every $p \in \mathbb{N}$, there exists $N = N_p$ s.t. $(\xi^{n_k})_i = 0$ for every $1 \leq i \leq p$, and hence $x^{n_k}_0 > p$, whenever $n > N_p$, and therefore $x^n_0 \to \infty$. Conversely, assume that $x^n_0 \to \infty$, that is, for every $p \in \mathbb{N}$, we have $N = N_p$ s.t. $(\xi^n)_p = 0$ when $n > N$. Hence, every limit point $\xi$ of $(\xi^n)_{n \in \mathbb{N}}$ satisfies $\xi_p = 0$ for every $p \in \mathbb{N}$ and therefore it belongs to $E_A$.
\end{proof}



\begin{proposition}\label{thm:E_A_shift_invariant} Suppose that $A$ is irreducible and column-finite, and $X_A$ is compact. If the shift dynamics is extended continuously to the whole $X_A$, then $E_A$ an invariant set in the sense that $\sigma(E_A) \subseteq E_A$.
\end{proposition}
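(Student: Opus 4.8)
\emph{Plan.} The idea is to transport an empty-stem configuration through the extended shift by approximating it from inside $\Sigma_A$, using Lemma \ref{lemma:exploding_first_coordinate_implies_limit_points_in_E_A} both to read off, and then to re-detect, empty stems via ``exploding first coordinates,'' with column-finiteness supplying the only nontrivial arithmetic input. First I would fix an arbitrary $\xi \in E_A$. Since $\Sigma_A$ is dense in the compact space $X_A$, there is a sequence $(\omega^n)_{n \in \mathbb{N}} \subseteq \Sigma_A$ with $\omega^n \to \xi$. As $\Sigma_A \subseteq X_A \setminus E_A$ and the unique limit point of $(\omega^n)$ is $\xi \in E_A$, Lemma \ref{lemma:exploding_first_coordinate_implies_limit_points_in_E_A} yields $\omega^n_0 = \kappa(\omega^n)_0 \to \infty$.

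The key step, and the place where column-finiteness is essential, is to promote $\omega^n_0 \to \infty$ to $\omega^n_1 \to \infty$. I would argue by contradiction: were $(\omega^n_1)_n$ to admit a bounded subsequence, then along a further subsequence we could assume $\omega^n_1 = j$ constant; admissibility of $\omega^n \in \Sigma_A$ forces $A(\omega^n_0, j) = 1$, so each $\omega^n_0$ lies in the set $\{i \in \mathbb{N} : A(i,j) = 1\}$, which is finite because $A$ is column-finite. This contradicts $\omega^n_0 \to \infty$, hence $\omega^n_1 \to \infty$.

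Finally I would invoke continuity of the extended shift, which gives $\sigma(\omega^n) \to \sigma(\xi)$. Each $\sigma(\omega^n)$ lies in $\Sigma_A \subseteq X_A \setminus E_A$ and has first coordinate $\kappa(\sigma(\omega^n))_0 = \omega^n_1 \to \infty$, so Lemma \ref{lemma:exploding_first_coordinate_implies_limit_points_in_E_A}, now applied to the sequence $(\sigma(\omega^n))_n$, forces all of its limit points into $E_A$. In particular $\sigma(\xi) \in E_A$, and since $\xi \in E_A$ was arbitrary we conclude $\sigma(E_A) \subseteq E_A$.

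The main obstacle is not conceptual but one of bookkeeping: one must ensure that the approximating sequences genuinely remain in $X_A \setminus E_A$ (so that both directions of Lemma \ref{lemma:exploding_first_coordinate_implies_limit_points_in_E_A} are applicable), and that it is \emph{column}-finiteness, rather than row-finiteness, that converts ``$\omega^n_0 \to \infty$'' into ``$\omega^n_1 \to \infty$.'' This last implication is the genuine content of the statement, and it is also the reason the conclusion here is the one-sided inclusion $\sigma(E_A) \subseteq E_A$ rather than an equality.
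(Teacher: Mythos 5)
Your proposal is correct and follows essentially the same route as the paper's proof: approximate $\xi \in E_A$ by a sequence in $\Sigma_A$, use Lemma \ref{lemma:exploding_first_coordinate_implies_limit_points_in_E_A} to get $\kappa(\omega^n)_0 \to \infty$, use column-finiteness to promote this to $\kappa(\omega^n)_1 \to \infty$, and then apply continuity of the extension together with the converse direction of the same lemma to place $\sigma(\xi)$ in $E_A$. Your explicit subsequence argument for the column-finiteness step is a welcome elaboration of a point the paper states without justification, but it is not a different method.
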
 

\begin{proof} By Lemma \ref{lemma:exploding_first_coordinate_implies_limit_points_in_E_A}, if a sequence in $\Sigma_A \subseteq X_A\setminus E_A$ converges to an element $\xi \in E_A$, then $\kappa(\xi^n)_0 \to \infty$. Since $A$ is column finite, we have $\kappa(\xi^n)_1 \to \infty$. If the dynamics is a well-defined continuous function on $X_A$, we have $\kappa(\sigma(\xi^n))_0 = \kappa(\xi^n)_1$, and therefore $\sigma(\xi) \in E_A$.
\end{proof}

\begin{definition}\label{def:Single_Empty_Word_Class} A generalized countable Markov shift $X_A$ belongs to the \emph{single empty-word class} when the following holds:
\begin{itemize}
    \item[$(i)$] $A$ is irreducible and column-finite;
    \item[$(ii)$] $X_A$ is compact;
    \item[$(iii)$] $E_A$ is a singleton.
\end{itemize}
\end{definition}

Examples of single empty-word generalized countable Markov shifts are the renewal (Example \ref{exa:renewal_shift}) and lazy renewal (Example \ref{exa:lazy_renewal_shift}).




\begin{theorem} Let $X_A$ be a generalized countable Markov shift in the single empty-word class. The extension of $\sigma$ given by $\sigma(\xi^0) = \xi^0$, where $\xi^0$ is the empty word of $X_A$, is continuous. 
\end{theorem}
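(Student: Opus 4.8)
The plan is to reduce continuity of the extended map $\sigma$ on all of $X_A$ to sequential continuity at the single point $\xi^0 \in E_A$. On the open set $\Sigma_A \sqcup F_A$ the map $\sigma$ is already a local homeomorphism (Remark \ref{rmk:preimages_generating_Y_A}), hence continuous; since this set is open in $X_A$, the extension is automatically continuous at each of its points, and only continuity at $\xi^0$ remains. Moreover, $X_A$ embeds as a closed subspace of $\{0,1\}^{\mathbb{F}_\mathbb{N}}$ with $\mathbb{F}_\mathbb{N}$ countable, so $X_A$ is compact and metrizable and it suffices to test continuity along sequences. Everything thus comes down to showing that $\xi^n \to \xi^0$ forces $\sigma(\xi^n) \to \xi^0 = \sigma(\xi^0)$.

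Next I would set up the sequence. Discarding the harmless terms equal to $\xi^0$, I may assume $\xi^n \neq \xi^0$ for all $n$; since $E_A = \{\xi^0\}$ by Definition \ref{def:Single_Empty_Word_Class}(iii), every $\xi^n$ lies in $X_A \setminus E_A$ and has a nonempty stem. Because $X_A$ is compact and the limit $\xi^0$ lies in $E_A$, Lemma \ref{lemma:exploding_first_coordinate_implies_limit_points_in_E_A} yields $\kappa(\xi^n)_0 \to \infty$. Using that $A$ is column-finite — exactly as in the proof of Proposition \ref{thm:E_A_shift_invariant}, where for each fixed symbol $b$ the set $\{a : A(a,b) = 1\}$ is finite while $A(\kappa(\xi^n)_0, \kappa(\xi^n)_1) = 1$ — I would deduce that the second symbol also explodes, $\kappa(\xi^n)_1 \to \infty$, along those $n$ for which the stem has length at least $2$.

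The core step is then to show that every limit point of $(\sigma(\xi^n))_n$ equals $\xi^0$, whence sequential compactness gives $\sigma(\xi^n) \to \xi^0$. I would split the indices into two groups. For $n$ with $|\kappa(\xi^n)| = 1$, the shift deletes the unique (infinite-emitter) symbol, so $\sigma(\xi^n)$ has empty stem, lies in $E_A = \{\xi^0\}$, and equals $\xi^0$. For the remaining $n$, with $|\kappa(\xi^n)| \geq 2$, we have $\sigma(\xi^n) \in X_A \setminus E_A$ and $\kappa(\sigma(\xi^n))_0 = \kappa(\xi^n)_1 \to \infty$; applying Lemma \ref{lemma:exploding_first_coordinate_implies_limit_points_in_E_A} once more to this subsequence forces all of its limit points into $E_A = \{\xi^0\}$. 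In either group the only possible limit point is $\xi^0$, so $(\sigma(\xi^n))_n$ converges to $\xi^0$, establishing continuity at $\xi^0$.

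The main obstacle I anticipate is the bookkeeping around short stems: configurations whose stem has length one map to the empty word and must be treated separately from those of length at least two, and one must confirm that the column-finiteness passage from $\kappa(\xi^n)_0 \to \infty$ to $\kappa(\xi^n)_1 \to \infty$ is invoked precisely on the length-$\geq 2$ subsequence. Once this case split is handled cleanly, the two uses of Lemma \ref{lemma:exploding_first_coordinate_implies_limit_points_in_E_A} — once on the domain sequence to obtain $\kappa(\xi^n)_0 \to \infty$, once on the image sequence to locate its limit points in $E_A$ — close the argument.
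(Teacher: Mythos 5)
Your proposal is correct and follows essentially the same route as the paper's proof: reduce to sequential continuity at the unique empty word, use Lemma \ref{lemma:exploding_first_coordinate_implies_limit_points_in_E_A} to get $\kappa(\xi^n)_0 \to \infty$, pass to $\kappa(\xi^n)_1 \to \infty$ via column-finiteness, and apply the lemma again together with compactness to conclude $\sigma(\xi^n) \to \xi^0$. Your treatment of the length-one-stem terms and the explicit justification that sequences suffice are slightly more careful than the paper's, but the argument is the same.
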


\begin{proof} Let $(\xi^n)_{n \in \mathbb{N}}$ be a sequence converging to $\xi^0$. For the constant sequence $\xi^n = \xi^0$ we have that $\lim_n \sigma(\xi^n) = \xi^0$, so lets suppose that such a sequence is not a subsequence of $\xi^n$. By Lemma \ref{lemma:exploding_first_coordinate_implies_limit_points_in_E_A} we also may assume that the elements $\xi^n$ have stem of length greater than 1, because in this case we have $\sigma(\xi^n) = \xi^0$. So without loss of generality, let $|\kappa(\xi^n)| \geq 2$. Again by Lemma \ref{lemma:exploding_first_coordinate_implies_limit_points_in_E_A}, we must have $\kappa(\xi^n)_0 \to \infty$, and  we also have $\kappa(\xi^n)_1 \to \infty$, since $A$ is column-finite, and hence the sequence $\sigma(\xi^n)$ has a unique limit point, namely $\xi^0$. By compactness this sequence converges to $\xi^0$, and therefore $\sigma$ is continuous.
\end{proof}

\begin{remark} In \cite{MichiNakaHisaYoshi2022} there are conditions for the continuous extension of $\sigma$ in the whole $X_A$. We compare those conditions with Definition \ref{def:Single_Empty_Word_Class}. First observe that, although they have as standing hypothesis that the shift is topologically mixing (or equivalently, aperiodic transition matrices), such a condition is used for further results after the extension of the shift map. So let us assume $A$ is irreducible only in their work. Another standing hypothesis is the necessity of $\mathcal{O}_A$ be unital, which is equivalent to $(ii)$ in our definition. The column-finite property in $(i)$ is weaker than uniformly column finiteness (UCF) (\cite[Defintion 3.1]{MichiNakaHisaYoshi2022}), and $(iii)$ is equivalent to the finitely summable property (FS) (\cite[Defintion 3.3]{MichiNakaHisaYoshi2022}). Both extensions are the same, by making the empty word a fixed point.
\end{remark}

\begin{figure}[h]
    \centering
    \scalebox{.7}{

\tikzset{every picture/.style={line width=0.75pt}} 

\begin{tikzpicture}[x=0.75pt,y=0.75pt,yscale=-1,xscale=1]

\draw  [fill={rgb, 255:red, 0; green, 0; blue, 0 }  ,fill opacity=1 ] (94.94,100.01) .. controls (94.88,94.07) and (99.65,89.21) .. (105.59,89.16) .. controls (111.53,89.1) and (116.38,93.87) .. (116.44,99.81) .. controls (116.49,105.75) and (111.72,110.6) .. (105.79,110.66) .. controls (99.85,110.71) and (94.99,105.94) .. (94.94,100.01) -- cycle ;
\draw  [fill={rgb, 255:red, 0; green, 0; blue, 0 }  ,fill opacity=1 ] (192.94,36.01) .. controls (192.88,30.07) and (197.65,25.21) .. (203.59,25.16) .. controls (209.53,25.1) and (214.38,29.87) .. (214.44,35.81) .. controls (214.49,41.75) and (209.72,46.6) .. (203.79,46.66) .. controls (197.85,46.71) and (192.99,41.94) .. (192.94,36.01) -- cycle ;
\draw [line width=2.25]    (194,30) .. controls (136.4,9.84) and (110.13,47.74) .. (105.99,84.57) ;
\draw [shift={(105.59,89.16)}, rotate = 273.61] [fill={rgb, 255:red, 0; green, 0; blue, 0 }  ][line width=0.08]  [draw opacity=0] (16.07,-7.72) -- (0,0) -- (16.07,7.72) -- (10.67,0) -- cycle    ;
\draw [line width=2.25]    (116.44,99.81) .. controls (194.41,100) and (114.33,191.24) .. (106.19,115.51) ;
\draw [shift={(105.79,110.66)}, rotate = 86.46] [fill={rgb, 255:red, 0; green, 0; blue, 0 }  ][line width=0.08]  [draw opacity=0] (16.07,-7.72) -- (0,0) -- (16.07,7.72) -- (10.67,0) -- cycle    ;

\draw (0,85) node [anchor=north west][inner sep=0.75pt]  [font=\huge]  {$( e,\{1\})$};
\draw (160,56) node [anchor=north west][inner sep=0.75pt]  [font=\huge]  {$( 1,\{1\})$};

\end{tikzpicture}
}
    \caption{The continuously extended dynamics on $X_A$ for the renewal shift for the empty word element. The arrows point from an element of the space to its image under $\sigma$, where only the only. The elements presented above are written as an ordered pair stem-root.}
    \label{fig:dynamics_empty_renewal}
\end{figure}
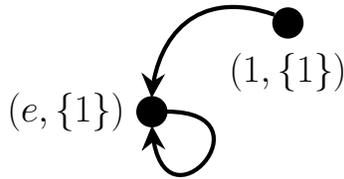

Now, we present an example of a shift in the single empty-word class that does not attend the extension conditions in \cite{MichiNakaHisaYoshi2022}.

\begin{example} Let $A$ be the transition matrix given by $A(1,n) = A(n+1,n) = 1$, $n \in \mathbb{N}$, and $A(i,j) = \mathbbm{1}_{j+2 \leq i \leq 2j+1}$ for the rest of the entries. The matrix is irreducible, column-finite and $X_A$ is compact. Moreover, $E_A$ is a singleton, whose empty-stem configuration is $(e,\{1\})$. However, $|\{i \in \mathbb{N}: A(i,j)=1\}| = 2+j$ for every $j \in \mathbb{N}$, and therefore $A$ is not uniformly column finite, but $X_A$ belongs to the single empty-word class, and therefore $\sigma$ has a continuous extension to $X_A$.
\end{example}

The next example shows the importance of the column-finiteness hypothesis.

\begin{example}[Full shift] The transition matrix $A$ for the full shift is given by $A(i,j) = 1$, for every $i,j \in \mathbb{N}$. Observe that $X_A$ is compact because $Q_1 \in \mathcal{D}_A$. Also, $E_A$ is a singleton, where the empty word is $\xi^0 = (e, \mathbb{N})$. However, $A$ is not column-finite. We show that the map $\sigma$ cannot be extended continuously to $X_A$. In fact, consider the sequences $\xi^n$ and $\eta^n$ in $\Sigma_A$, such that
\begin{equation*}
    \kappa(\xi^n) = n1^\infty \; \text{ and } \kappa(\eta^n) = n2^\infty.
\end{equation*}
By Lemma \ref{lemma:exploding_first_coordinate_implies_limit_points_in_E_A} and $|E_A| = 1$, we have that $\xi^n, \eta^n \to \xi^0$, but $\sigma(\xi^n) = 1^\infty$ and $\sigma(\eta^n) = 2^\infty$ for every $n \in \mathbb{N}$. Therefore, every extension of $X_A$ in the full shift is necessarily descontinuous. In \cite{OttTomfordeWillis2014}, they construct the full shift including finite words via compactification in the alphabet. In this case, the full shift is homeomorphic to $X_A$, and the extension of the shift map to the whole space is descontinuous, and there also consists into turn the empty-word a fixed point.
\end{example}

Now we present an example that illustrates the necessity of the compactness hypothesis.

\begin{example} Let $A$ be the transition matrix $A$ defined as $A(1, 2n) = A(n + 1, n) = 1$, and  for all $n\in\mathbb{N}_0$. Since this example there is a sequence of columns of $A$ to converge to the null vector, $X_A$ is not compact, by Lemma 27 in \cite{BEFR2018}. There exists only an empty word i.e., $E_A=\{(e,\{1\})\}$. Consider the sequence $\xi^n$ such that 
\begin{equation*}
    \kappa(\xi^n) = n(n-1)\cdots 1^\infty \
\end{equation*}
We have that $\xi^n\to(e,\{1\})$, but $\sigma(\xi^n)$ does not converge. Thus, it is impossible to define a continuous extension on $X_A$.
    
\end{example}

\begin{definition}\label{def:Periodic_Renewal_Class} We say that a generalized countable Markov shift $X_A$ belongs to the \emph{periodic renewal class} if it satisfies the following properties:
\begin{itemize}
    \item[$(i)$] $A$ is irreducible;
    \item[$(ii)$] $X_A$ is compact;
    \item[$(iii)$] $A(i+1,i) = 1$ for every $i \in \mathbb{N}$;
    \item[$(iv)$] the set of infinite emitters is finite, where we define
    \begin{equation*}
        n_e := \max\{j \in \mathbb{N}: j \text{ is an infinite emitter}\};
    \end{equation*}
    \item[$(v)$] there exists an $n_e \times m$ $\{0,1\}$-matrix $M$ satisfying:
    \begin{itemize}
        \item[$(a)$] the columns of $M$ are pairwise distinct;
        \item[$(b)$] from the $I$-th column, $I \in \mathbb{N}$, the first $n_e$ rows of $A$ consists into an infinite repetition of the matrix $M$. In other words, there exists $I \in \mathbb{N}$ such that, for every $g \geq I$, and then we have $g = I + p + sm$, $0 \leq p \leq m-1$, $s \in \mathbb{N}_0$, it follows that
        \begin{equation*}
            A(k,I + p + sm) = M(k, (p\mod m) + 1),\; 1 \leq k \leq n_e.
        \end{equation*}
    \end{itemize}
    \item[$(vi)$] $A(i,k) = 0$ for $k > n_e$ and $k \neq i-1$.
\end{itemize}
\end{definition}

The general form of the transition matrices that corresponds to generalized Markov shifts that belong to the periodic renewal class is shown in Figure \ref{fig:Periodic_Renewal_Matrix_Class}. Examples of generalized countable Markov shifts in this class are the renewal (Example \ref{exa:renewal_shift}) and pair renewal (Example \ref{exa:pair_renewal_shift}).

\begin{figure}[h]
    \centering
    \scalebox{.8}{

\tikzset{every picture/.style={line width=0.75pt}} 

\begin{tikzpicture}[x=0.75pt,y=0.75pt,yscale=-1,xscale=1]

\draw [line width=1.5]    (258.65,51.4) -- (258.95,74) ;
\draw [shift={(259,78)}, rotate = 269.25] [fill={rgb, 255:red, 0; green, 0; blue, 0 }  ][line width=0.08]  [draw opacity=0] (13.4,-6.43) -- (0,0) -- (13.4,6.44) -- (8.9,0) -- cycle    ;

\draw   (252,85) -- (356,85) -- (356,196) -- (252,196) -- cycle ;

\draw  [color={rgb, 255:red, 161; green, 163; blue, 167 }  ,draw opacity=1 ][fill={rgb, 255:red, 161; green, 163; blue, 167 }  ,fill opacity=1 ] (100,86) -- (250,86) -- (250,195) -- (100,195) -- cycle ;
\draw   (356,85) -- (460,85) -- (460,196) -- (356,196) -- cycle ;

\draw   (460,85) -- (564,85) -- (564,196) -- (460,196) -- cycle ;

\draw  [line width=1.5]  (150,203) -- (352,418) -- (100,418) -- (100,202) -- cycle ;
\draw  [line width=1.5]  (564,205) -- (563,423) -- (414,420) -- (211,205) -- cycle ;
\draw [line width=1.5]    (362.65,51.4) -- (362.95,74) ;
\draw [shift={(363,78)}, rotate = 269.25] [fill={rgb, 255:red, 0; green, 0; blue, 0 }  ][line width=0.08]  [draw opacity=0] (13.4,-6.43) -- (0,0) -- (13.4,6.44) -- (8.9,0) -- cycle    ;

\draw [line width=1.5]    (468.65,53.4) -- (468.95,76) ;
\draw [shift={(469,80)}, rotate = 269.25] [fill={rgb, 255:red, 0; green, 0; blue, 0 }  ][line width=0.08]  [draw opacity=0] (13.4,-6.43) -- (0,0) -- (13.4,6.44) -- (8.9,0) -- cycle    ;

\draw (21,79.4) node [anchor=north west][inner sep=0.75pt]  [font=\LARGE]  {$A\ =\ \begin{pmatrix}
  &  &  &  &  &  &  &  &      \hspace{8cm}  \\
1 &  &  &  &  &  &  &  &       \\
  & 1 &  &  &  &  &  &  &         \\
  &  & 1 &  &  &  &  &  &       \\
  &  &  & 1 &  &  &  &  &       \\
  &  &  &  & 1 &  &  &  &      \\
  &  &  &  &  & 1 &  &  &     \\
  &  &  &  &  &  & 1 &  &      \\ 
  &  &  &  &  &  &  & 1 & \\
  &  &  &  &  &  &  &  &  \\
  &  &  &  &  &  &  &  & \\
  &  &  &  &  &  &  &  & \\
  &  &  &  &  &  &  &  & \\
  &  &  &  &  &  &  &  & 
\end{pmatrix}$};
\draw (136,47.4) node [anchor=north west][inner sep=0.75pt]  [font=\LARGE,color={rgb, 255:red, 0; green, 0; blue, 0 }  ,opacity=1 ]  {$j< I$};
\draw (283,115.4) node [anchor=north west][inner sep=0.75pt]  [font=\Huge]  {$M$};
\draw (387,115.4) node [anchor=north west][inner sep=0.75pt]  [font=\Huge]  {$M$};
\draw (491,115.4) node [anchor=north west][inner sep=0.75pt]  [font=\Huge]  {$M$};
\draw (568,116.4) node [anchor=north west][inner sep=0.75pt]  [font=\huge]  {$\cdots $};
\draw (300,320) node [anchor=north west][inner sep=0.75pt]  [font=\Huge,rotate=-385.05]  {$\ddots $};
\draw (568,256.4) node [anchor=north west][inner sep=0.75pt]  [font=\huge]  {$\cdots $};
\draw (205,426.4) node [anchor=north west][inner sep=0.75pt]  [font=\Huge]  {$\vdots $};
\draw (464,428.4) node [anchor=north west][inner sep=0.75pt]  [font=\Huge]  {$\vdots $};
\draw (565,426.4) node [anchor=north west][inner sep=0.75pt]  [font=\Huge,rotate=-385.05]  {$\ddots $};
\draw (233,23.4) node [anchor=north west][inner sep=0.75pt]  [font=\Large]  {$j=I$};
\draw (337,23.4) node [anchor=north west][inner sep=0.75pt]  [font=\Large]  {$j=I+m$};
\draw (154,303.4) node [anchor=north west][inner sep=0.75pt]  [font=\Huge]  {$0$};
\draw (442,269.4) node [anchor=north west][inner sep=0.75pt]  [font=\Huge]  {$0$};
\draw (443,25.4) node [anchor=north west][inner sep=0.75pt]  [font=\Large]  {$j=I+2m$};

\end{tikzpicture}

}
    \caption{Typical matrix of a generalized Markov shift space that belongs to the periodic renewal class.\label{fig:Periodic_Renewal_Matrix_Class}}
\end{figure}

\begin{proposition}\label{prop:PRC_properties} For a matrices $A$ and $M$ as in Definition \ref{def:Periodic_Renewal_Class}, we always have the following:
\begin{enumerate}
    \item $M$ has not zero columns;
    \item $|E_A| = m \leq n_e <\infty$;
    \item $A$ is column-finite.
\end{enumerate}
\end{proposition}
\begin{proof} In fact, the compactness of $X_A$ implies that $\mathcal{D}_A$ is unital, and so is $\mathcal{O}_A$. By Proposition 8.5 of \cite{EL1999}, the vector zero cannot be a limit point of the sequence of columns of $A$ (product topology in $\{0,1\}^\mathbb{N}$), and since $\lim_{k \to \infty}(A(n,k))_{k \in \mathbb{N}} = 0$ for every $n > n_e$, we necessarily have that $(1)$ holds. On the other hand, Proposition 46 of \cite{BEFR2018} gives a bijection between the elements of $E_A$ and the non-zero limit points of the columns of $A$, and since the columns of $M$ are distinct each other, by Definition \ref{def:Periodic_Renewal_Class}, there are exactly $m$ limit points in $(A(\cdot,k))_{k \in \mathbb{N}}$, namely, the vectors consisting in a column of $M$ for the rows $1 \leq k \leq n_e$ and zero in the remaining entries. We conclude that $|E_A| = m$. Since for every limit of the columns of $A$ the entries that are $1$ correspond to rows whose the symbols are infinite emitters, we have that there is at least one infinite emitter for each non-zero limit of $(A(\cdot,k))_{k \in \mathbb{N}}$. We conclude that $(2)$ holds. Assertion $(3)$ is straightforward.    
\end{proof}

For the next lemma and theorem, for $X_A$ in the periodic renewal class with the matrix $M$ as in Definition \ref{def:Periodic_Renewal_Class}, $R_k$ will denote the $k$-th column of $M$, $1 \leq k \leq m$. We commit an abuse of notation by denoting the limit point of $A$ by $R_k$ being the vector that coincides with the $k$-th column of $M$ in the entries from 1 to $n_e$, and zero in the remaining entries.

\begin{lemma}\label{lemma:PRC_convergence_classes} Let $X_A$ be a generalized Markov shift space in the periodic renewal class. A sequence $(\xi^n)_{n \in \mathbb{N}}$ in $X_A\setminus E_A$ converges to $(e, R_p)$, $1 \leq p \leq m$, if and only if it eventually satisfies
\begin{equation}\label{eq:PRC_sequences}
    \kappa(\xi^n)_0 = (I + k_n m + p),
\end{equation}
where $(k_n)_{\mathbb{N}}$ is a sequence of natural numbers s.t. $\lim_n k_n = \infty$, and $I$ and $m$ are given as in Definition \ref{def:Periodic_Renewal_Class}.
\end{lemma}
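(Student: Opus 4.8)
The plan is to reduce the convergence $\xi^n \to (e,R_p)$, taking place in $X_A \subseteq \{0,1\}^{\mathbb{F}_\mathbb{N}}$, to two explicit conditions on the integers $c_n := \kappa(\xi^n)_0$, and then to read off the arithmetic progression from the periodic structure of $A$. Since the topology of $X_A$ is inherited from the product topology on $\{0,1\}^{\mathbb{F}_\mathbb{N}}$, convergence is coordinatewise: $\xi^n \to (e,R_p)$ if and only if for every $g \in \mathbb{F}_\mathbb{N}$ one has $\xi^n_g = (e,R_p)_g$ eventually. I would organize the verification by separating the positive coordinates $g = \gamma \in \mathbb{F}^+_\mathbb{N}$ from the coordinates of the form $g = j^{-1}$, $j \in \mathbb{N}$.

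First I would treat the positive coordinates. Since $(e,R_p)$ has empty stem, $(e,R_p)_\gamma = 0$ for every $\gamma \neq e$, while $\xi^n_\gamma = 1$ exactly when $\gamma \in \llbracket \kappa(\xi^n)\rrbracket$; hence these coordinates match eventually iff the first letter $c_n$ escapes every finite set, i.e. $c_n \to \infty$. This is precisely the content of Lemma \ref{lemma:exploding_first_coordinate_implies_limit_points_in_E_A} applied to the convergent (hence $E_A$-limiting) sequence $(\xi^n)$. For the single negative coordinates, I would use rule (R4) at $g = e$, $k = c_n$ (legitimate because $\xi^n_e = \xi^n_{c_n} = 1$) to obtain the identity $\xi^n_{j^{-1}} = A(j,c_n)$ for all $j$, whereas $(e,R_p)_{j^{-1}} = (R_p)_j$ by the definition of the root. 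Thus the coordinates $j^{-1}$ match eventually iff $A(\cdot, c_n) \to R_p$ in the product topology on $\{0,1\}^{\mathbb{N}}$.

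It then remains to see that these two families of coordinates suffice, i.e. that once the positive coordinates vanish and $A(\cdot,c_n) \to R_p$, every remaining coordinate of $\xi^n$ converges to that of $(e,R_p)$. Here I would invoke that elements of $X_A$ are completely determined by their stem and root, and run an induction on word length using (R2) and (R4): a deeper negative coordinate such as $\xi^n_{j^{-1}l^{-1}}$ equals $A(j,c_n)A(l,j)$, while $(e,R_p)_{j^{-1}l^{-1}} = (R_p)_j A(l,j)$, so its convergence is forced by the level-one convergence together with the fixed matrix $A$; the same bookkeeping handles longer words, and connectivity (R2) rules out filled mixed words. This reduces the whole statement to the equivalence $\xi^n \to (e,R_p) \iff c_n \to \infty \text{ and } A(\cdot,c_n)\to R_p$.

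Finally I would convert this into the arithmetic form using Definition \ref{def:Periodic_Renewal_Class}. Given $c_n \to \infty$, eventually $c_n \ge I$ and $c_n > n_e$; by $(vi)$ the entries of $A(\cdot,c_n)$ in rows $> n_e$ vanish for large $c_n$ (matching $R_p$, which is supported on $\{1,\dots,n_e\}$), so condition (B) reduces to the behaviour of the first $n_e$ rows. By the $m$-periodicity in $(v)(b)$, this truncated column equals the column of $M$ selected by the residue of $c_n$ modulo $m$, and by the distinctness of the columns of $M$ in $(v)(a)$ it equals $R_p$ iff $c_n$ lies in the single residue class attached to $R_p$, namely $c_n = I + k_n m + p$ in the indexing convention of Definition \ref{def:Periodic_Renewal_Class}. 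Combined with $c_n \to \infty$, the fixed offset forces $k_n \to \infty$; conversely this progression yields both $c_n \to \infty$ and $A(\cdot,c_n) \to R_p$, closing the equivalence. The main obstacle I anticipate is the coordinate reduction of the third paragraph—carefully showing, via (R2)–(R4), that convergence is governed entirely by the positive coordinates and the level-one root—together with the off-by-one bookkeeping matching the residue class of $c_n$ to the column index of $M$ that produces exactly the offset $p$.
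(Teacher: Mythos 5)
Your proposal is correct and follows essentially the same route as the paper's proof: both reduce the convergence $\xi^n \to (e,R_p)$ to the two conditions $\kappa(\xi^n)_0 \to \infty$ and $A(\cdot,\kappa(\xi^n)_0) \to R_p$, and then extract the arithmetic progression from the $m$-periodicity and the pairwise distinctness of the columns of $M$. The only difference is that you verify the coordinatewise convergence in $\{0,1\}^{\mathbb{F}_\mathbb{N}}$ directly from rules (R1)--(R4), whereas the paper outsources that equivalence to the root--column-limit correspondence of Proposition \ref{prop:PRC_properties} (via Proposition 46 of \cite{BEFR2018}) and Lemma \ref{lemma:exploding_first_coordinate_implies_limit_points_in_E_A}.
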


\begin{proof} Supose that $\xi_n \to (e, R_p)$. By Proposition \ref{prop:PRC_properties}, there are exactly $m$ non-zero limit points of the columns of $A$, and hence for large enough $n$ we have $A(i,\kappa(\xi_n)_0) = (R_p)_i$, $1 \leq i \leq m$. Since the columns of $M$ are pairwise disjoint, we have that $\kappa(\xi_n)_0 = (I + k_n m + p)$, for some $k_n \in \mathbb{N}$, and $\lim_n k_n = \infty$ because compactness of $X_A$ and Lemma \ref{lemma:exploding_first_coordinate_implies_limit_points_in_E_A}. Conversely, suppose that \eqref{eq:PRC_sequences} holds for $(\xi^n)_{n \in \mathbb{N}}$. Then, for large enough $n$, $A(\cdot,\kappa(\xi^n)_0) = R_p$, and since $\kappa(\xi^n)_0 \to \infty$, we have that $\xi^n \to (e,R_p)$.
\end{proof}

\begin{remark}\label{rmk:renewal_decaying} By Definition \ref{def:Periodic_Renewal_Class} $(iii)$, the sequence $(\xi^n)_{n \in \mathbb{N}}$ in Lemma \ref{lemma:PRC_convergence_classes} has stem larger than $1$ and $\kappa(\xi^n)_1 = (I + k_n m + p - 1)$ for large enough $n$. 
\end{remark}

\begin{theorem}\label{thm:PRC_continuous_extension_dynamics} For $X_A$ a generalized Markov shift space in the periodic renewal class, the extension of $\sigma$ to $X_A$ given by
\begin{equation}\label{eq:PRC_continuous_extension_dynamics}
    \sigma((e,R_{p+1})) = (e,R_p) \; \text{ and } \; \sigma((e,R_1)) = (e,R_m)
\end{equation}
is continuous.
\end{theorem}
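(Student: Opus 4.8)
The plan is to verify sequential continuity of the extended $\sigma$, which is legitimate because $X_A$ is a compact subset of the metrizable space $\{0,1\}^{\mathbb{F}_\mathbb{N}}$, hence itself compact and metrizable. Two regimes must be treated: the non-empty words $\Sigma_A \sqcup F_A$, where $\sigma$ is unchanged, and the empty words $E_A = \{(e,R_1),\dots,(e,R_m)\}$, where $\sigma$ is defined by \eqref{eq:PRC_continuous_extension_dynamics}. On the open set $\Sigma_A \sqcup F_A$ there is nothing new to prove: by Remark \ref{rmk:preimages_generating_Y_A}, $\sigma$ is a local homeomorphism there, in particular continuous, and since this set is open, any sequence converging to a point of it is eventually inside it, so continuity at such points is inherited.

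The substance is continuity at an empty word $(e,R_p)$. First I would fix a sequence $\xi^n \to (e,R_p)$ and split its index set into $N_1 = \{n : \xi^n \in E_A\}$ and $N_2 = \{n : \xi^n \notin E_A\}$; it suffices to show that along each infinite part the image converges to $\sigma((e,R_p))$. Along $N_1$ this is immediate: $E_A$ is finite (Proposition \ref{prop:PRC_properties}) and $X_A$ is Hausdorff, so a sequence in the finite set $E_A$ converging to $(e,R_p)$ is eventually equal to $(e,R_p)$, whence $\sigma(\xi^n) = \sigma((e,R_p))$ for large $n \in N_1$. Along $N_2$, Lemma \ref{lemma:PRC_convergence_classes} gives that eventually $\kappa(\xi^n)_0 = I + k_n m + p$ with $k_n \to \infty$, and Remark \ref{rmk:renewal_decaying} (using Definition \ref{def:Periodic_Renewal_Class}$(iii)$, i.e.\ $A(i+1,i)=1$) gives that the stems have length at least two with $\kappa(\xi^n)_1 = I + k_n m + (p-1)$. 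Consequently $\sigma(\xi^n)$ is again a non-empty word, i.e.\ an element of $X_A \setminus E_A$, with $\kappa(\sigma(\xi^n))_0 = \kappa(\xi^n)_1 = I + k_n m + (p-1)$.

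The final step is the modular bookkeeping. When $p \geq 2$ this reads $\kappa(\sigma(\xi^n))_0 = I + k_n m + (p-1)$ with residue $p-1 \in \{1,\dots,m-1\}$ and $k_n \to \infty$, so Lemma \ref{lemma:PRC_convergence_classes} applied to $(\sigma(\xi^n))_{n}$ yields $\sigma(\xi^n) \to (e,R_{p-1}) = \sigma((e,R_p))$. When $p=1$ the same quantity equals $I + (k_n-1)m + m$, i.e.\ residue $m$ with outer index $k_n - 1 \to \infty$, so the lemma gives $\sigma(\xi^n) \to (e,R_m) = \sigma((e,R_1))$. In both cases the $N_2$-images converge to $\sigma((e,R_p))$, and combined with the $N_1$-part (two subsequences exhausting all indices and converging to the same limit) this proves $\sigma(\xi^n) \to \sigma((e,R_p))$.

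I expect the only delicate point to be this wrap-around at $p=1$: one must re-index the arithmetic progression modulo $m$ so that Lemma \ref{lemma:PRC_convergence_classes} applies with parameter $m$ (rather than a spurious $0$), and check that the outer index $k_n-1$ still tends to infinity. Everything else is a direct assembly of Lemma \ref{lemma:exploding_first_coordinate_implies_limit_points_in_E_A}, Lemma \ref{lemma:PRC_convergence_classes}, and Remark \ref{rmk:renewal_decaying}, with compactness of $X_A$ converting the ``unique limit point'' information into genuine convergence.
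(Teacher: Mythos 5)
Your proof is correct and follows essentially the same route as the paper's: reduce to sequences of non-empty-word configurations converging to $(e,R_p)$, read off the first two stem symbols via Lemma \ref{lemma:PRC_convergence_classes} and Remark \ref{rmk:renewal_decaying}, observe $\kappa(\sigma(\xi^n))_0=\kappa(\xi^n)_1$, and apply the lemma once more. The extra steps you supply (discarding the finitely recurring $E_A$-terms, continuity at non-empty words, and the explicit re-indexing $I+k_nm+1-1=I+(k_n-1)m+m$ in the wrap-around case $p=1$) are exactly the details the paper compresses into ``without loss of generality'' and ``proved similarly.''
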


\begin{proof} Let $(\xi^n)_{n \in \mathbb{N}}$ be a sequence converging to $(e,R_{p+1})$. Without loss of generality, we may assume that the sequence does not contain terms that equals to $(e,R_{p+1})$. By Lemma \ref{lemma:PRC_convergence_classes} and Remark \ref{rmk:renewal_decaying} the first two terms of the stem of $\xi^n$ are given by
\begin{equation}\label{eq:exploding_sequence_to_R_p_plus_1}
    \kappa(\xi^n)_{0} = (I + k_n m + p + 1) \; \text{ and } \; \kappa(\xi^n)_{1} = (I + k_n m + p),
\end{equation}
for large enough $n$, and hence $\kappa(\sigma (\xi^n))_0 = \sigma(\kappa(\xi^n))_0 = \kappa(\xi^n)_{1}$, and again by Lemma \ref{lemma:PRC_convergence_classes} $\sigma(\xi^n) \to (e, R_p)$. The case $\xi^n \to (e,R_1)$ is proved similarly. We conclude that $\sigma$ is continuous.
\end{proof}

For understanding purposes, we illustrate the proof of Theorem \ref{thm:PRC_continuous_extension_dynamics}. For simplicity, take $k_n = n-1$ in \eqref{eq:exploding_sequence_to_R_p_plus_1}, and observe that the elements of the root of $\sigma(\xi^n)$ are precisely those that correspond to the entries $1$ in the column $j=I + p + (n-1)m$ of $A$. When we take the limit $n \to \infty$ for $\sigma(\xi^n)$, its root is precisely the limit of the columns of $A$ mentioned above, as shown in Figure~\ref{fig:Periodic_Renewal_Matrix_Class}, corresponding to the limit point corresponding to the column vector whose entries $1$ are the elements of $R_p$ (observe that the sequence of columns is constant inside the submatrices M). We conclude that the limit corresponds to the element $\lim_n \sigma(\xi^n) = (e, R_p)$.

\begin{figure}[h]
    \centering
    \input{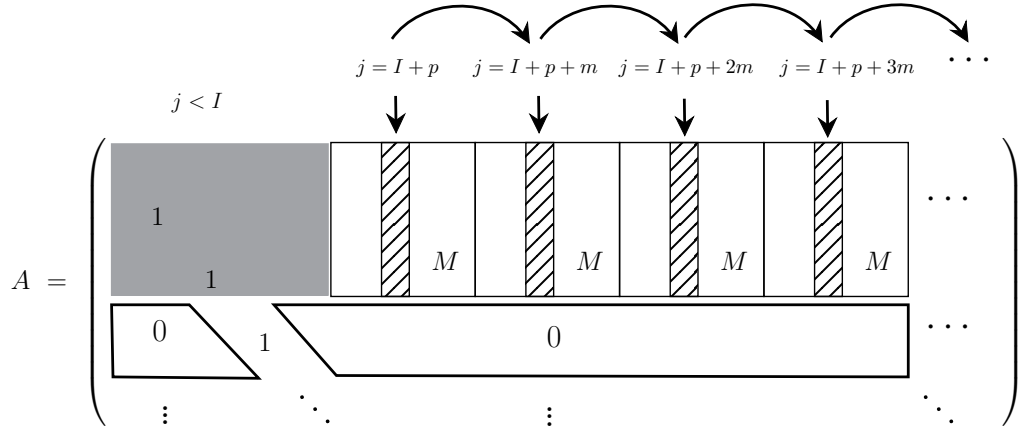}
    \caption{Example of converging sequence of columns of a transition matrix $A$ belonging to the periodic renewal class.\label{fig:Periodic_Renewal_sketch_of_proof}}
\end{figure}

We observe that the same proof in Theorem \ref{thm:PRC_continuous_extension_dynamics} holds when we add a finite number of edges in the symbolic graph whose source symbols are strictly greater than $n_e$, i.e., if we include a finite number of 1's in the zero regions presented in Figure \ref{fig:Periodic_Renewal_Matrix_Class}. The next example shows that adding an infinite number of extra symbols in the aforementioned region may turn the extension of $\sigma$ discontinuous.

\begin{example} Let the Markov shift space be with transition matrix $A$ defined as $A(1, n) = A(2, 2n) = A(n + 1, n)=A(4n,2n) = 1$, and  for all $n\in\mathbb{N}_0$. The empty words' set is given by $E_A=\{(e,\{1\}), (e,\{1,2\})\}$. Consider the sequences $\xi^n$ and $\eta^n$ in $\Sigma_A$, such that
\begin{equation*}
    \kappa(\xi^n) = 2n(2n-1)\cdots 1^\infty \; \text{ and } \kappa(\eta^n) = 4n(2n)(2n-1)\cdots 1^\infty.
\end{equation*}
We have that $\xi^n,\eta^n\to(e,\{1,2\})$, but $\sigma(\xi^n)=\eta^n\to(e,\{1,2\})$ and $\sigma(\eta^n)\to(e,\{1\})$. Hence, it is not possible to obtain a continuous extension of $X_A$.
\end{example}

\begin{example}[Pair renewal shift] \label{exa:Pair_renewal_shift_as_PRC}
We show that the generalized pair renewal shift in Example \ref{exa:pair_renewal_shift} 
belongs to the periodic renewal class. In fact, it is straightforward that $A$ is irreducible, and by the Gelfand representation theorem for commutative C$^*$-algebras $X_A$ is compact because $\mathcal{D}_A$ is unital ($Q_1 = 1$). Item $(iii)$ of Definition \ref{def:Periodic_Renewal_Class} is satisfied by construction, and there are only two infinite emitters, namely $1$ and $2$. Moreover, the matrix $M$ from Definition \ref{def:Periodic_Renewal_Class} $(v)$ is given by
\begin{equation*}
    M = \begin{pmatrix}
            1 & 1 \\
            0 & 1
        \end{pmatrix}
\end{equation*}
for $I = 3$. By Theorem \ref{thm:PRC_continuous_extension_dynamics}, the continuous extension of $\sigma$ acts on $E_A$ as it is shown in Figure \ref{fig:dynamics_empty_pair_renewal}.
\begin{figure}[h]
    \centering
    \scalebox{.8}{

\tikzset{every picture/.style={line width=0.75pt}} 

\begin{tikzpicture}[x=0.75pt,y=0.75pt,yscale=-1,xscale=1]

\draw  [fill={rgb, 255:red, 0; green, 0; blue, 0 }  ,fill opacity=1 ] (89.94,181.01) .. controls (89.88,175.07) and (94.65,170.21) .. (100.59,170.16) .. controls (106.53,170.1) and (111.38,174.87) .. (111.44,180.81) .. controls (111.49,186.75) and (106.72,191.6) .. (100.79,191.66) .. controls (94.85,191.71) and (89.99,186.94) .. (89.94,181.01) -- cycle ;
\draw  [fill={rgb, 255:red, 0; green, 0; blue, 0 }  ,fill opacity=1 ] (185.94,73.01) .. controls (185.88,67.07) and (190.65,62.21) .. (196.59,62.16) .. controls (202.53,62.1) and (207.38,66.87) .. (207.44,72.81) .. controls (207.49,78.75) and (202.72,83.6) .. (196.79,83.66) .. controls (190.85,83.71) and (185.99,78.94) .. (185.94,73.01) -- cycle ;
\draw [line width=2.25]    (185.94,73.01) .. controls (130.99,89.41) and (116.96,124.43) .. (102.2,165.65) ;
\draw [shift={(100.59,170.16)}, rotate = 289.65] [fill={rgb, 255:red, 0; green, 0; blue, 0 }  ][line width=0.08]  [draw opacity=0] (16.07,-7.72) -- (0,0) -- (16.07,7.72) -- (10.67,0) -- cycle    ;
\draw [line width=2.25]    (415.79,192.66) .. controls (305.12,234.58) and (221.68,235.98) .. (104.35,192.97) ;
\draw [shift={(100.79,191.66)}, rotate = 20.4] [fill={rgb, 255:red, 0; green, 0; blue, 0 }  ][line width=0.08]  [draw opacity=0] (16.07,-7.72) -- (0,0) -- (16.07,7.72) -- (10.67,0) -- cycle    ;
\draw  [fill={rgb, 255:red, 0; green, 0; blue, 0 }  ,fill opacity=1 ] (404.94,182.01) .. controls (404.88,176.07) and (409.65,171.21) .. (415.59,171.16) .. controls (421.53,171.1) and (426.38,175.87) .. (426.44,181.81) .. controls (426.49,187.75) and (421.72,192.6) .. (415.79,192.66) .. controls (409.85,192.71) and (404.99,187.94) .. (404.94,182.01) -- cycle ;
\draw  [fill={rgb, 255:red, 0; green, 0; blue, 0 }  ,fill opacity=1 ] (485.94,74.01) .. controls (485.88,68.07) and (490.65,63.21) .. (496.59,63.16) .. controls (502.53,63.1) and (507.38,67.87) .. (507.44,73.81) .. controls (507.49,79.75) and (502.72,84.6) .. (496.79,84.66) .. controls (490.85,84.71) and (485.99,79.94) .. (485.94,74.01) -- cycle ;
\draw [line width=2.25]    (485.94,73.01) .. controls (422.64,72.04) and (418.26,128.22) .. (415.88,166.48) ;
\draw [shift={(415.59,171.16)}, rotate = 273.61] [fill={rgb, 255:red, 0; green, 0; blue, 0 }  ][line width=0.08]  [draw opacity=0] (16.07,-7.72) -- (0,0) -- (16.07,7.72) -- (10.67,0) -- cycle    ;
\draw [line width=2.25]    (100.59,170.16) .. controls (223.13,129.62) and (316.17,134.83) .. (411.25,169.55) ;
\draw [shift={(415.59,171.16)}, rotate = 200.52] [fill={rgb, 255:red, 0; green, 0; blue, 0 }  ][line width=0.08]  [draw opacity=0] (16.07,-7.72) -- (0,0) -- (16.07,7.72) -- (10.67,0) -- cycle    ;
\draw  [fill={rgb, 255:red, 0; green, 0; blue, 0 }  ,fill opacity=1 ] (325.5,73.2) .. controls (325.45,67.26) and (330.22,62.4) .. (336.15,62.35) .. controls (342.09,62.29) and (346.95,67.06) .. (347,73) .. controls (347.05,78.94) and (342.28,83.79) .. (336.35,83.85) .. controls (330.41,83.9) and (325.55,79.13) .. (325.5,73.2) -- cycle ;
\draw [line width=2.25]    (347,73) .. controls (405.56,73) and (417.12,128.3) .. (415.82,166.48) ;
\draw [shift={(415.59,171.16)}, rotate = 273.61] [fill={rgb, 255:red, 0; green, 0; blue, 0 }  ][line width=0.08]  [draw opacity=0] (16.07,-7.72) -- (0,0) -- (16.07,7.72) -- (10.67,0) -- cycle    ;

\draw (20,169.4) node [anchor=north west][inner sep=0.75pt]  [font=\Large]  {$( e,\{1\})$};
\draw (168,30.4) node [anchor=north west][inner sep=0.75pt]  [font=\Large]  {$( 1,\{1\})$};
\draw (430,170.4) node [anchor=north west][inner sep=0.75pt]  [font=\Large]  {$( e,\{1,2\})$};
\draw (454,30.4) node [anchor=north west][inner sep=0.75pt]  [font=\Large]  {$( 1,\{1,2\})$};
\draw (290,30.4) node [anchor=north west][inner sep=0.75pt]  [font=\Large]  {$( 2,\{1,2\})$};

\end{tikzpicture}
}
    \caption{The continuously extended dynamics on $X_A$ for the pair renewal shift for the empty word elements. The arrows point from an element of the space to its image under $\sigma$, where only the only. The elements presented above are written as an ordered pair stem-root.}
    \label{fig:dynamics_empty_pair_renewal}
\end{figure}
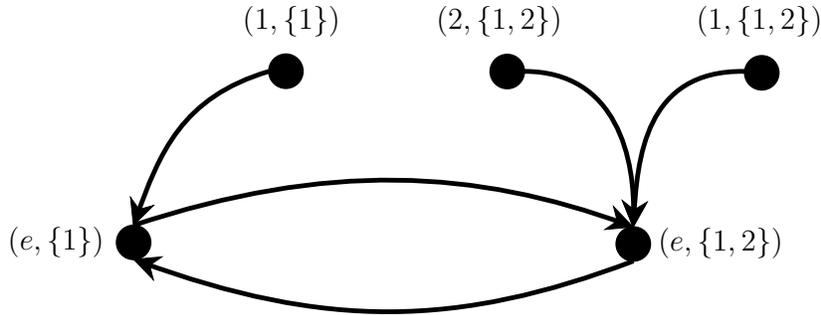
\end{example}

A generalization of the pair renewal shift for any finite quantity of empty words is presented next.

\begin{example}[N-renewal shift] Fix $N \in \mathbb{N}$, $N \geq 2$, and consider the transition matrix given by $A(1,n)=A(n+1,n)=A(k,nN+k)=1$, for every $1\leq k\leq N$ and $n\geq 1$, and zero otheriwise, that is,
\[
A=\bordermatrix{ & 1 & 2 & 3 & \cdots & N+1 & N+2 & N+3 & N+4 & \cdots & 2N & \cdots \cr
              1 & 1 & 1 & 1 & \cdots & 1 & 1 & 1 & 1 &  \cdots & 1 & \cdots \cr
              2 & 1 & 0 & 0 & \cdots & 0 & 1 & 0 & 0 &  \cdots & 0 & \cdots \cr
              3 & 0 & 1 & 0 & \cdots & 0 & 0 & 1 & 0 &  \cdots & 0 & \cdots \cr
              4 & 0 & 0 & 1 & \cdots & 0 & 0 & 0 & 1 &  \cdots& 0 & \cdots \cr
              \vdots & \vdots & \vdots & \vdots & \cdots & \vdots & \vdots & \vdots & \vdots & \cdots & \vdots &  \cdots \cr
              N & 0 & 0 & 0 & \cdots & 0 & 0 & 0 & 0 & \cdots & 1 &  \cdots \cr
              N+1 & 0 & 0 & 0 & \cdots & 0 & 0 & 0 & 0 & \cdots & 0 &  \cdots \cr
              N+2 & 0 & 0 & 0 & \cdots & 1 & 0 & 0 & 0 & \cdots & 0&   \cdots \cr
              \vdots & \vdots & \vdots & \vdots & \cdots & \vdots & \vdots & \vdots & \vdots & \cdots & \vdots & \ddots},
\]
In this example, $X_A$ belongs to the periodic renewal class. It is immediate that $A$ is irreducible. And by similar arguments presented in Example \ref{exa:Pair_renewal_shift_as_PRC}, $X_A$ is compact, and item $(iii)$ of Definition \ref{def:Periodic_Renewal_Class} is satisfied. In addition, there are exactly $N$ finite emitters, namely the elements of $\{1,2,\dots,N\}$, and $N$ empty word configuration, namely $(e,\{1,n\})$, $1 \leq n \leq N$. The matrix $M$ from Definition~\ref{def:Periodic_Renewal_Class} $(v)$ is given by
\begin{equation*}
    M = \bordermatrix{  & 1 & 2 & 3 & \cdots & N \cr
              1 & 1 & 1 & 1 & \cdots & 1    \cr
              2 & 0 & 1 & 0 & \cdots & 0    \cr
              3 & 0 & 0 & 1 & \cdots & 0    \cr
              4 & 0 & 0 & 0 & \cdots & 0    \cr
              \vdots & \vdots & \vdots & \vdots & \ddots & \vdots  \cr
              N-1 & 0 & 0 & 0 & \cdots & 0  \cr
              N & 0 & 0 & 0 & \cdots & 1   
        }
\end{equation*}
for $I = N+1$. By Theorem \ref{thm:PRC_continuous_extension_dynamics}, the shift dynamics in the $N$-renewal shift admits a continuous extension, which is shown in Figure \ref{fig:dynamics_N_renewal}.

\begin{figure}[h]
    \centering
    \input{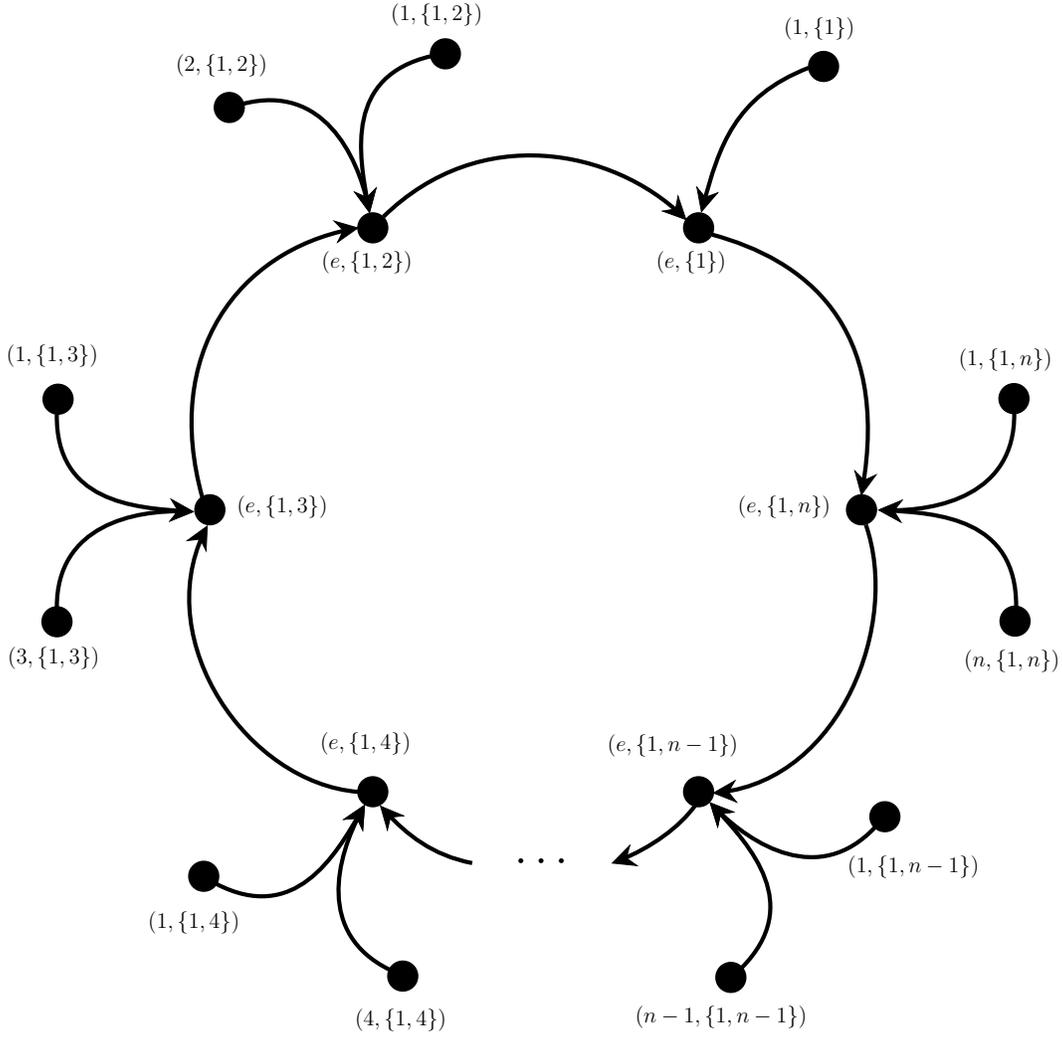}
    \caption{The continuously extended dynamics on $X_A$ for the $N$-renewal shift for the empty word elements. The arrows point from an element of the space to its image under $\sigma$, where only the only. The elements presented above are written as an ordered pair stem-root.}
    \label{fig:dynamics_N_renewal}
\end{figure}
\end{example}

\begin{remark} Both single empty-word and periodic renewal classes are not disjoint neither proper each other. An example that belongs to both classes is the renewal shift. Also, the pair renewal shift is in the periodic renewal class but it does not belong to the single empty-word class because it contains two empty words. On the other hand, the lazy renewal shift belongs is an empty-word shift but it is not a periodic renewal, since its main diagonal is filled with 1's.
\end{remark}

\section{Spectral Radius on column-finite Markov shifts}\label{sec:Spectral Radios on Markov shift}

In the previous section, we showed that the existence of a weighted endomorphism $\alpha$ on $C(X_A)$ whose dual partial map is the shift map $\sigma$ is inherent to the existence of a continuous extension of $\sigma$ to the entire generalized countable Markov shift $X_A$. In particular, we presented transition matrices with such a continuous extension. Now, for generalized countable Markov shifts where we can continuously extend the shift map to $X_A$, we study the spectral radius of $a\alpha$, $a\in \mathcal{D}_A$, by applying the following theorem obtained by Kwa\'sniewski and Lebedev. We state in the following a version in the spirit of our Theorem \ref{Theorem: alpha well-defined iff sigma extended}, where the dynamics is defined everywhere and not partially as in the original result.

\begin{theorem}[{\cite[Theorem 5.1]{KwaLeb2020}}]\label{Teorema Formula radio espectral con algebra general A}
Let $\alpha: \mathfrak{A} \rightarrow \mathfrak{A}$ be an endomorphism of an unital commutative  C$^*$-algebra $\mathfrak{A}$ which preserves the unity. Let $(X, \sigma)$ be the dynamical system dual to $(\mathfrak{A}, \alpha)$. Then, for any $a \in \mathfrak{A}$, the spectral radius of the weighted endomorphism $a\alpha : \mathfrak{A} \rightarrow \mathfrak{A}$ is given by
\begin{equation}
r(a\alpha) = \max_{\mu \in \text{Inv}(X, \sigma)} \exp  \int_{X} \ln |\hat{a}(x)| \, d\mu = \max_{\mu \in \text{Erg}(X, \sigma)} \exp  \int_{X} \ln |\hat{a}(x)| \, d\mu,
\end{equation}
where $\ln(0) = -\infty$, $\exp(-\infty) = 0$, and $a\alpha=a\alpha(\cdot)\in\mathfrak{B}(\mathfrak{A})$.
\end{theorem}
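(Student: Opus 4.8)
The plan is to read the statement as the specialization of \cite[Theorem 5.1]{KwaLeb2020} to the \emph{global} case, in which the endomorphism preserves the unit. By Proposition \ref{proposicion partial map en A}, the condition that $\alpha$ preserves the unit of $\mathfrak{A}$ is exactly equivalent to the dual partial map being everywhere defined, i.e. $\Delta = X$; thus the dual object is a genuine continuous self-map $\sigma : X \to X$ rather than a merely partial one, as anticipated in the hypotheses. The first task is then to reconcile the integral over $X$ in the statement with the integral over the set $\Delta_\infty$ appearing in the original formula. Here $\Delta_\infty$ is the eventual image $\bigcap_{n\geq 0}\sigma^n(X)$. The point to check is that every $\sigma$-invariant probability measure is carried by $\Delta_\infty$: taking $B = X\setminus\sigma(X)$ one has $\sigma^{-1}(B)=\varnothing$, so invariance forces $\mu(X\setminus\sigma(X))=\mu(\varnothing)=0$, and iterating gives $\mu(\Delta_\infty)=1$. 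Consequently $\mathrm{Inv}(X,\sigma)=\mathrm{Inv}(\Delta_\infty,\sigma)$ and $\int_{\Delta_\infty} f\,d\mu = \int_X f\,d\mu$ for every such $\mu$, so the cited formula over $\Delta_\infty$ rewrites verbatim as the asserted formula over $X$.

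For a self-contained derivation I would reconstruct the mechanism of \cite{KwaLeb2020}. Realizing $a\alpha$ through the weighted shift of Proposition \ref{proposicion creacion de los aT a partir del endomorfismo}, the starting point is the spectral-radius identity $r(a\alpha)=\lim_{n\to\infty}\|\alpha(a)\alpha^2(a)\cdots\alpha^n(a)\|^{1/n}$ from \cite[Proposition 2.18]{KwaLeb2020}. Since $\mathfrak{A}$ is commutative and $\sigma$ is globally dual to $\alpha$, equation \eqref{ecuacion partial map dual de alpha A} gives $\widehat{\alpha(a)}=\widehat{a}\circ\sigma$ on all of $X$, hence $\widehat{\alpha^{k}(a)}=\widehat{a}\circ\sigma^{k}$; this is the same intertwining recorded in Theorem \ref{Theorem:shift_dual_is_theta}. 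As the norm on a commutative C$^*$-algebra is the sup-norm of the Gelfand transform,
\begin{equation*}
\|\alpha(a)\alpha^2(a)\cdots\alpha^n(a)\| = \sup_{x\in X}\prod_{k=1}^{n}\bigl|\widehat{a}(\sigma^{k}(x))\bigr| = \sup_{x\in X}\exp\Bigl(\sum_{k=1}^{n}\ln\bigl|\widehat{a}(\sigma^{k}(x))\bigr|\Bigr).
\end{equation*}
Taking $n$-th roots and pulling the supremum through the (monotone) exponential yields
\begin{equation*}
r(a\alpha)=\lim_{n\to\infty}\exp\Bigl(\sup_{x\in X}\tfrac1n\sum_{k=1}^{n}\ln\bigl|\widehat{a}(\sigma^{k}(x))\bigr|\Bigr).
\end{equation*}

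The last step applies the variational principle for the limiting supremum of empirical averages \cite[Theorem 3.5]{KwaLeb2020} to $f:=\ln|\widehat{a}|$, which is continuous and bounded above by $\ln\|a\|$ under the conventions $\ln 0=-\infty$, $\exp(-\infty)=0$. That principle turns the limit of the Birkhoff averages of $f$ into $\max_{\mu\in\mathrm{Inv}(X,\sigma)}\int_X f\,d\mu$, and further into the maximum over ergodic measures; the equality of the two maxima is the standard consequence of the ergodic decomposition together with the affineness of $\mu\mapsto\int_X f\,d\mu$. Exponentiating produces the two displayed expressions for $r(a\alpha)$; the shift of summation index (starting at $k=1$ rather than $k=0$) is harmless, since $\int f\circ\sigma^k\,d\mu=\int f\,d\mu$ for invariant $\mu$ and the boundary term is $O(1/n)$. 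The main obstacle I expect is analytic rather than structural: the integrand is only extended-real valued and equals $-\infty$ on the zero set of $\widehat{a}$, so care is needed to invoke the variational principle in its version for functions bounded from above (where a measure charging $\{\widehat{a}=0\}$ simply contributes $-\infty$, automatically excluded from the maximum), and to justify interchanging $\lim_n$, $\sup_x$, and $\exp$ exactly as above. The remaining delicate point, already handled in the first paragraph, is confirming that restricting the integration from $\Delta_\infty$ to $X$ loses no invariant measure.
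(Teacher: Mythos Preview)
The paper does not prove this statement: it is quoted verbatim as \cite[Theorem 5.1]{KwaLeb2020} and used as a black box, so there is no in-paper proof to compare against. Your outline is a faithful reconstruction of the mechanism in \cite{KwaLeb2020}: reduce to the global case via Proposition~\ref{proposicion partial map en A}, use the spectral-radius identity $r(a\alpha)=\lim_n\|\alpha(a)\cdots\alpha^n(a)\|^{1/n}$, translate to Birkhoff sums of $\ln|\widehat a|$ via the Gelfand transform, and finish with the variational principle for empirical averages. The observation that $\mathrm{Inv}(X,\sigma)=\mathrm{Inv}(\Delta_\infty,\sigma)$ is correct and is indeed what collapses the general formula over $\Delta_\infty$ to one over $X$.
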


In particular, Section 6 of \cite{KwaLeb2020} shows an explicit formula for the spectral radius on standard compact Markov shifts, that is, when the alphabet is finite. Our main result is a step toward generalizing their formula for the countably infinite alphabet case. In the final section, our main example is the generalized renewal shift.

\begin{remark}
\label{Remark: characterization of invariant measures column finite Markov shift}
Given a column-finite transition matrix $A$, suppose that the shift admits a (unique) continuous extension on $X_A$. We have $\text{Inv}(\Sigma_A \cup E_A,\sigma)=\{\lambda\mu+(1-\lambda)\nu : \lambda \in [0,1], \mu \in \text{Inv}(\Sigma_A,\sigma) \text{ and }\nu \in \text{Inv}(E_A,\sigma)\}$. In particular, for both the cases single empty-word and periodic renewal classes of generalized shifts, Theorem \ref{thm:PRC_continuous_extension_dynamics} gives that the set of invariant measures supported on $E_A$ is singleton, i.e., $\text{Inv}(E_A,\sigma)=\{\nu\}$.
\end{remark}

\begin{definition} \label{def:J_A}
    Given a transition matrix $A$ and a symbol $i \in \mathbb{N}$, the \emph{range} of the symbol $i$ is the set $r_A(i):= \{j \in \mathbb{N}: A(i,j) = 1\}$. We say that a subset $F \subseteq \mathbb{N}$ \textit{has infinite common range} when $\left|\bigcap_{i \in F} r_A(i)\right|=\infty$, and we denote by $J_A \subseteq I_A$ (see Definition \ref{I_A}) the set of pairs $(\gamma,F)$ that either $\gamma \neq e$ or $\gamma = e$ and $F$ has not infinite common range.
\end{definition}


\begin{proposition}\label{Proposition: formula spectrum radius of element in span D_A trasitive case}
    Let $A$ be a column-finite transition matrix, and suppose that $\sigma$ is continuously extended to $X_A$. Then,
    
    (i) if $a \in \mathcal{D}_A$, the spectral radius of the weighted endomorphism $a\alpha$ is given by
    \begin{equation}\label{ecuacion de R.E. de aalpha para a en span D_A y lambda no cero transitive case}
        r(a\alpha)=\max_{\mu\in \text{Inv}(\Sigma_A\cup E_A,\sigma)} \exp \int_{\Sigma_A\cup E_A} \ln |\widehat{a}| d\mu;
    \end{equation}
    
    (ii) if $a \in \operatorname{span} \mathcal{G}_A $, write $a=\sum_{w \in W}\lambda_w e_w + \sum_{u \in U}\lambda_u e_u$, where $W = W(a) \subseteq I_A \setminus J_A$ and $U = U(a) \subseteq J_A$ are finite sets. When $\lambda_w = 0$ for all $w \in W$, we have
   \begin{equation}\label{ecuacion de Ra. esp. de aalpha para a en span D_A y lambda cero transitive case}
        r(a\alpha)=\max_{\mu\in\text{Inv}(\Sigma_A,\sigma)}\exp \int_{\Sigma_A} \ln |\widehat{a}| d\mu.
    \end{equation}
\end{proposition}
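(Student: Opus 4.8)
The plan is to derive both statements from the Kwaśniewski--Lebedev formula of Theorem \ref{Teorema Formula radio espectral con algebra general A}, applied with $\mathfrak{A} = \mathcal{D}_A$ and $X = X_A$. Its hypotheses hold: since $\sigma$ extends continuously to $X_A$, the $*$-endomorphism $\alpha$ exists and preserves the unit by Corollary \ref{proposition alpha preserve unity}, and $\sigma$ is exactly the globally defined dynamics dual to $\alpha$ (Theorem \ref{Theorem:shift_dual_is_theta}, together with the Gelfand identity \eqref{eq:evaluating_e_on_varphi_omega_via_characteristic_functions}). Hence
\[
    r(a\alpha) = \max_{\mu \in \text{Inv}(X_A, \sigma)} \exp \int_{X_A} \ln|\widehat{a}| \, d\mu .
\]
The heart of part (i) is then to show that $\Sigma_A \cup E_A$ carries full mass for every $\mu \in \text{Inv}(X_A,\sigma)$, i.e. $\mu(F_A) = 0$. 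To do this I would stratify $F_A = \bigsqcup_{n\ge 1} F_A^{(n)}$ by stem length, $F_A^{(n)} = \{\xi \in F_A : |\kappa(\xi)| = n\}$; each $F_A^{(n)}$ is Borel since $Y_A$ is countable and $X_A$ is compact metrizable. Because the shift drops the stem length by one while $\sigma(E_A) \subseteq E_A$ by Proposition \ref{thm:E_A_shift_invariant}, any $\xi \in F_A^{(n)}$ satisfies $\sigma^{k}(\xi) \notin F_A^{(n)}$ for all $k \ge 1$; thus no point of $F_A^{(n)}$ is recurrent relative to $F_A^{(n)}$, and Poincaré's recurrence theorem forces $\mu(F_A^{(n)}) = 0$. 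Summing over $n$ yields $\mu(F_A) = 0$.

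Granting this, part (i) follows: restriction to the $\sigma$-absorbing, $\mu$-conull set $\Sigma_A \cup E_A$ (and extension by zero on $F_A$) is an affine bijection between $\text{Inv}(X_A,\sigma)$ and $\text{Inv}(\Sigma_A\cup E_A,\sigma)$ preserving the value of $\int \ln|\widehat a|$. Replacing both the integration domain and the domain of the maximum gives \eqref{ecuacion de R.E. de aalpha para a en span D_A y lambda no cero transitive case}.

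For part (ii) I would write $a = \sum_{u \in U}\lambda_u e_u$ with $U \subseteq J_A$ (the hypothesis $\lambda_w = 0$ erases the $W$-terms). The decisive observation is that $\widehat a = \sum_{u\in U}\lambda_u \mathbbm{1}_{V_u}$ vanishes identically on $E_A$, which I would verify through the two defining cases of $J_A$ in Definition \ref{def:J_A}. If $u = (\gamma,F)$ with $\gamma \ne e$, an empty-stem configuration $\xi$ has $\xi_\gamma = 0$, so $E_A \cap C_\gamma = \varnothing$ and $\mathbbm{1}_{V_{\gamma,F}}$ vanishes on $E_A$. If $\gamma = e$ and $\bigcap_{j\in F} r_A(j)$ is finite, then $\xi \in V_{e,F}\cap E_A$ would require $F \subseteq R_\xi$; since the root $R_\xi$ is read off a limit point of the columns $(A(\cdot,c_n))_n$ with $c_n \to \infty$, this forces $c_n \in \bigcap_{j\in F} r_A(j)$ for all large $n$, contradicting finiteness, so $V_{e,F}\cap E_A = \varnothing$ as well. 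Consequently $\ln|\widehat a| \equiv -\infty$ on $E_A$.

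Finally I would insert this into part (i) via the decomposition of Remark \ref{Remark: characterization of invariant measures column finite Markov shift}: any $\mu \in \text{Inv}(\Sigma_A\cup E_A,\sigma)$ is $\mu = \lambda\mu_\Sigma + (1-\lambda)\nu$ with $\mu_\Sigma \in \text{Inv}(\Sigma_A,\sigma)$ and $\nu \in \text{Inv}(E_A,\sigma)$, whence $\int \ln|\widehat a|\,d\mu = \lambda \int_{\Sigma_A}\ln|\widehat a|\,d\mu_\Sigma + (1-\lambda)\int_{E_A}\ln|\widehat a|\,d\nu$. Since the second integrand is identically $-\infty$, any $\mu$ with $\lambda < 1$ contributes $-\infty$ and cannot realize the maximum (if every candidate is $-\infty$, both formulas equal $0$); so the maximum is attained at $\lambda = 1$, that is, on $\text{Inv}(\Sigma_A,\sigma)$, giving \eqref{ecuacion de Ra. esp. de aalpha para a en span D_A y lambda cero transitive case}. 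The principal obstacle is the full-measure reduction $\mu(F_A)=0$ behind (i); the recurrence argument is what secures it, and once that is in hand the vanishing of $\widehat a$ on $E_A$ makes (ii) essentially automatic.
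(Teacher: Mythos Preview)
Your proof is correct and follows the same overall architecture as the paper: apply Theorem \ref{Teorema Formula radio espectral con algebra general A}, reduce from $X_A$ to $\Sigma_A\cup E_A$ for (i), and then show $\widehat a|_{E_A}\equiv 0$ to drop to $\Sigma_A$ for (ii). Two points of comparison are worth recording.

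For (i), the paper simply asserts that $\Sigma_A\cup E_A$ has full measure for every invariant $\mu$; your Poincar\'e recurrence argument via the stem-length stratification $F_A=\bigsqcup_{n\ge 1}F_A^{(n)}$ makes this explicit. One small correction: $Y_A$ is not countable in general, so that is not the right reason for measurability of $F_A^{(n)}$; but $F_A^{(n)}=\bigcup_{|\gamma|=n}\bigl(C_\gamma\setminus\bigcup_{i\in\mathbb N}C_{\gamma i}\bigr)$ is visibly Borel, so your argument stands.

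For (ii), the paper and you establish $\widehat a(E_A)=\{0\}$ by different routes. The paper first proves that $\supp\widehat a\cap\Sigma_A$ lies in finitely many cylinders $\bigcup_{\ell\le M}[\ell]$ (using that each $u\in J_A$ either has $\gamma\ne e$ or $\gamma=e$ with $\bigcap_{j\in F}r_A(j)$ finite), and then uses density of $\Sigma_A$ and continuity of $\widehat a$ together with $(\omega^n)_0\to\infty$ for any sequence approaching an empty word. You instead argue structurally, showing $V_u\cap E_A=\varnothing$ for each $u\in J_A$ directly from the root description of empty-word configurations as column limit points. Both are clean; your argument avoids the continuity/limit step, while the paper's approach has the side benefit of producing the finite cylinder cover that is reused in the proof of Theorem \ref{Theorem formula spectrum radius of element in span D_A with finite alphabet}.
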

\begin{proof}
    By Theorem \ref{Teorema Formula radio espectral con algebra general A}, and since the set $\Sigma_A\cup E_A$ has full measure for any invariant measure, then Equation \ref{ecuacion de R.E. de aalpha para a en span D_A y lambda no cero transitive case} is verified.
    For the second assertion, if $\lambda_w = 0$ for every $w \in W$, then $a=\sum_{u \in U}\lambda_u e_u$, and in this case we prove the following claim.\medskip

\textbf{Claim:} $\supp \widehat{a} \cap \Sigma_A$ is contained in a finite union of (standard) cylinders.


In fact, $u = (\gamma,F) \in U$, if $\gamma \neq e$, then it is straightforward that $\supp\widehat{e_u}\cap \Sigma_A\subseteq[\gamma]$. Now, suppose that $\gamma = e$, and hence we necessarily have that $F$ has not infinite common range. So there exists $M_u \in \mathbb{N}$ such that $\supp \widehat{e_u} \cap \Sigma_A \subseteq\bigcup_{\ell=1}^{M_u}[\ell]$. In fact, for $\omega\in \Sigma_A$, $\widehat{e_u}(\omega) = \mathbbm{1}_{V_{e,F}}(\omega) =1$ if and only if $$\omega \in V_{e,F} \cap \Sigma_A = \bigcap_{i\in F} \sigma([i]) = \bigcap_{i\in F} \bigcup_{j \in r_A(i)} [j] \subseteq \bigcup_{\ell=1}^{M_u}[\ell],$$
where in the last inclusion we used that $F$ does not have infinite common range. Since $U$ is finite, the claim is proved. Consequently, the support of $\widehat{a}$ is contained in a finite union of positive generalized cylinders, i.e., cylinder sets $C_\beta$ where $\beta \neq e$ is a word whose letters are in $\mathbb{N}$. Hence, for any sequence $\{\omega^n\}_n$ of $\Sigma_A$ such that $\varphi_{\omega^n}\to\varphi_{e,R}$, for some empty word $\varphi_{e,R}\in E_A$, we have that $\widehat{a}(\varphi_{e,R})=\lim_{n\to\infty} \widehat{a}(\varphi_{\omega^n})=0$ because 
$\lim_{n\to\infty}(\omega^n)_0=\infty$, see in \cite{BEFR2018} the Figure 3 and a previous discussion in page 22 of the same work. Hence, $\widehat{a}(E_A)=\{0\}$ and, by Remark \ref{Remark: characterization of invariant measures column finite Markov shift}, 
\begin{align*}
     r(a\alpha)
     &=\max_{\mu\in \text{Inv}(\Sigma_A\cup E_A,\sigma)}\exp\left( \mu(E_A) \ln 0+\int_{\Sigma_A} \ln |\widehat{a}| d\mu. \right) = \max_{\mu\in \text{Inv}(\Sigma_A,\sigma)}\exp\left( \int_{\Sigma_A} \ln |\widehat{a}|d\mu \right).
\end{align*}
\end{proof}

\begin{definition}
    Given a transition matrix $A$ and a subset $\mathcal{A}\subseteq\mathbb{N}$, and consider $A_{\mathcal{A}\times\mathcal{A}}$. The \textit{Markov subshift} is the set $\Sigma_A(\mathcal{A}):=\{x\in \Sigma_A: x_i \in \mathcal{A}, \forall \, i \in \mathbb{N}_0 \}$ equipped with the shift map $\sigma_A:\Sigma_A(\mathcal{A})\to\Sigma_A(\mathcal{A})$ defined by $\sigma_A:=\sigma\vert_{\Sigma_A(\mathcal{A})}$.
\end{definition}

\begin{remark}\label{Remark: one-to one correspondance between small and large Markov shift}
Let $\mathcal{A}\subseteq\mathbb{N}$ be a finite set such that the restriction of $A$ to $\mathcal{A}$ is irreducible. We emphasize that there is a one-to-one correspondence between the sets $ \text{Inv}(\Sigma_A(\mathcal{A}),\sigma_{\mathcal{A}})$ and $\{\mu\in  \text{Inv}(\Sigma_A,\sigma): \mu(\bigcup_{i\in\mathcal{A}}[i])=1 \}$ as follows: for each $\mu\in \text{Inv}(\Sigma_A(\mathcal{A}),\sigma)$ define the measure $\nu(B):=\mu(B\cap\Sigma_A(\mathcal{A}))$, $B$ Borel set, where we have $\nu\in \text{Inv}(\Sigma_A,\sigma)$ and $\nu(\bigcup_{i\in\mathcal{A}}[i])=1$. The converse is given by simply restricting the measures in $\{\mu\in  \text{Inv}(\Sigma_A,\sigma): \mu(\bigcup_{i\in\mathcal{A}}[i])=1 \}$ to the Borel sets of $\Sigma_A(\mathcal{A})$. This correspondence satisfies the following: for every $f:\Sigma_A\to\mathbb{R}\cup\{-\infty\}$ continuous and bounded from above function,

    \begin{equation*}
        \int_{\Sigma_A} f d\nu=\int_{\Sigma_A(\mathcal{A})} f d\nu =\int_{\Sigma_A(\mathcal{A})} fd\mu.
    \end{equation*}
\end{remark}

For compact Markov shifts, i.e., when the alphabet is finite, B. K. Kwa\'sniewski and A. Lebedev introduced in Section 6 of \cite{KwaLeb2020} an endomorphism $\alpha$ on the algebra $C(\Sigma_A)$, where the shift operator acts as a partial map dual to $\alpha$. Moreover, they presented a formula for the spectral radius of weighted endomorphisms for potentials that are constant on cylinders, determined by words with the same length, similar to the one we will present in the next theorem. The resemblance becomes more apparent in the subsequent corollary,  where the main difference is that the corresponding standard Markov shifts under consideration are not even locally compact.


\begin{theorem}\label{Theorem formula spectrum radius of element in span D_A with finite alphabet}
    Given $a=\sum_{u \in U}\lambda_u e_u$ under the same conditions as in Proposition \ref{Proposition: formula spectrum radius of element in span D_A trasitive case} $(ii)$. Then, there exists a finite subset $\mathcal{A}\subseteq\mathbb{N}$ such that the restriction of $A$ to $\mathcal{A}$ is irreducible, and
    
    \begin{equation}\label{ecuacion de Ra. esp. de aalpha para a en span D_A y lambda cero con alfabeto finito}
         r(a\alpha)=\max_{\mu\in\text{Inv}(\Sigma_A(\mathcal{A}),\sigma)}\exp \int_{\Sigma_A} \ln |\widehat{a}| \, d\mu.
    \end{equation}
\end{theorem}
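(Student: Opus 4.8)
The plan is to translate the statement into an ergodic-optimization problem on the underlying standard shift $\Sigma_A$ and then invoke the localization result of \cite{Bissacot2014}. By Proposition \ref{Proposition: formula spectrum radius of element in span D_A trasitive case}$(ii)$ we already know that $r(a\alpha)=\exp\beta_f$, where $f:=\ln|\widehat a|$ is viewed as a function on $\Sigma_A$ with values in $\mathbb{R}\cup\{-\infty\}$ and $\beta_f:=\sup_{\mu\in\text{Inv}(\Sigma_A,\sigma)}\int_{\Sigma_A} f\,d\mu$. So it suffices to produce a finite $\mathcal{A}\subseteq\mathbb{N}$ with $A|_{\mathcal{A}\times\mathcal{A}}$ irreducible such that $\beta_f=\max_{\mu\in\text{Inv}(\Sigma_A(\mathcal{A}),\sigma)}\int f\,d\mu$. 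First I would record two structural features of $f$. Since $|\widehat a|\le\sum_{u\in U}|\lambda_u|$, the potential $f$ is bounded above; and by the Claim in the proof of Proposition \ref{Proposition: formula spectrum radius of element in span D_A trasitive case} there is $M\in\mathbb{N}$ with $\supp\widehat a\cap\Sigma_A\subseteq\bigcup_{\ell=1}^{M}[\ell]$, so $f\equiv-\infty$ on $[\ell]$ for every $\ell>M$; in particular $f$ is coercive. Moreover $\widehat a|_{\Sigma_A}$ is constant on cylinders of length $L:=\max\{|\gamma|:(\gamma,F)\in U\}$, so wherever it is finite $f$ has summable variations. The obstruction to applying \cite{Bissacot2014} verbatim is exactly that $f$ takes the value $-\infty$.

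I would then split into two cases. If $\beta_f=-\infty$, then $r(a\alpha)=0$, and I take any finite $\mathcal{A}$ with $A|_{\mathcal{A}\times\mathcal{A}}$ irreducible (such $\mathcal{A}$ exists by transitivity: a return path through the symbol $1$ produces a cycle, whose symbols form an irreducible finite sub-alphabet carrying a periodic measure). By the correspondence of Remark \ref{Remark: one-to one correspondance between small and large Markov shift}, every $\mu\in\text{Inv}(\Sigma_A(\mathcal{A}),\sigma)$ is the restriction of an element of $\text{Inv}(\Sigma_A,\sigma)$, so $\int f\,d\mu\le\beta_f=-\infty$; hence $\max_{\mu\in\text{Inv}(\Sigma_A(\mathcal{A}),\sigma)}\int f\,d\mu=-\infty$ and both sides agree.

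The main case is $\beta_f>-\infty$. Here I would approximate $f$ from above by genuine real-valued, coercive, summable-variation potentials. Concretely, choose $b\in\mathcal{D}_A$ with $\widehat b=|\widehat a|$ and set $a_n:=b+\frac1n\sum_{\ell=1}^{M}\mathbbm{1}_{[\ell]\cap(\supp\widehat a)^c}+e^{C_n}\sum_{\ell>M}\mathbbm{1}_{[\ell]}$, with $f_n:=\ln\widehat{a_n}|_{\Sigma_A}$, so that $f_n\searrow f$ pointwise and each $f_n$ is real-valued, coercive and of summable variation. The delicate point — and the main obstacle — is to choose the scalars $C_n\to-\infty$ so that the two threshold conditions (C1), (C2) of \cite{Bissacot2014} are met for every $n$ by one and the same finite alphabet $\mathcal{A}$: this is what forces $C_n$ to decay fast enough relative to the (uniformly bounded) variations $V(f_n)$ and to $\beta_f$. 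Granting this, \cite{Bissacot2014} applied to each $f_n$ gives $\beta_{f_n}=\max_{\mu\in\text{Inv}(\Sigma_A(\mathcal{A}),\sigma)}\int f_n\,d\mu$.

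Finally I would pass to the limit. On the compact space $\text{Inv}(\Sigma_A(\mathcal{A}),\sigma)$ the maps $\mu\mapsto\int f_n\,d\mu$ are continuous and decrease (monotone convergence, $f$ bounded above) to the upper semicontinuous map $\mu\mapsto\int f\,d\mu$; a Dini-type compactness argument then yields $\lim_n\max_{\mu}\int f_n\,d\mu=\max_{\mu}\int f\,d\mu$ over this compact set. Combining with the previous step and the inclusion $\text{Inv}(\Sigma_A(\mathcal{A}),\sigma)\subseteq\text{Inv}(\Sigma_A,\sigma)$ gives the sandwich $\beta_f\le\beta_{f_n}=\max_{\mu\in\text{Inv}(\Sigma_A(\mathcal{A}),\sigma)}\int f_n\,d\mu\longrightarrow\max_{\mu\in\text{Inv}(\Sigma_A(\mathcal{A}),\sigma)}\int f\,d\mu\le\beta_f$, whence equality. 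Exponentiating yields $r(a\alpha)=\max_{\mu\in\text{Inv}(\Sigma_A(\mathcal{A}),\sigma)}\exp\int_{\Sigma_A}\ln|\widehat a|\,d\mu$, as claimed. Notably this route avoids any appeal to continuity of the map $a\mapsto r(a\alpha)$, since the supremum defining each $\beta_{f_n}$ is already attained on the finite subshift.
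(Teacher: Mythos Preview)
Your approach is correct but considerably more elaborate than the paper's. The paper dispenses with \cite{Bissacot2014} and the approximation by real-valued potentials entirely. It simply picks a finite $\mathcal{A}\supseteq\{1,\dots,M\}$ with $A|_{\mathcal{A}\times\mathcal{A}}$ irreducible (this exists by transitivity of $\Sigma_A$) and observes: since $f=\ln|\widehat a|\equiv-\infty$ on $\bigcup_{\ell>M}[\ell]$ (by the Claim already established in Proposition~\ref{Proposition: formula spectrum radius of element in span D_A trasitive case}), any $\mu\in\text{Inv}(\Sigma_A,\sigma)$ with $\mu\big(\bigcup_{i\in\mathcal{A}}[i]\big)<1$ must give positive mass to some $[\ell]$ with $\ell>M$, hence $\int f\,d\mu=-\infty$. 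Thus the supremum over $\text{Inv}(\Sigma_A,\sigma)$ equals that over $\{\mu:\mu(\bigcup_{i\in\mathcal{A}}[i])=1\}$, which by Remark~\ref{Remark: one-to one correspondance between small and large Markov shift} is the maximum over $\text{Inv}(\Sigma_A(\mathcal{A}),\sigma)$. That is the whole argument.

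What your route buys: it would adapt to potentials that are merely coercive rather than identically $-\infty$ outside finitely many first-symbol cylinders, and you correctly note it sidesteps any continuity of $a\mapsto r(a\alpha)$. What the paper's route buys: it is a few lines long, needs no approximation $f_n\searrow f$, no Dini-type limit, no delicate tuning of $C_n$ to pin down a common $\mathcal{A}$ across all $n$, and no case split on whether $\beta_f$ is finite. For the special elements $a=\sum_{u\in U}\lambda_u e_u$ with $U\subseteq J_A$, the potential really is $-\infty$ on all but finitely many cylinders $[\ell]$, so the direct argument is available and your machinery is unnecessary.
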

 
\begin{proof} 
Let $M=\min\{m\in\mathbb{N}: \supp \widehat{a}\cap\Sigma_A \subseteq \bigcup_{i=1}^m[i]\}$ be and let $\mathcal{A}\subseteq \mathbb{N}$ be a finite set such that $\{1,2,\dots, M\}\subseteq\mathcal{A}$ and $A_{\mathcal{A}\times\mathcal{A}}$ is irreducible. Taking the function $f:\Sigma_A\to\mathbb{R}\cup\{-\infty\}$ defined as $f=\ln|\widehat{a}|\vert_{\Sigma_A}$, we note that $f(\omega)=-\infty$ for every $\omega\in\bigcup_{i>M}[i]$. Then, if we consider $\mu \in \mathrm{Inv}(\Sigma_A, \sigma)$ such that $\mu\left(\bigcup_{i \in \mathcal{A}} [i]\right)\neq 1$, we obtain $\int_{\Sigma_A} f d\mu=-\infty$. Therefore, by Remark \ref{Remark: one-to one correspondance between small and large Markov shift} one gets
\begin{align*}
    r(a\alpha)&=\max_{\mu\in\text{Inv}(\Sigma_A,\sigma)}\exp \int_{\Sigma_A} \ln |\widehat{a}| \, d\mu &\\
    &=\max_{\substack{\mu\in\text{Inv}(\Sigma_A,\sigma) \\ \mu\left(\bigcup_{i \in \mathcal{A}} [i]\right)= 1}} \exp \int_{\Sigma_A} \ln |\widehat{a}| \, d\mu &\\
    &=\max_{\mu\in\text{Inv}(\Sigma_A(\mathcal{A}),\sigma)}\exp \int_{\Sigma_A} \ln |\widehat{a}| \, d\mu.
\end{align*}
\end{proof}

\begin{corollary}
Let $A$ be a column-finite transition matrix and suppose that $\sigma$ extends continuously to $X_A$ (and it is dual to the endomorphism $\Theta$, see Theorem \ref{Theorem:shift_dual_is_theta}). Given $f \in C(X_A)$, the spectral radius $r(f\Theta)$ satisfies
\begin{equation}\label{Equation: spectral radius of f in C(X)}
    r(f\Theta) = \max_{\mu \in \text{Inv}(\Sigma_A \cup E_A, \sigma)} \exp  \int_{\Sigma_A\cup E_A} \ln |f| \, d\mu.
\end{equation}
Moreover, if $\supp f\cap\Sigma_A$ can be expressed as a union of finitely many cylinders on which $f$ is constant on each cylinder, then there exists a finite subset $\mathcal{A} \subseteq\mathbb {N}$ such that the restriction of $A$ to $\mathcal{A}$ is irreducible and
\begin{equation}\label{Equation: spectral radius of f in C(X) constant in cylinders}
    r(f\Theta) = \max_{\mu \in \text{Inv}(\Sigma_A(\mathcal{A}), \sigma)} \exp \int_{\Sigma_A } \ln |f| \, d\mu.
\end{equation}
\end{corollary}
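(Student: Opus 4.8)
The plan is to derive the corollary directly from the previously established results by transporting everything through the Gelfand $*$-isomorphism $\pi: C_0(X_A) \to \mathcal{D}_A$ from Theorem \ref{thm:isomorphism_C_0_X_A_and_D_A}. First I would set $a := \pi(f) \in \mathcal{D}_A$ and observe that, since $\Theta = \pi^{-1}\circ\alpha\circ\pi$, conjugation by $\pi$ yields $f\Theta = \pi^{-1}\circ(a\alpha)\circ\pi$ as operators in $\mathfrak{B}(C(X_A))$, so that $f\Theta$ and $a\alpha$ are similar and hence $r(f\Theta) = r(a\alpha)$. Moreover, Remark \ref{rmk:Gelfand_transform_D_A} identifies $\widehat{a} = \widehat{\pi(f)}$ with $f$ itself under the identification of $X_A$ with the spectrum of $\mathcal{D}_A$ (recall $\pi^{-1}$ is the Gelfand transform). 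Applying Proposition \ref{Proposition: formula spectrum radius of element in span D_A trasitive case}$(i)$ to $a$ then gives $r(a\alpha) = \max_{\mu\in\text{Inv}(\Sigma_A\cup E_A,\sigma)}\exp\int_{\Sigma_A\cup E_A}\ln|\widehat{a}|\,d\mu$, and substituting $\widehat{a} = f$ produces exactly \eqref{Equation: spectral radius of f in C(X)}. This first half is essentially a bookkeeping argument combining the similarity of weighted endomorphisms under the conjugating isomorphism with the already-proved spectral radius formula.

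For the second assertion I would argue that the hypothesis on $\supp f \cap \Sigma_A$ places $f$ (equivalently $a = \pi(f)$) within the scope of the refined formula of Theorem \ref{Theorem formula spectrum radius of element in span D_A with finite alphabet}, or at least reduces to its proof. The key point is that if $\supp f \cap \Sigma_A$ is a finite union of cylinders on which $f$ is constant, then $a = \pi(f)$ lies in $\spann\,\mathcal{G}_A$ and admits a decomposition $a = \sum_{u\in U}\lambda_u e_u$ with no contribution from the special index set $I_A\setminus J_A$; concretely, each cylinder $[\beta]$ (with $\beta \neq e$) on which $f$ is constant corresponds to a generator $e_{\beta,\varnothing}$ with $(\beta,\varnothing)\in J_A$, so the decomposition satisfies the hypothesis $\lambda_w = 0$ for all $w\in W$ of Proposition \ref{Proposition: formula spectrum radius of element in span D_A trasitive case}$(ii)$. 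I would then invoke Theorem \ref{Theorem formula spectrum radius of element in span D_A with finite alphabet} directly, which produces a finite irreducible $\mathcal{A}\subseteq\mathbb{N}$ and the formula $r(a\alpha) = \max_{\mu\in\text{Inv}(\Sigma_A(\mathcal{A}),\sigma)}\exp\int_{\Sigma_A}\ln|\widehat{a}|\,d\mu$. Translating back via $r(f\Theta) = r(a\alpha)$ and $\widehat{a} = f$ yields \eqref{Equation: spectral radius of f in C(X) constant in cylinders}.

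The main obstacle I anticipate is verifying carefully that the cylinder-constancy hypothesis on $f$ genuinely forces $a = \pi(f)$ into the form $\sum_{u\in U}\lambda_u e_u$ with $U\subseteq J_A$, rather than into the general $\spann\,\mathcal{G}_A$ with a nonzero ``identity-type'' component. One must confirm that a finite union of cylinders $[\beta]$ in $\Sigma_A$ on which $f$ is constant, after taking closures in $X_A$ (using $\overline{[\beta]} = C_\beta$), does not implicitly carry weight on the empty-word configurations of $E_A$; this is precisely the content of the support argument in the proof of Proposition \ref{Proposition: formula spectrum radius of element in span D_A trasitive case}$(ii)$, where one shows $\widehat{a}(E_A) = \{0\}$ because any sequence $\varphi_{\omega^n}\to\varphi_{e,R}\in E_A$ has $(\omega^n)_0\to\infty$. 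I would therefore devote the careful part of the proof to matching the combinatorial description of $\supp f\cap\Sigma_A$ with the index set $J_A$ of Definition \ref{def:J_A}, ensuring the vanishing of the empty-word component and thereby justifying the reduction from the full invariant-measure set $\text{Inv}(\Sigma_A\cup E_A,\sigma)$ to $\text{Inv}(\Sigma_A(\mathcal{A}),\sigma)$.
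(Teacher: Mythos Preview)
Your proposal is correct and follows essentially the same route as the paper: transport $f$ to $a=\pi(f)$ via the Gelfand isomorphism, use the conjugation $f\Theta=\pi^{-1}\circ(a\alpha)\circ\pi$ (with $\pi$ isometric) to obtain $r(f\Theta)=r(a\alpha)$, and then invoke Proposition~\ref{Proposition: formula spectrum radius of element in span D_A trasitive case}(i) for the first formula and Theorem~\ref{Theorem formula spectrum radius of element in span D_A with finite alphabet} for the second after recognizing that the cylinder-constancy hypothesis forces $a\in\spann\,\mathcal{G}_A$ with all indices in $J_A$. The paper's proof is exactly this, written out slightly more explicitly via $\lim_n\|(\cdot)^n\|^{1/n}$ rather than appealing to similarity, and with the same reduction for the second part.
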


\begin{proof}
Let $a \in \mathcal{D}_A$ such that $\pi^{-1}(a) = \widehat{a} = f$. By Proposition \ref{Proposition: formula spectrum radius of element in span D_A trasitive case}, we obtain
\begin{align*}
    r(f\Theta) &= \lim_{n \to \infty} \|(f\Theta)^n\|_{\mathfrak{B}(C(X_A))}^{1/n} 
    = \lim_{n \to \infty} \|(f \pi^{-1} \circ \alpha \circ \pi)^n\|_{\mathfrak{B}(C(X_A))}^{1/n} \\
    &= \lim_{n \to \infty} \| \pi^{-1} \circ (a\alpha)^n \circ \pi \|_{\mathfrak{B}(C(X_A))}^{1/n} 
    = \lim_{n \to \infty} \| (a\alpha)^n \|_{\mathfrak{B}(\mathcal{D}_A)}^{1/n} \\
    &= r(a\alpha) 
    = \max_{\mu \in \text{Inv}(\Sigma_A \cup E_A, \sigma)} \exp  \int_{\Sigma_A \cup E_A} \ln |\widehat{a}| \, d\mu  \\
    &= \max_{\mu \in \text{Inv}(\Sigma_A \cup E_A, \sigma)} \exp \int_{\Sigma_A \cup E_A} \ln |f| \, d\mu .
\end{align*}
For the second statement, under the given assumptions, $a \in \operatorname{span} \mathcal{G}_A$ admits a linear combination that does not involve any terms of the form $e_{e, F}$, where $F$ has infinite common range. Consequently, by Theorem~\ref{Theorem formula spectrum radius of element in span D_A with finite alphabet}, Equation~\ref{Equation: spectral radius of f in C(X) constant in cylinders} applies.

\end{proof}


To illustrate the results developed above, we now focus on the particular case of the renewal shift.

\begin{proposition}\label{prop:non_zero_generators_of_D_A_renewal} Let $A$ be the transition matrix of the renewal shift. The non-zero elements $e_{\gamma,F} \in \mathcal{G}_A$ are precisely those who satisfy one of the following conditions:
\begin{itemize}
    \item[$(a)$] $\gamma=e$ and $F=\{1\}$;
    \item[$(b)$] $\gamma\neq e$ and, $F=\{1\}$ or $F=\varnothing$;
    \item[$(c)$] $F=\{1,n\}$ or $F=\{n\}$, $n>1$, and $A(\gamma_{|\gamma|-1},n-1)=1$ when $\gamma \neq e$.
\end{itemize}    
\end{proposition}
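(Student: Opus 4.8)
# Proof Proposal

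The plan is to characterize when $e_{\gamma,F}$ is non-zero by using the explicit action of these operators recorded in Remark \ref{Rem:How_act_operator_e_gamma,F}, together with the identity \eqref{eq:evaluating_e_on_varphi_omega_via_characteristic_functions} that gives $\widehat{e_{\gamma,F}} = \mathbbm{1}_{V_{\gamma,F}}$. Since $\Sigma_A$ is dense in $X_A$, an operator $e_{\gamma,F}$ vanishes if and only if $\widehat{e_{\gamma,F}}$ vanishes on $\Sigma_A$, i.e., there is no $\omega \in \Sigma_A$ with $\omega \in [\gamma]$ and $A(i,\omega_{|\gamma|})=1$ for all $i \in F$. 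The whole proof reduces to deciding, for each admissible pair $(\gamma,F)$, whether such an $\omega$ exists, and this is controlled entirely by the very rigid structure of the renewal matrix: $A(1,n)=A(n+1,n)=1$ and zero otherwise, so each symbol $n$ has exactly the predecessors $\{1,n+1\}$ (equivalently, $A(i,m)=1$ iff $i=1$ or $i=m+1$).

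First I would verify directly that each listed case produces a non-zero element. For $(a)$, note $e_{e,\{1\}}=Q_1$, and since every admissible word begins with a symbol $s$ satisfying $A(1,s)=1$ by irreducibility, $Q_1$ acts as the identity on every $\delta_\omega$; hence $e_{e,\{1\}}=1\neq 0$. For $(b)$, when $F=\varnothing$ we have $\widehat{e_{\gamma,\varnothing}}=\mathbbm{1}_{C_\gamma}$, non-zero because $\gamma$ is admissible so $[\gamma]\neq\varnothing$; when $F=\{1\}$ the condition $A(1,\omega_{|\gamma|})=1$ holds automatically (since $A(1,n)=1$ for all $n$), so $e_{\gamma,\{1\}}=e_{\gamma,\varnothing}\neq 0$. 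For $(c)$, the requirement $A(i,\omega_{|\gamma|})=1$ for $i\in F$ with $n\in F$, $n>1$, forces $\omega_{|\gamma|}=n-1$ (the unique symbol $m$ with $A(n,m)=1$ and $m\geq 1$ other than the trivial possibilities, using $A(n,m)=1 \iff m=n-1$ or $n=1$); the extra member $1\in F$ imposes no further constraint. Thus the relevant configuration exists precisely when $\gamma(n-1)$ is admissible, which for $\gamma\neq e$ is exactly the stated condition $A(\gamma_{|\gamma|-1},n-1)=1$, and for $\gamma=e$ reduces to $n-1$ being a symbol, always true.

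Next I would show these conditions are exhaustive, i.e., any admissible pair $(\gamma,F)$ not falling into $(a)$–$(c)$ yields $e_{\gamma,F}=0$. The two failure modes are: (i) $\gamma\neq e$, $n\in F$ with $n>1$, but $A(\gamma_{|\gamma|-1},n-1)=0$, so that $\gamma(n-1)$ is inadmissible and no $\omega\in[\gamma]$ can satisfy $\omega_{|\gamma|}=n-1$; hence $\widehat{e_{\gamma,F}}\equiv 0$ on $\Sigma_A$ and $e_{\gamma,F}=0$ by density. (ii) $F$ contains two distinct symbols $j,k$ both exceeding $1$; then any valid $\omega$ would need $\omega_{|\gamma|}=j-1$ and $\omega_{|\gamma|}=k-1$ simultaneously, impossible since $j\neq k$. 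Equivalently $Q_jQ_k=0$ because $r_A(j)\cap r_A(k)=\{j-1\}\cap\{k-1\}=\varnothing$, forcing $e_{\gamma,F}=0$ by Definition \ref{def:generators_D_A_indexed_by_pairs}.

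I expect the main obstacle to be bookkeeping rather than conceptual difficulty: one must carefully enumerate the possible structures of $F$ (whether or not $1\in F$, and how many non-trivial elements it contains) and confirm that the listed cases $(a)$–$(c)$ cover every surviving possibility while every excluded configuration collapses. The cleanest route is to observe that for the renewal shift the constraint imposed by $n\in F$ is always $\omega_{|\gamma|}=n-1$, so at most one symbol $>1$ can appear in $F$ without forcing a contradiction, and the symbol $1$ is always "free." Care is needed in the edge case $\gamma=e$, where $\omega_{|\gamma|}=\omega_0$ and the admissibility condition is automatically satisfiable, explaining why case $(a)$ singles out only $F=\{1\}$ while case $(c)$ with $\gamma=e$ absorbs the remaining non-trivial $F$.
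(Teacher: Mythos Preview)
Your proposal is correct and follows essentially the same approach as the paper's proof: both verify that each of the cases $(a)$--$(c)$ yields a non-zero projection via the Gelfand transform identity $\widehat{e_{\gamma,F}}=\mathbbm{1}_{V_{\gamma,F}}$ and the explicit action on $\Sigma_A$, and then handle the converse through the same two failure modes (the inadmissibility of $\gamma(n-1)$ when $A(\gamma_{|\gamma|-1},n-1)=0$, and the orthogonality $Q_jQ_k=0$ when $F$ contains two distinct symbols greater than $1$). Your write-up is a bit more explicit about the role of density of $\Sigma_A$ in $X_A$ and the reduction of the constraint ``$A(n,\omega_{|\gamma|})=1$'' to ``$\omega_{|\gamma|}=n-1$'', but the logical skeleton is identical to the paper's.
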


\begin{proof} Let $e_{\gamma, F} \in \mathcal{G}_A$ (and hence $(\gamma, F) \in I_A$). We analyze each case of the statement as follows:
\begin{itemize}
    \item[$(a)$] it is straightforward that $e_{e,\{1\}} = Q_1 = 1$;
    \item[$(b)$] by \eqref{eq:evaluating_e_on_varphi_omega_via_characteristic_functions}, we have $\widehat{e_{\gamma,F}}(\varphi_\omega) = \mathbbm{1}_{V_{\gamma,1}}(\omega) = \mathbbm{1}_{C_\gamma} = \mathbbm{1}_{V_{\gamma,\varnothing}}(\omega)$, and hence $e_{\gamma,F}$ is non-zero since $\gamma$ is admissible; 

    \item[$(c)$] since $A$ is the transition matrix of the renewal shift, the statement for $\gamma = e$ is immediate because $Q_1 Q_n = Q_n \neq 0$. For the same reason, when $\gamma \neq e$, we necessarily have $\gamma_{|\gamma|-1}=n-2$ or $\gamma_{|\gamma|-1}=1$, where the first possibility only happens when $n>2$. In both cases, we have again by \eqref{eq:evaluating_e_on_varphi_omega_via_characteristic_functions} that $\widehat{e_{\gamma,F}}(\varphi_\omega) = \mathbbm{1}_{V_{\gamma,F}}(\omega) = \mathbbm{1}_{V_{\gamma,\{n\}}}(\omega) = \mathbbm{1}_{V_{\gamma(n-1),\varnothing}}(\omega)$. Then, $e_{\gamma,F} \neq 0$ because $\gamma(n-1)$ is an admissible word.
\end{itemize}   
Conversely, let $(\gamma,F) \in I_A$ such that does not satisfy any of the conditions in the cases above. We have only two possibilities:
\begin{itemize}
    \item[$\mathbf{i.}$] $\gamma \neq e$ and $n >1$, such that $(\gamma, F)$ satisfy the conditions of $(c)$, except that $A(\gamma_{|\gamma|-1},n-1)=0$. Then by the formulae in $(c)$, we have that $\gamma (n-1)$ is not admissible and hence $e_{\gamma,F} = 0$.

    \item[$\mathbf{ii.}$] $F$ contains two distinct elements $j, k\in\mathbb{N}\setminus\{1\}$. Then, there is no $n \in \mathbb{N}$ satisfying $A(j,m) = A(k,m) = 1$, and hence $Q_j Q_k = 0$, so by definition we have $e_{\gamma,F} = 0$.
\end{itemize}
\end{proof}

\begin{remark}\label{rmk:linearly_dependency_on_G_A} From the proof of Proposition \ref{prop:non_zero_generators_of_D_A_renewal} $(b)$ and $(c)$, we conclude that
\begin{itemize}
    \item[$(b')$] $e_{\gamma, \{1\}} = e_{\gamma, \varnothing}$ for every $\gamma \neq e$ admissible word;
    \item[$(c')$] $e_{\gamma, \{1,n\}} = e_{\gamma, \{n\}} = e_{\gamma(n-1), \varnothing}$, $n>1$ and $A(\gamma_{|\gamma|-1},n-1)=1$.
\end{itemize}
For general matrices, the elements of $\mathcal{G}_A$ are linearly dependent. In particular, for the renewal shift we have $e_{k,\varnothing} = e_{k (k-1)\dots 1, \varnothing}$ for every $k \in \mathbb{N}$.
\end{remark}

\begin{remark}
   Proposition \ref{prop:non_zero_generators_of_D_A_renewal} gives that the only non-null elements of the form $e_{e, F}$ are those of $F=\{1,n\}$, with $n\geq 1$. Note that, for any $\omega\in\Sigma_A$,
    \begin{equation*}
        \sum_{i\in\mathbb{N}}e_{i,\{1,n\}}\delta_\omega=e_{\omega_0,\{1,n\}}\delta_\omega.
    \end{equation*}
    Hence,
    \begin{align*}
        \alpha_0(e_{e,\{1,n\}})\delta_\omega&= \sum_{i\in\mathbb{N}}A(i,\omega_1)e_{i,\{1,n\}}\delta_\omega =e_{1,\{1,n\}}\delta_\omega+e_{\omega_1+1,\{1,n\}}\delta_\omega=e_{\omega_0,\{1,n\}}\delta_\omega,
    \end{align*}
    and therefore $\alpha_0(e_{e,\{1,n\}})=\lim_{m\to\infty}\sum_{i=1}^m e_{i,\{1,n\}}\in\mathcal{D}_A$. By Corollary \ref{cor:A_column_finite_sufficient_to_check_alpha_0_on_empty_words} and Theorem \ref{Theorem: alpha well-defined iff sigma extended}, $\sigma$ admits a continuous extension to $X_A$. This is an alternative proof for the continuous extension of the shift operator in the Renewal shift.
\end{remark}

Before we present the explicit formula for the spectral radius of $a\alpha$ for the renewal shift, where $\sigma$ is dual to $\alpha$, we must observe that in this particular case the set $\{1\}$ is the unique subset of $\mathbb{N}$ that has infinite common range, and then the way to decompose $a$ as a linear combination as in the statement of Proposition \ref{Proposition: formula spectrum radius of element in span D_A trasitive case} is simpler, namely
\begin{equation*}
    a=\lambda_0e_{e,\{1\}}+\sum_{k=1}^{N_a}\lambda_ke_{\gamma^k,F_k}.
\end{equation*}
In addition, since we assume that the decomposition has not trivial terms, by Proposition \ref{prop:non_zero_generators_of_D_A_renewal} and Remark \ref{rmk:linearly_dependency_on_G_A}, we can simplify the identity above even more as next:
\begin{equation*}
    a=\lambda_0e_{e,\{1\}}+\sum_{k=1}^{N_a}\lambda_ke_{\gamma^k(n_k-1),\varnothing},
\end{equation*}
where either $\gamma^k = e$ and $n_k > 1$, or $\gamma^k \neq e$ and $n_k \geq 1$, with the convention $\gamma^k(n_k-1)=\gamma^k$ when $n_k =1$. This simplification is used in our final result next.

\begin{theorem}\label{Theorem formula spectrum radius of element in span D_A}
    Given $A$ being renewal shift matrix, and $a \in \operatorname{span} \mathcal{G}_A $, write $a=\lambda_0e_{e,\{1\}}+\sum_{k=1}^{N_a}\lambda_ke_{\gamma^k,F_k}$, where each $e_{\gamma^k,F_k}$ is non-zero. Then, the spectral radius of the weighted endomorphism $a\alpha$ is given by
    \begin{equation}\label{ecuacion de R.E. de aalpha para a en span D_A y lambda no cero}
        r(a\alpha)=\max_{\mu\in \text{Inv}(\Sigma_A\cup E_A,\sigma)}\prod_{x\in\Omega_a} \Lambda_x^{\mu(Z_x)},
    \end{equation}
    where $\Omega_a:=\{0,1\}^{\{1,2,\cdots,N_a\}}$ and for each $x\in \Omega_a$,
    \begin{equation*}
        \Lambda_x:=\left|\lambda_0+\sum_{k=1}^{N_a}\widetilde{\lambda_k}\right| ,\,\,\text{where }\,\,\widetilde{\lambda_k}:=\begin{cases}
            \lambda_k, & \text{if } x_k=1, \\
            0, & \text{if } x_k=0;
        \end{cases}
    \end{equation*}
    \begin{equation*}
        Z_x:=\bigcap_{k=1}^{N_a}[\gamma^k(n_k-1)]^{\widetilde{}},\,\,\text{where }\,\,[\gamma^k(n_k-1)]^{\widetilde{}}:=\begin{cases}
            [\gamma^k(n_k-1)], & \text{if } x_k=1, \\
            [\gamma^k(n_k-1)]^c, & \text{if } x_k=0.
        \end{cases}
    \end{equation*}
    The numbers $n_k \in \mathbb{N}$ above are such that $F_k = \{n_k\}$ (see Proposition \ref{prop:non_zero_generators_of_D_A_renewal} and Remark \ref{rmk:linearly_dependency_on_G_A}).
    We set that $\gamma^k(n_k-1)=\gamma^k$ when $F_k=\{1\}$ or $F_k=\varnothing$, and $0^0=1$.
    Moreover, if $\lambda_0=0$, then
   \begin{equation}\label{ecuacion de Ra. esp. de aalpha para a en span D_A y lambda cero}
        r(a\alpha)=\max_{\mu\in\text{Inv}(\Sigma_A(\mathcal{A}),\sigma)}\prod_{x\in\Omega_a} \Lambda_x^{\mu(Z_x)},
    \end{equation}
    where $\mathcal{A}\subseteq\mathbb{N}$ finite $A \vert_{\mathcal{A} \times \mathcal{A}}$ is irreducible.
\end{theorem}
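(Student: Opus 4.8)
The plan is to reduce the statement to a direct application of the spectral radius formula already established in Proposition \ref{Proposition: formula spectrum radius of element in span D_A trasitive case}, with the bulk of the work being an explicit evaluation of the integral $\int \ln|\widehat{a}|\,d\mu$ by partitioning the space according to the value of $\widehat{a}$. First I would record, using the simplification noted just before the statement, that $\widehat{a} = \lambda_0 + \sum_{k=1}^{N_a}\lambda_k \mathbbm{1}_{C_{\gamma^k(n_k-1)}}$, so that $\widehat{a}$ is constant on each atom of the finite partition generated by the sets $C_{\gamma^k(n_k-1)}$. Indexing these atoms by $x \in \Omega_a = \{0,1\}^{\{1,\dots,N_a\}}$, where $x_k$ records whether a point lies inside $C_{\gamma^k(n_k-1)}$ or its complement, the value of $\widehat{a}$ on the atom corresponding to $x$ is exactly $\Lambda_x = |\lambda_0 + \sum_k \widetilde{\lambda_k}|$, and the atom restricted to $\Sigma_A$ is $Z_x = \bigcap_k [\gamma^k(n_k-1)]^{\widetilde{}}$. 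Their closures in $X_A$ give the corresponding atoms in $\Sigma_A \cup E_A$ (with the empty word falling into the atom $Z_{\mathbf 0}$).

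Next I would carry out the integral computation. For any invariant measure $\mu$ on $\Sigma_A \cup E_A$, since the atoms indexed by $\Omega_a$ form a finite measurable partition of full measure, one gets
\begin{align*}
    \exp\int \ln|\widehat{a}|\,d\mu = \exp\sum_{x\in\Omega_a}\int_{Z_x}\ln\Lambda_x\,d\mu = \exp\sum_{x\in\Omega_a}\mu(Z_x)\ln\Lambda_x = \prod_{x\in\Omega_a}\Lambda_x^{\mu(Z_x)},
\end{align*}
with the convention $\ln 0 = -\infty$, $\exp(-\infty)=0$, and $0^0 = 1$ handling the atoms on which $\Lambda_x = 0$. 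Substituting this into the formula \eqref{ecuacion de R.E. de aalpha para a en span D_A y lambda no cero transitive case} of Proposition \ref{Proposition: formula spectrum radius of element in span D_A trasitive case}(i) yields \eqref{ecuacion de R.E. de aalpha para a en span D_A y lambda no cero} directly. The only subtlety here is keeping careful track of the measure-zero overlaps between the closures $Z_x$ and their boundaries; since each $C_{\gamma^k(n_k-1)}$ is clopen in $X_A$ (being a generalized cylinder), the atoms are genuinely disjoint clopen sets and no boundary issue arises, so $\mu(Z_x)$ is unambiguous whether computed in $\Sigma_A$ or in $\Sigma_A \cup E_A$.

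For the case $\lambda_0 = 0$, I would invoke part (ii) of Proposition \ref{Proposition: formula spectrum radius of element in span D_A trasitive case}: when the identity term is absent the decomposition lies in $\spann\{e_u : u \in J_A\}$ (here the only subset of $\mathbb{N}$ with infinite common range is $\{1\}$, realized by $e_{e,\{1\}}$, which is precisely the identity), so the spectral radius is computed by maximizing over $\text{Inv}(\Sigma_A,\sigma)$ rather than $\text{Inv}(\Sigma_A\cup E_A,\sigma)$. Running the same atom-wise integral evaluation over $\Sigma_A$ gives $\prod_{x}\Lambda_x^{\mu(Z_x)}$ against measures on $\Sigma_A$, and then Theorem \ref{Theorem formula spectrum radius of element in span D_A with finite alphabet} upgrades this to a maximum over $\text{Inv}(\Sigma_A(\mathcal{A}),\sigma)$ for a suitable finite irreducible subalphabet $\mathcal{A}$, producing \eqref{ecuacion de Ra. esp. de aalpha para a en span D_A y lambda cero}. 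The main obstacle I anticipate is not analytic but bookkeeping: verifying that the atom $Z_{\mathbf 0}$ containing $E_A$ is exactly the one on which $\widehat{a}$ vanishes when $\lambda_0 = 0$ (so that dropping $E_A$ costs nothing), and confirming that the finite alphabet $\mathcal{A}$ supplied by Theorem \ref{Theorem formula spectrum radius of element in span D_A with finite alphabet} contains the support index $M = \min\{m : \supp\widehat{a}\cap\Sigma_A \subseteq \bigcup_{i=1}^m [i]\}$ so that the $Z_x$ descend correctly to $\Sigma_A(\mathcal{A})$.
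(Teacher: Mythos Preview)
Your proposal is correct and follows essentially the same route as the paper's proof: both write $\widehat{a} = \lambda_0 + \sum_k \lambda_k \mathbbm{1}_{C_{\gamma^k(n_k-1)}}$, observe that $|\widehat{a}|$ is constant equal to $\Lambda_x$ on each atom $Z_x$ of the partition indexed by $\Omega_a$, compute the integral atom-wise, and then feed this into Proposition~\ref{Proposition: formula spectrum radius of element in span D_A trasitive case} for the first formula and into Theorem~\ref{Theorem formula spectrum radius of element in span D_A with finite alphabet} (using that $\{1\}$ is the only finite set with infinite common range for the renewal matrix) for the case $\lambda_0 = 0$. Your extra remarks about clopen atoms and where $E_A$ sits are precisely the bookkeeping the paper also handles, just phrased slightly differently.
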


\begin{proof} First, observe that the sets $Z_x$ are pairwise disjoint, moreover $X_A = \bigsqcup_{x \in \Omega_a} Z_x$, and since every invariant measure on $X_A$ has mass only in $\Sigma_A \cup E_A$, we have
\begin{equation*}
    \int_{\Sigma_A\cup E_A} \ln |\widehat{a}| d\mu= \sum_{x\in\Omega_a} \int_{Z_x} \ln |\widehat{a}| d\mu.
\end{equation*}
The idea of the proof is to evaluate the integral of the right-handed side above for each $Z_x$. By the simplifications in the previous page, and the Gelfand transform $\pi^{-1}$ (see Theorem \ref{thm:isomorphism_C_0_X_A_and_D_A} and Remark \ref{rmk:Gelfand_transform_D_A}), we obtain
    \begin{equation}
        \hat{a} = \lambda_0 + \sum_{k=1}^{N_a} \lambda_k \mathbbm{1}_{C_{\gamma^k(n_k-1)}},
    \end{equation}
    where we always have $\gamma^k(n_k-1) \neq e$. Now, if $x\in\Omega_a\setminus\{\bm{0}\}$, where $\bm{0} := (0,0, \dots, 0)$, by definition we have $ Z_x \subseteq [\gamma^k(n_k-1)] \subseteq \Sigma_A$ for every $k$ such that $x_k=1$. In this case, if $\omega \in Z_x$, one gets
\begin{align*}
    |\widehat{a}(\varphi_\omega)|=\left| \lambda_0 + \sum_{k=1}^{N_a} \lambda_k \mathbbm{1}_{C_{\gamma^k(n_k-1)}}(\omega) \right|=\left|\lambda_0+\sum_{k=1}^{N_a}\widetilde{\lambda_k}\right|=\Lambda_x.
\end{align*}
On the other hand, if $x=\bm{0}$, then for every $\mu \in \text{Inv}(X_A,\sigma)$ we have that $Z_{\bm{0}} = \bigcap_{k=1}^{N_a}C_{\gamma^k(n_k-1)}^c$ $\mu$-a.e., and then $|\widehat{a}(\xi)|=|\lambda_0|=\Lambda_{\bm{0}}$ $\mu$-a.e. in $Z_{\bm{0}}$, and therefore
\begin{align*}
   \exp \int_{\Sigma_A\cup E_A} \ln |\widehat{a}| d\mu=\exp \sum_{x\in\Omega_a} \int_{Z_x} \ln |\widehat{a}| d\mu=\exp \sum_{x\in\Omega_a} \mu(Z_x) \ln \Lambda_x= \prod_{x\in\Omega_a} \Lambda_x^{\mu(Z_x)}.
\end{align*}
Now, since in this case $\{1\}$ is the only set of emitters with infinite common range, by Theorem \ref{Theorem formula spectrum radius of element in span D_A with finite alphabet} and if $\lambda_0=0$, the second assertion is straightforward.

\end{proof}

\begin{remark}
    In Remark \ref{rmk:linearly_dependency_on_G_A}, we highlighted that the elements of $\mathcal{G}_A$ are not linearly independent, and one could use different linear combinations of elements from $\mathcal{G}_A$ to write the same element $a \in \spann \, \mathcal{G}_A$. That is,
    \begin{equation*}
        a = \lambda_0e_{e,\{1\}}+\sum_{k=1}^{N_a}\lambda_ke_{\gamma^k,F_k} = a=\lambda_0'e_{e,\{1\}}+\sum_{k=1}^{N_a'}\lambda_k'e_{\gamma'^k,F'_k}.   
    \end{equation*}
    However, Theorems \ref{Theorem formula spectrum radius of element in span D_A} and \ref{Theorem formula spectrum radius of element in span D_A with finite alphabet} are unaffected by choice of decomposition of $a$ by projections in $\mathcal{G}_A$. For instance, Theorem \ref{Teorema Formula radio espectral con algebra general A} gives
    \begin{equation*}
        \max_{\mu\in\text{Inv}(\Sigma_A(\mathcal{A}),\sigma)}\prod_{x\in\Omega_a} \Lambda_x^{\mu(Z_x)} = \max_{\mu \in \text{Inv}(\Delta_\infty, \varphi)} \exp  \int_{\Delta_\infty} \ln |\hat{a}(x)| \, d\mu = \max_{\mu\in\text{Inv}(\Sigma_A(\mathcal{A}),\sigma)}\prod_{x\in\Omega_a'} \Lambda_x^{\mu(Z'_x)}.
    \end{equation*}
\end{remark}

\section{Concluding remarks}

In this work, we provided an explicit formula for the endomorphism $\alpha$ on the commutative C$^*$-subalgebra $\mathcal{D}_A$ of the Exel-Laca algebra $\mathcal{O}_A$ whose spectrum is the generalized Markov shift $X_A$, where the shift map on $X_A$ is dual to the endomorphism $\alpha$, extending results by B. K. Kwa\'sniewski and A. Lebedev \cite{KwaLeb2020} from the standard compact Markov shifts $\Sigma_A$ (finite alphabet) to the generalized Markov shifts $X_A$ (infinite countable alphabet). Using the faithful representation of $\mathcal{O}_A$ on $\mathfrak{B}(\ell^2(\Sigma_A))$ constructed by R. Exel and M. Laca in \cite{EL1999}, we defined a function $\alpha_0$ from the generators of $\mathcal{D}_A$ to $\mathfrak{B}(\ell^2(\Sigma_A))$, where we studied conditions when $\alpha_0$ can be extended as the endomorphism $\alpha$ dual to the shift map on $X_A$. We discovered that such an extension is possible precisely when the shift map admits a continuous extension on the whole space $X_A$, which is equivalent to stating that the image $\alpha_0$ is contained in $\mathcal{D}_A$. In particular, when the matrix is column-finite, we only need to verify that the image generators indexed by the empty word $e$ (see Definition \ref{def:generators_D_A_indexed_by_pairs}) are in $\mathcal{D}_A$. We presented several examples of matrices whose shift map cannot be continuously extended to $X_A$ and two families of generalized Markov shifts that we can, namely the single empty-word class and the periodic renewal class. Both classes consist of, among other requirements, generalized Markov shifts whose transition matrices are column-finite, a sufficient condition to ensure that the set of empty-word elements of $X_A$, named $E_A$, is invariant in the sense that $\sigma(E_A) \subseteq E_A$. However, in both cases, we have $\sigma(E_A) = E_A$, where in the first case, the dynamics on $E_A$ is a fixed point, and in the second, it is a cycle. These classes are neither disjoint nor proper of each other: the renewal shift is in both classes, the lazy renewal shift (see \cite{MichiNakaHisaYoshi2022}) is in the first one only, and the pair renewal shift is in the latter one only. 

As a consequence, we obtained concrete examples in the context of generalized Markov shifts that the shift map $\sigma$ admits the dual endomorphism $\alpha$. Thus, we investigated the theory of weighted endomorphisms  $a\alpha$, $a \in \mathcal{D}_A$, and their spectral radii. The compactness of our classes of shifts allows us to apply results of Kwa\'{s}niewski and A.V. Lebedev, see Theorem 5.1 of \cite{KwaLeb2020}, even in the case of the infinite alphabet, whose the version for the space $X_A$ is contained in Proposition \ref{Proposition: formula spectrum radius of element in span D_A trasitive case}. In this same Proposition, we also show that the spectral radius formula for a special class of elements in the dense $*$-subalgebra $\spann \,\mathcal{G}_A$ of projections of $\mathcal{D}_A$ that depends only on invariant measures on $\Sigma_A$. 
Furthermore, Theorem \ref{Theorem formula spectrum radius of element in span D_A with finite alphabet} shows that for a fixed special element, we may transform the infinite alphabet question about calculating the spectral radius of $a\alpha$ into a finite alphabet version of the same problem. More precisely, the maximum value attained among the invariant measures in the right-handed side of \eqref{ecuacion de R.E. de aalpha para a en span D_A y lambda no cero transitive case} to calculate $r(a\alpha)$ can actually be taken for invariant measures on a transitive subshift on finite alphabet (that depends on $a$). And finally, \ref{Theorem formula spectrum radius of element in span D_A} illustrates how the results generalize the finite alphabet case (see Section 6 of \cite{KwaLeb2020}).

Some new natural questions arise from this paper. We point out that, although Theorem \ref{Theorem: alpha well-defined iff sigma extended} characterizes when the shift map $\sigma$ admits a continuous extension, and we have an endomorphism $\alpha$ on $\mathcal{D}_A$, where $\sigma$ is dual to $\alpha$, the elements $\alpha_0(g)$, $g \in \mathcal{G}_A$, are always bounded operators on $\ell^2(\Sigma_A)$. Since the image of C$^*$-algebras by *-homomorphisms are C$^*$-algebras, maybe it is possible to explore the connections between the C$^*$-algebra generated by $\alpha_0(\mathcal{G}_A)$ with the generalized Markov shifts for the cases where the shift map does not admit a continuous extension.

Another possible direction is to investigate more general classes of generalized Markov shifts, where we can extend the shift map continuously. We mention that our classes of examples consist of generalized Markov shifts with finitely many empty-word configurations, so it is open to questions concerning examples for infinite $E_A$. Moreover, it would be of great interest if we could obtain explicit formulae for $r(a\alpha)$  for general irreducible matrices.

\section{Acknowledgements}

TR and IDG were supported by the Coordenação de Aperfeiçoamento de Pessoal de Nível Superior – Brasil (CAPES) – Finance Code 001. RB and IDG were supported by CNPq.

\bibliographystyle{plain}
\bibliography{bibliografia}

\begin{thebibliography}{10}

\bibitem{AntLeb1994}
A.~Antonevich and A.~Lebedev.
\newblock {\em Functional Differential Equations I. C*-Theory}.
\newblock Longman Scientific and Technical, 1994.

\bibitem{BarKwa2021}
K.~Bardadyn and B.~K. Kwaśniewski.
\newblock Spectrum of weighted isometries: C*-algebras, transfer operators and topological pressure.
\newblock {\em Isr. J. Math.}, 246:149--210, 2021.

\bibitem{BEFR2018}
R.~Bissacot, R.~Exel, R.~Frausino, and T.~Raszeja.
\newblock Thermodynamic formalism for generalized {M}arkov shifts on infinitely many states. {A}r{X}iv:1808.00765, 2022.

\bibitem{BEFR2022}
R.~Bissacot, R.~Exel, R.~Frausino, and T.~Raszeja.
\newblock Quasi-invariant measures for generalized approximately proper equivalence relations.
\newblock {\em J. Math. Anal. Appl.}, 538(2):128444, 2024.

\bibitem{BoaCasGonWyk2023}
G.~Boava, G.~G. de~Castro, D.~Gon\c{c}alves, and D.~W. van Wyk.
\newblock C*-algebras of one-sided subshifts over arbitrary alphabets.
\newblock {\em Rev. Mat. Iberoam.}, 40(3):1045–1088, 2024.

\bibitem{BuzSa2003}
J.~Buzzi and O.~Sarig.
\newblock Uniqueness of equilibrium measures for countable {M}arkov shifts and multidimensional piecewise expanding maps.
\newblock {\em Ergod. Theory Dyn. Syst.}, 23(5):1383--1400, 2003.

\bibitem{ChiLat1999}
C.~Chicone and Y.~Latushkin.
\newblock {\em Evolution Semigroups in Dynamical Systems and Differential Equations}, volume~70 of {\em Mathematical Surveys and Monographs}.
\newblock American Mathematical Society, 1999.

\bibitem{Cuntz1977}
J.~Cuntz.
\newblock Simple {C}*-algebra generated by isometries.
\newblock {\em Commun. {M}ath. {P}hys.}, 57(2):173--185, 1977.

\bibitem{CK1980}
J.~Cuntz and Wolfgang K.
\newblock A class of {C}*-algebras and topological {{M}arkov} chains.
\newblock {\em Invent. {M}ath.}, 56(3):251--268, 1980.

\bibitem{D2018}
M.~Daws.
\newblock Extending maps from dense $*$-algebras of {C}$^*$-algebras.
\newblock MathOverflow.
\newblock URL:https://mathoverflow.net/q/303264 (version: 2018-06-20).

\bibitem{DenUr1991}
M.~Denker and M.~Urba\'nski.
\newblock On the existence of conformal measures.
\newblock {\em Trans. Amer. Math. Soc.}, 328(2):563--587, 1991.

\bibitem{Dixmier1963}
J.~Dixmier.
\newblock Traces sur les {C}*-algèbres.
\newblock {\em Ann. Inst. Fourier}, 13:219--262, 1963.

\bibitem{DongGaoZhou2022}
X.~Dong, Y.~Gao, and Z.~Zhou.
\newblock Essential spectra of weighted composition operators induced by elliptic automorphisms.
\newblock {\em Math. Z.}, 300:2333–2348, 2022.

\bibitem{DiazGranados2024}
I.~Díaz-Granados.
\newblock {Spectral Radius of Weighted Endomorphisms with Applications on Generalized Countable {M}arkov Shifts}.
\newblock Master's thesis, Instituto de Matemática e Estatística, University of São Paulo, São Paulo, 2024.

\bibitem{ExelGioGon2011}
R.~Exel, T.~Giordano, and D.~Gon{\c{c}}alves.
\newblock Enveloping algebras of partial actions as groupoid {C}*-algebras.
\newblock {\em J. Oper. Theory}, 65(1):197--210, 2011.

\bibitem{EL1999}
R.~Exel and M.~Laca.
\newblock Cuntz-{K}rieger algebras for infinite matrices.
\newblock {\em J. Reine Angew. Math.}, 512:119--172, 1999.

\bibitem{EL2000}
R.~Exel and M.~Laca.
\newblock The k-theory of {C}untz–{K}rieger algebras for infinite matrices.
\newblock {\em K-Theory}, 19(3):251–268, 2000.

\bibitem{EL2003}
R.~Exel and M.~Laca.
\newblock Partial dynamical systems and the {KMS} condition.
\newblock {\em Commun. Math. Phys.}, 232:223--277, 2003.

\bibitem{FieFie95}
D.~Fiebig and U.-R. Fiebig.
\newblock Topological boundaries for countable state {M}arkov shifts.
\newblock {\em Proc. Lond. Math. Soc.}, s3-70(3):625--643, 1995.

\bibitem{FieFieYu2002}
D.~Fiebig, U.-R. Fiebig, and M.~Yuri.
\newblock Pressure and equilibrium states for countable state {M}arkov shifts.
\newblock {\em Isr. J. Math.}, 131(1):221–257, 2002.

\bibitem{FreireVargas2018}
R.~Freire and V.~Vargas.
\newblock Equilibrium states and zero temperature limit on topologically transitive countable {M}arkov shifts.
\newblock {\em Trans. Amer. Math. Soc.}, 370(12):8451–8465, 2018.

\bibitem{GaoZhou2020}
Y.~X. Gao and Z.~Z. Zhou.
\newblock Spectra of weighted composition operators induced by automophisms.
\newblock {\em J. Oper. Theory}, 83:3--26, 2020.

\bibitem{Gurevich1969topological}
B.~M. Gurevich.
\newblock Topological entropy of a countable {M}arkov chain.
\newblock {\em Dokl. Akad. Nauk SSSR}, 187:715, 1969.

\bibitem{Gurevich1970shift}
B.~M. Gurevich.
\newblock Shift entropy and {M}arkov measures in the space of paths of a countable graph.
\newblock {\em Dokl. Akad. Nauk SSSR}, 192(5):963--965, 1970.

\bibitem{Gurevich1984variational}
B.~M. Gurevich.
\newblock A variational characterization of one-dimensional countable state {G}ibbs random fields.
\newblock {\em Z. Wahrscheinlichkeitstheor. Verw. Geb.}, 68(2):205--242, 1984.

\bibitem{GurSav1998thermodynamical}
B.~M. Gurevich and S.~Savchenko.
\newblock Thermodynamical formalism for symbolic {M}arkov chains with a countable number of states.
\newblock {\em Uspekhi Mat. Nauk}, 53:3--106, 1998.

\bibitem{HooLamQui1982}
T.~Hoover, A.~Lambert, and J.~Quinn.
\newblock The {M}arkov process determined by a weighted composition operator.
\newblock {\em Studia Math.}, 72:225--235, 1982.

\bibitem{Iommi2006}
G.~Iommi.
\newblock Ergodic optimization for renewal type shifts.
\newblock {\em Monatsh. Math.}, 150(2):91–95, 2006.

\bibitem{Iommi2021}
G.~Iommi and A.~Velozo.
\newblock The space of invariant measures for countable {M}arkov shifts.
\newblock {\em J. Anal. Math.}, 143(2):461–501, 2021.

\bibitem{Kamowitz1978}
H.~Kamowitz.
\newblock The spectra of a class of operators on the disc algebra.
\newblock {\em Indiana Univ. Math. J.}, 27(4):581--610, 1978.

\bibitem{Kamowitz1979}
H.~Kamowitz.
\newblock Compact operators of the form {$\mu C_{\varphi}$}.
\newblock {\em Pac. J. Math.}, 80(1):205–211, 1979.

\bibitem{Kamowitz1981}
H.~Kamowitz.
\newblock Compact weighted endomorphisms of {$C(X)$}.
\newblock {\em Proc. Am. Math. Soc.}, 83:517--521, 1981.

\bibitem{KessStadStrat2007}
M.~Kesseb\"{o}hmer, M.~Stadlbauer, and B.~O. Stratmann.
\newblock Lyapunov spectra for {KMS} states on {C}untz-{K}rieger algebras.
\newblock {\em Math. Z.}, 256(4):871--893, 2007.

\bibitem{Kit1979}
A.~K. Kitover.
\newblock Spectrum of automorphisms with weight and the {K}amowitz–{S}cheinberg theorem.
\newblock {\em Funct. Anal. Appl.}, 13(1):57–58, 1979.

\bibitem{KumjPaskRaeb1998}
A.~Kumjian, D.~Pask, and I.~Raeburn.
\newblock Cuntz-{K}rieger algebras of directed graphs.
\newblock {\em Pac. J. Math.}, 184(1):161--174, 1998.

\bibitem{KumjPaskRaebRen1997}
A.~Kumjian, D.~Pask, I.~Raeburn, and J.~Renault.
\newblock Graphs, groupoids, and {C}untz{\textendash}{K}rieger algebras.
\newblock {\em J. Funct. Anal.}, 144(2):505--541, 1997.

\bibitem{KwaLeb2020}
B.~K. Kwa\'sniewski and A.V. Lebedev.
\newblock Variational principles for spectral radius of weighted endomorphisms of $c(x,d)$.
\newblock {\em Trans. Amer. Math. Soc}, 373:2659--2698, 2020.

\bibitem{LatSte1991ET}
Yu.~D. Latushkin and A.~M. Stepin.
\newblock Weighted shift operator, spectral theory of linear extensions, and the multiplicative ergodic theorem.
\newblock {\em Mathematics of the USSR-Sbornik}, 70(1):143–163, 1991.

\bibitem{LatSte1991DS}
Yu.~D. Latushkin and A.~M. Stepin.
\newblock Weighted translation operators and linear extensions of dynamical systems.
\newblock {\em Uspekhi Mat. Nauk}, 46(2):95–165, 1991.

\bibitem{MaulUr2001}
R.~D. Mauldin and M.~Urba{\'{n}}ski.
\newblock Gibbs states on the symbolic space over an infinite alphabet.
\newblock {\em Isr. J. Math.}, 125(1):93--130, 2001.

\bibitem{MaulUr2003graph}
R.~D. Mauldin and M.~Urbanski.
\newblock {\em Graph directed {M}arkov systems: geometry and dynamics of limit sets}, volume 148.
\newblock Cambridge University Press, 2003.

\bibitem{MichiNakaHisaYoshi2022}
Y.~Michimoto, Y.~Nakano, H.~Toyokawa, and K.~Yoshida.
\newblock Topological entropy for countable {M}arkov shifts and {E}xel-{L}aca algebras. {A}r{X}iv:2212.14607, 2022.

\bibitem{OttTomfordeWillis2014}
W.~Ott, M.~Tomforde, and P.~N. Willis.
\newblock One-sided shift spaces over infinite alphabets.
\newblock {\em N. Y. J. Math.}, 5, 2014.

\bibitem{Pesin2014work}
Y.~Pesin.
\newblock On the work of {S}arig on countable {M}arkov chains and thermodynamic formalism.
\newblock {\em J. Mod. Dyn.}, 8(1):1--14, 2014.

\bibitem{Raszeja2020}
T.~Raszeja.
\newblock {\em {Thermodynamic Formalism for Generalized Countable {M}arkov Shifts}}.
\newblock PhD thesis, Instituto de Matemática e Estatística, University of São Paulo, São Paulo, 2020.

\bibitem{Renault1980}
J.~Renault.
\newblock {\em A Groupoid Approach to C*-Algebras}, volume 793.
\newblock Springer-Verlag, 1980.

\bibitem{Renault1999}
J.~Renault.
\newblock Cuntz-like {Algebras}.
\newblock In {\em Operator {Theoretical} {Methods} ({Timisoara, 1998})}, pages 371--386. Theta {Foundation}, 2000.

\bibitem{Renault2008Cartan}
J.~Renault.
\newblock Cartan subalgebras in {C}*-algebras.
\newblock {\em Irish Math. Soc. Bull.}, 0061:29–63, 2008.

\bibitem{sarig:09}
O.~Sarig.
\newblock {\em {L}ecture {N}otes on {T}hermodynamic {F}ormalism for {T}opological {M}arkov {S}hifts}.
\newblock Penn State, 2009.

\bibitem{sarig1999thermodynamic}
O.~M. Sarig.
\newblock Thermodynamic formalism for countable {M}arkov shifts.
\newblock {\em Ergod. Theory Dyn. Syst.}, 19(6):1565--1593, 1999.

\bibitem{Sarig2001PT}
O.~M. Sarig.
\newblock Phase transitions for countable {M}arkov shifts.
\newblock {\em Commun. Math. Phys.}, 217(3):555–577, 2001.

\bibitem{Sarig2001_Null}
O.~M. Sarig.
\newblock Thermodynamic formalism for null recurrent potentials.
\newblock {\em Isr. J. Math.}, 121(1):285--311, 2001.

\bibitem{Sarig2015}
O.~M. Sarig.
\newblock Thermodynamic formalism for countable {M}arkov shifts.
\newblock In M.~Pollicott D.~Dolgopyat, Y.~Pesin and L.~Stoyanov, editors, {\em {P}roceedings of {S}ymposia in {P}ure {M}athematics - {H}yperbolic {D}ynamics, {F}luctuations and {L}arge {D}eviations}, volume~89, pages 81--117. {A}merican {M}athematical {S}ociety, 2015.

\bibitem{Shwartz2019}
O.~Shwartz.
\newblock Thermodynamic {F}ormalism for {T}ransient {P}otential {F}unctions.
\newblock {\em Commun. Math. Phys.}, 366(2):737--779, 2019.

\end{thebibliography}

\end{document}